\newtheorem{thm}{Theorem}[section]
\newtheorem{cor}[thm]{Corollary}
\newtheorem{lem}[thm]{Lemma}
\newtheorem{prop}[thm]{Proposition}
\newtheorem*{thm*}{Theorem}
\theoremstyle{definition}
\newtheorem{conv}[thm]{Convention}
\newtheorem{defn}[thm]{Definition}
\newtheorem{rem}[thm]{Remark}
\newtheorem{ques}[thm]{Question}
\newtheorem*{conj*}{Conjecture}
\newtheorem{exam}[thm]{Example}
\newtheorem*{claim*}{Claim}
\newtheorem*{ques*}{Question}
\theoremstyle{remark}
\newtheorem*{ac}{Acknowledgments}
\numberwithin{equation}{thm}
\def\A{\mathcal{A}}
\def\ann{\operatorname{Ann}}
\def\b{\operatorname{B}}
\def\Cdim{\operatorname{CI-dim}}
\def\cext{\mathrm{\widehat{Ext}}\mathrm{}}
\def\codepth{\operatorname{codepth}}
\def\Coker{\operatorname{Coker}}
\def\Cone{\operatorname{Cone}}
\def\ctor{\mathrm{\widehat{Tor}}\mathrm{}}
\def\cx{\operatorname{cx}}
\def\depth{\operatorname{depth}}
\def\E{\operatorname{E}}
\def\edim{\operatorname{edim}}
\def\Ext{\operatorname{Ext}}
\def\finpql{\operatorname{finqpl}}
\def\ge{\geqslant}
\def\Gdim{\operatorname{G-dim}}
\def\grade{\operatorname{grade}}
\def\H{\operatorname{H}}
\def\height{\operatorname{ht}}
\def\hinf{\operatorname{hinf}}
\def\Hom{\operatorname{Hom}}
\def\hsup{\operatorname{hsup}}
\def\Im{\operatorname{Im}}
\def\K{\operatorname{K}}
\def\Ker{\operatorname{Ker}}
\def\le{\leqslant}
\def\len{\operatorname{length}}
\def\m{\mathfrak{m}}
\def\Min{\operatorname{Min}}
\def\Mod{\operatorname{Mod}}
\def\mod{\operatorname{mod}}
\def\N{\mathbb{N}}
\def\p{\mathfrak{p}}
\def\pd{\operatorname{pd}}
\def\qpd{\operatorname{qpd}}
\def\qpl{\operatorname{qpl}}
\def\r{\operatorname{r}}
\def\soc{\operatorname{Soc}}
\def\syz{\mathrm{\Omega}}
\def\Tor{\operatorname{Tor}}
\def\tr{\operatorname{Tr}}
\def\xx{\boldsymbol{x}}
\def\Z{\mathbb{Z}}
\def\z{\operatorname{Z}}
\begin{document}
\title{Quasi-projective dimension}
\author{Mohsen Gheibi}
\address[M. Gheibi]{Department of Mathematics, University of Texas at Arlington, 411 S. Nedderman Drive, Pickard Hall 445, Arlington, TX 76019, USA }
\email{mohsen.gheibi@uta.edu}
\urladdr{https://mentis.uta.edu/explore/profile/mohsen-gheibi}
\author{David A. Jorgensen}
\address[D. A. Jorgensen]{Department of Mathematics, University of Texas at Arlington, 411 S. Nedderman Drive, Pickard Hall 429, Arlington, TX 76019, USA }
\email{djorgens@uta.edu}
\urladdr{http://www.uta.edu/faculty/djorgens/}
\author{Ryo Takahashi}
\address[R. Takahashi]{Graduate School of Mathematics, Nagoya University, Furocho, Chikusaku, Nagoya 464-8602, Japan}
\email{takahashi@math.nagoya-u.ac.jp}
\urladdr{http://www.math.nagoya-u.ac.jp/~takahashi/}
\subjclass[2010]{13D05, 13D07, 13H10}
\keywords{Auslander--Buchsbaum formula, complete intersection, depth formula, quasi-projective dimension/resolution, vanishing of Tor/Ext.}
\thanks{Ryo Takahashi was partly supported by JSPS Grant-in-Aid for Scientific Research 16K05098, 19K03443 and JSPS Fund for the Promotion of Joint International Research 16KK0099}
\dedicatory{Dedicated to Professors Roger and Sylvia Wiegand on the occasion of their 150th birthday}
\begin{abstract}
In this paper, we introduce a new homological invariant called quasi-projective dimension, which is a generalization of projective dimension.
We discuss various properties of quasi-projective dimension.
Among other things, we prove the following.
(1) Over a quotient of a regular local ring by a regular sequence, every finitely generated module has finite quasi-projective dimension.
(2) The Auslander--Buchsbaum formula and the depth formula for modules of finite projective dimension remain valid for modules of finite quasi-projective dimension.
(3) Several results on vanishing of Tor and Ext hold for modules of finite quasi-projective dimension.
\end{abstract}
\maketitle
\tableofcontents
\section{Introduction}

Homological invariants such as projective dimension, injective dimension, and flat dimension are central themes in ring theory and homological algebra. They not only give information about modules, but they also provide a means for classifying associative rings. For example, a commutative Noetherian local ring $R$ is regular if and only if every finitely generated $R$-module $M$ has finite projective dimension, that is, $M$ can be approximated in finitely many steps by projective modules.

To study modules over non-regular rings, several homological invariants that generalize projective dimension
($\pd$) have been defined: Auslander and Bridger \cite{AB} defined and studied Gorenstein dimension ($\Gdim$). Next, Avramov \cite{Av2} defined and studied virtual projective dimension, and later,  Avramov, Gasharov and Peeva \cite{AGP} developed it to complete intersection dimension ($\Cdim$). The main point of these homological invariants  is that one extends the class of modules used to resolve; each dimension is equal to the minimal length of resolutions by modules the dimensions of which are zero.

The goal of this paper is to introduce a new homological invariant which also generalizes projective dimension, and study its properties. However, unlike the invariants $\Gdim$ and $\Cdim$, our invariant is based on extending the notion of what is a resolution. We say that a module $M$ over an associative ring $R$ has {\em finite quasi-projective dimension} if there exists a finite complex of projective modules whose homologies are isomorphic to finite direct sums of copies of $M$.  We use such complexes to define the new invariant {\em quasi-projective dimension}, $\qpd_RM$, for $R$-modules $M$ (see Definition \ref{12}). A remarkable distinction between quasi-projective dimension and the invariants $\pd$, $\Gdim$ and $\Cdim$ is that the residue field of a commutative local ring always has finite quasi-projective dimension. The finiteness of these other invariants of the residue field, in contrast, force the ring to be regular, Gorenstein, and a complete intersection, respectively.

Just as for the invariants $\pd$, $\Gdim$ and $\Cdim$, our invariant satisfies a version of the celebrated {\em Auslander--Buchsbaum formula}.

\begin{thm} For a finitely generated module $M$ over a commutative Noetherian local ring $R$, if
$\qpd_RM<\infty$, then $$\qpd_RM=\depth R-\depth_RM.$$
\end{thm}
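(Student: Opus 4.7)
The plan is to pick a minimal quasi-projective resolution of $M$ realizing $\qpd_R M$, compute its derived projective dimension and its derived depth viewed as a complex, and then apply the Auslander--Buchsbaum formula for perfect complexes (Foxby--Iyengar). Let $F_\bullet$ be such a resolution, placed in homological degrees $[0, N]$, with $H_i(F_\bullet) \cong M^{a_i}$ and $s = \sup\{i : a_i \ne 0\}$; the quantity $N - s$ should coincide with $\qpd_R M$ under the definition of the invariant.

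Since $F_\bullet$ is a bounded complex of finitely generated free modules, $k \otimes_R F_\bullet$ already computes the derived tensor product, and minimality of $F_\bullet$ forces the induced differentials on $k \otimes_R F_\bullet$ to vanish; hence $\Tor^R_i(k, F_\bullet) = k^{\rank F_i}$, giving $\pd_R F_\bullet = N$ in the derived sense. For the derived depth, I would invoke the convergent spectral sequence
\[
E_2^{p,q} = \Ext^p_R(k, H_{-q}(F_\bullet)) \Longrightarrow \Ext^{p+q}_R(k, F_\bullet).
\]
Its $E_2$-page is supported on $\{p \ge \depth_R M,\ q \in [-s, 0]\}$, because each nonzero homology of $F_\bullet$ is a direct sum of copies of $M$. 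The corner term $E_2^{\depth_R M,\, -s} = \Ext^{\depth_R M}_R(k, M^{a_s})$ is nonzero since $a_s \ne 0$, and no higher differential can touch it: outgoing differentials push $q$ below $-s$, and incoming ones would require a source with $p < \depth_R M$; both lie outside the support. Hence the corner survives to $E_\infty$, forcing $\depth_R F_\bullet = \depth_R M - s$.

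Finally, the Auslander--Buchsbaum formula for perfect complexes gives $\pd_R F_\bullet + \depth_R F_\bullet = \depth R$, and substituting the two computations yields $N + (\depth_R M - s) = \depth R$, i.e.\ $\qpd_R M = N - s = \depth R - \depth_R M$. The principal technical obstacle is the spectral-sequence corner-survival argument needed to upgrade the easy inequality $\depth_R F_\bullet \ge \depth_R M - s$ into equality; the critical inputs there are the minimality of $F_\bullet$ (to get $\pd_R F_\bullet = N$ rather than something smaller) and the choice $s = \sup\{i : a_i \ne 0\}$, ensuring that the corner term does not vanish.
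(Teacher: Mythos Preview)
Your argument is correct and constitutes a genuinely different route from the paper's. The paper truncates the minimal quasi-projective resolution $F$ at $h=\hsup F$, sets $C=\Coker\partial_{h+1}$, observes that the tail $0\to F_s\to\cdots\to F_h\to C\to 0$ is a minimal free resolution so that $\pd_R C=s-h$, applies the \emph{classical} Auslander--Buchsbaum formula to the module $C$, and then proves $\depth C=\depth M$ by a hands-on argument: choose a maximal sequence $\boldsymbol{x}$ regular on $R$ and $M$ (which exists since Lemma~4.3 gives $\depth M\le\depth R$), verify via Koszul homology that $\boldsymbol{x}$ is also $C$-regular, and use the injection $(M/\boldsymbol{x}M)^{\oplus a}\hookrightarrow C/\boldsymbol{x}C$ to force $\depth(C/\boldsymbol{x}C)=0$.

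Your approach trades this for the derived-category Auslander--Buchsbaum formula of Foxby--Iyengar applied to $F_\bullet$ itself, together with the corner-survival spectral-sequence argument to compute $\depth_R F_\bullet$. This is cleaner and avoids the auxiliary module $C$ and the preliminary inequality $\depth M\le\depth R$ entirely (you recover that inequality a posteriori from $\qpd_R M\ge 0$). The paper's approach, on the other hand, is more elementary---it uses nothing beyond the module-level formula---and the module $C$ it introduces is not wasted: the same construction and the identity $\depth C=\depth M$ are reused verbatim in the proof of the depth formula (Theorem~4.8).
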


Thus all invariants, $\pd$, $\Gdim$, $\Cdim$, and $\qpd$ agree whenever finite.
We prove Theorem 1.1 in Section 4, as Theorem \ref{AB}.

A common theme of the results of this paper is that modules over a commutative Noetherian local ring with finite quasi-projective dimension behave homologically like modules over a complete intersection, or, more generally, modules of finite complete intersection dimension.  This is not surprising in view of the fact that if $R$ is the quotient of a regular local ring by a regular sequence, then every $R$-module has finite quasi-projective dimension (see Corollary \ref{36} below). For example, modules of finite quasi-projective dimension also satisfy {\em Auslander's depth formula} \cite[Theorem 1.2]{A}.

\begin{thm} Suppose $R$ is a commutative Noetherian local ring, $M$ and $N$ are finitely generated $R$-modules such that $\qpd_RM<\infty$ and $\Tor^R_i(M,N)=0$ for all $i\geq 1$. Then $$\depth_RM+\depth_RN=\depth R+\depth_RM\otimes_RN$$
\end{thm}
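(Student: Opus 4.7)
The plan is to pass from $M$ to a bounded complex $F$ of finitely generated free $R$-modules witnessing $\qpd_R M < \infty$, use the hypothesis $\Tor_i^R(M, N) = 0$ to control the homology of $F \otimes_R N$, and extract the depth formula by comparing two computations of $\depth_R(F \otimes_R N)$.

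By the definition of finite quasi-projective dimension, pick $F$ with $H_i(F) \cong M^{a_i}$ for every $i$, where the $a_i \in \N$ are zero for all but finitely many $i$. Since $F$ is a bounded complex of flat modules, the hyper-Tor spectral sequence
\[
E^2_{p,q} = \Tor_p^R(H_q(F), N) \Longrightarrow H_{p+q}(F \otimes_R N)
\]
is available. Because $H_q(F) \cong M^{a_q}$ and $\Tor_p^R(M, N) = 0$ for $p \ge 1$ by hypothesis, the $E^2$-page is supported on the row $p = 0$ alone, so the spectral sequence collapses and yields $H_i(F \otimes_R N) \cong (M \otimes_R N)^{a_i}$ for every $i$.

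The complex $F \otimes_R N$ thus has two compatible descriptions: each of its terms is a finite direct sum of copies of $N$, and all of its nonzero homologies are finite direct sums of copies of the single module $M \otimes_R N$, concentrated in the same degrees where $F$ has nonzero homology. Applying a depth formula for bounded complexes of this shape (the technical backbone of Theorem 1.1) in two ways produces two expressions for $\depth_R(F \otimes_R N)$: one in terms of $\depth_R N$ and a length parameter of $F$, the other in terms of $\depth_R(M \otimes_R N)$ and the homological spread of $F$. The same analysis applied to $F$ itself, together with Theorem 1.1, identifies the common numerical parameter as $\qpd_R M = \depth R - \depth_R M$, and combining the resulting identities yields
\[
\depth_R M + \depth_R N = \depth R + \depth_R(M \otimes_R N).
\]

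The main technical obstacle is that one needs an \emph{equality}, not merely an inequality, for the depth of a bounded complex in terms of the depth data of its terms and of its homology. I would establish this by induction on the homological spread of $F$: when only one $a_i$ is nonzero, $F$ is, up to shift, a free resolution of $M^{a_i}$ and the classical Auslander--Buchsbaum formula applies; otherwise a good truncation produces a short exact sequence of complexes that, after tensoring with $N$, remains exact thanks to the Tor-vanishing hypothesis, and the long exact sequences of homology together with the depth lemma propagate the equality to the complex of larger spread.
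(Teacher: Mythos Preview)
Your strategy---compute the derived depth of $F\otimes_RN$ two ways, once through the free terms and once through the homology---is viable and genuinely different from the paper's, but the paragraph you flag as ``the main technical obstacle'' is where the proof actually lives, and your sketch there does not close. You never specify the ``good truncation''; neither a brutal nor a smart truncation of $F$ produces complexes whose homologies are again direct sums of copies of $M$, so there is nothing for the induction hypothesis to apply to, and the depth lemma only gives inequalities, not the equality you need to propagate. (Also, the base case requires Auslander's \emph{depth formula} for modules of finite projective dimension, not the Auslander--Buchsbaum formula.) A clean fix is to drop the induction: from $\mathbf R\Hom_R(k,F\otimes_RN)\simeq(F\otimes_Rk)\otimes_k\mathbf R\Hom_R(k,N)$ one reads off $\depth_R(F\otimes_RN)=\depth_RN-\sup(F\otimes_Rk)$ directly, and in the spectral sequence $E_2^{p,q}=\Ext_R^p(k,\H_{-q}(F\otimes_RN))\Rightarrow\Ext_R^{p+q}(k,F\otimes_RN)$ every nonzero column has the same lowest nonvanishing row $p=\depth(M\otimes_RN)$, so the corner term at $q=-\hsup F$ survives to $E_\infty$ and yields $\depth_R(F\otimes_RN)=\depth(M\otimes_RN)-\hsup F$.

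The paper avoids depths of complexes entirely. From a minimal quasi-projective resolution $F$ with $h=\hsup F$ it sets $C=\Coker\partial_{h+1}$, a module of finite projective dimension, chases the short exact sequences of boundaries, cycles, and homologies of $F$ to show $\Tor_{>0}^R(C,N)=0$, and then applies Auslander's classical depth formula to the pair $(C,N)$. The equality $\depth C=\depth M$ is immediate from Theorem~1.1 together with the usual Auslander--Buchsbaum formula for $C$. The remaining step, $\depth(C\otimes_RN)=\depth(M\otimes_RN)$, is handled by an induction on $\depth N$, using the embedding $(M\otimes_RN)^{\oplus a_h}\hookrightarrow C\otimes_RN$ to detect depth zero. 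Your route is more conceptual once the derived-depth machinery is in hand; the paper's stays entirely at the level of modules and is self-contained.
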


Huneke and Wiegand \cite{HW} established the depth formula for Tor-independent modules over complete intersection local rings. Later, Araya and Yoshino \cite{AY} showed that the depth formula holds for finitely generated Tor-independent modules $M$ and $N$ over an arbitrary local ring provided one of $M$ or $N$ has finite complete intersection dimension. We prove Theorem 1.2 in Section 4 as Theorem \ref{depthformula}.

Symmetry in vanishing of Ext first was proven by Avramov and Buchweitz \cite{AB2} for finitely generated modules over complete intersections. Their proof relied on their development of support varieties for pairs of finitely generated modules. Later, Huneke and Jorgensen \cite{HJ} introduced a class of Gorenstein local rings, namely, {\em AB rings}, and generalized symmetry in vanishing of Ext over AB rings. Every local complete intersection ring is an AB ring, but not conversely, and there are examples of Gorenstein rings which are not AB rings. In Section 6, we prove the following symmetry in vanishing of Ext result over an arbitrary Gorenstein ring (see Theorem \ref{24}).

\begin{thm} Let $R$ be a Gorenstein ring, and $M$ and $N$ be finitely generated $R$-modules. Assume that
$\qpd_RM<\infty$. Then
$$
\Ext^i_R(M,N)=0 \text{ for all $i\gg 0$ if and only if }  \Ext^i_R(N,M)=0 \text{ for all $i\gg 0$}
$$
\end{thm}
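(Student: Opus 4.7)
The plan is to combine a derived-duality argument with two hypercohomology spectral sequences. By the symmetry of the conclusion, it suffices to assume $\Ext^i_R(M,N) = 0$ for $i \gg 0$ and deduce $\Ext^i_R(N,M) = 0$ for $i \gg 0$. Fix a finite complex $F_\bullet$ of finitely generated projective $R$-modules witnessing $\qpd_R M < \infty$, with $\H_j(F_\bullet) \cong M^{a_j}$ for every $j$ and only finitely many $a_j$ nonzero. The hypercohomology spectral sequence of $\Hom_R(F_\bullet, N)$,
$$E_2^{p,q} = \Ext^p_R(M, N)^{a_{-q}} \Longrightarrow \H^{p+q}\Hom_R(F_\bullet, N),$$
has $q$-range bounded by the support of $\H_\ast(F_\bullet)$ and, by hypothesis, bounded $p$-range, so $\Hom_R(F_\bullet, N)$ has finitely generated cohomology concentrated in a controlled range.

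The key transfer step exploits that $F_\bullet$ is a perfect complex and therefore reflexive in $\D(R)$. Writing $F_\bullet^\vee := \Hom_R(F_\bullet, R)$, which is again a finite complex of finitely generated projectives, biduality $F_\bullet \cong F_\bullet^{\vee\vee}$ and tensor-hom adjunction combine to give a natural derived isomorphism
$$\Hom_R(N, F_\bullet) \cong \Hom_R\bigl(\Hom_R(F_\bullet, N), R\bigr) \quad \text{in } \D(R).$$
Over a Gorenstein ring one has $\Ext^i_R(L, R) = 0$ for $i > \dim R$ for every finitely generated $L$, so the derived functor $\Hom_R(-, R)$ preserves bounded finitely generated cohomology. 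Hence the right-hand side has bounded cohomology, and so does $\Hom_R(N, F_\bullet)$. The companion spectral sequence
$$E_2^{p,q} = \Ext^p_R(N, M)^{a_{-q}} \Longrightarrow \H^{p+q}\Hom_R(N, F_\bullet)$$
then has bounded $q$-range and bounded abutment, forcing $E_\infty^{p,q} = 0$ for all $p \gg 0$ and $q$ in the support of $\H_\ast(F_\bullet)$.

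The main obstacle is translating vanishing of $E_\infty^{p,q}$ into vanishing of $E_2^{p,q} = \Ext^p_R(N,M)^{a_{-q}}$ for $p \gg 0$. When $\H_\ast(F_\bullet)$ is concentrated in a single degree the spectral sequence has no higher differentials, degenerates at $E_2$, and the conclusion is immediate; this case already recovers the classical result for $\pd_R M < \infty$. In general the differentials $d_r \colon E_r^{p,q} \to E_r^{p+r, q+1-r}$ link $\Ext^p(N,M)$ at arithmetically separated degrees, and a dimension count only yields a linear recursion among the $\Ext^p(N,M)$ rather than outright vanishing; over a complete intersection one indeed sees eventually periodic $\Ext$ with bounded spectral-sequence abutment, illustrating that the hypothesis genuinely must be used. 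I would overcome this either by appealing to the rigidity-style vanishing theorems for modules of finite $\qpd$ developed earlier in the paper (see item (3) of the introduction), which should upgrade the vanishing of $E_\infty^{p,q}$ to vanishing of $\Ext^p_R(N,M)$ in blocks, or by an induction on the cardinality of $\{j : a_j \ne 0\}$: split $F_\bullet$ into shorter quasi-projective pieces via the canonical truncation triangles in $\D^b(R)$ and then invoke the octahedral axiom to propagate vanishing back to $M$.
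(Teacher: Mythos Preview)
Your derived-duality step is correct as a formal isomorphism, but it does not transmit any information from the hypothesis. Observe that for a perfect complex $F$ and a finitely generated module $N$, the complex $\Hom_R(F,N)$ is literally bounded, so $\mathbf{R}\Hom_R(F,N)\in\D^b_{\mathrm{fg}}(R)$ unconditionally. Likewise $\mathbf{R}\Hom_R(N,F)$ always lies in $\D^b_{\mathrm{fg}}(R)$: using $F\simeq F^{\vee\vee}$ one has $\mathbf{R}\Hom_R(N,F)\simeq\mathbf{R}\Hom_R(N,R)\otimes^{\mathbf L}_R F$, and over a Gorenstein ring $\mathbf{R}\Hom_R(N,R)$ is bounded; alternatively, the hyper-Ext spectral sequence and the bound $\sup\Hom(P,F)\le\sup F$ already force boundedness with no hypothesis on $R$ at all. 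Thus the statement you arrive at before the second spectral sequence, namely that $\mathbf{R}\Hom_R(N,F)$ has bounded cohomology, is true whether or not $\Ext^{\gg0}_R(M,N)=0$. You have in effect discarded the hypothesis, and the example you yourself mention (eventually periodic Ext over a complete intersection, e.g.\ $R=k[x]/(x^2)$, $M=N=k$) shows that your remaining spectral sequence cannot possibly yield the conclusion from this data alone.

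Neither proposed fix repairs this. The rigidity theorems in the paper (for $\Ext_R(N,M)$ with $\qpd_RM<\infty$ and $\Gdim_RN<\infty$) require a block of \emph{consecutive} vanishing $\Ext^i_R(N,M)$ as input, which is precisely what you are trying to produce; bounded abutment does not supply it. Truncation fails because $\tau_{\ge j}F$ and $\tau_{<j}F$ are no longer perfect, so the argument does not recurse. The paper's proof takes a different route: it reduces to $M,N$ maximal Cohen--Macaulay, passes to Tate cohomology where the rigidity theorems can be combined with shift-by-syzygy to turn one-sided vanishing of $\cext_R(M,N)$ into vanishing for all $i\in\Z$, uses the duality $\cext^{-i-1}_R(M,N)\cong\ctor^R_i(M^*,N)$ together with $\qpd_RM^*<\infty$, and then invokes the Huneke--Jorgensen equivalence between $\Tor^R_{\gg0}(M^*,N)=0$ and $\Ext^{\gg0}_R(N,M)=0$. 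The hypothesis is used essentially at the rigidity step, not through a duality of bounded complexes.
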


We also prove in Section 6 several rigidity of Tor and Ext results for modules of finite quasi-projective dimension, which are akin to rigidity results for modules over a complete intersection.
Another consequence of the theory of support varieties over a complete intersection $R$ is that the long-standing
{\em Auslander--Reiten Conjecture} holds for $R$: {\em for every finitely generated $R$-module $M$ we have
$\Ext^i_R(M,M)=0$ for all $i\gg 0$ if and only if $\pd_RM<\infty$}. This result was originally proven by Auslander, Ding and Solberg using different techniques; see \cite{ADS}. The Auslander--Reiten conjecture also holds over AB rings and some classes of Artinian local rings; see \cite{CH} and \cite{HSV}, but even over Gorenstein rings, it is open. We show in Theorem \ref{26} in Section 6 that modules of finite quasi-projective dimension satisfy the condition of the Auslander--Reiten conjecture.

\begin{thm} Let  $M$ be a finitely generated $R$-module such that $\qpd_RM<\infty$. Then $\Ext^i_R(M,M)=0$ for all $i> 0$ if and only if $M$ is projective.
\end{thm}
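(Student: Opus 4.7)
The ``if'' direction is immediate. For the ``only if'' direction, since projectivity of $M$ is local, I may localize at a maximal ideal and thereby assume $(R, \m, k)$ is local, and I plan to induct on $\depth R$ using the Auslander--Buchsbaum formula $\qpd_R M = \depth R - \depth_R M$ (Theorem~\ref{AB}). In the base case $\depth R = 0$, the formula forces $\qpd_R M = -\depth_R M \le 0$, so $\qpd_R M = 0$ and $M$ is projective straight from the definition.

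For the inductive step, assume $\depth R \ge 1$ and the result for strictly smaller depth. If $\qpd_R M = 0$ there is nothing to prove, so set $n = \qpd_R M \ge 1$ and pick a minimal quasi-projective resolution $F_\bullet$ of $M$ of length $n$, with $H_i(F_\bullet) \cong M^{a_i}$. The hypercohomology spectral sequence
\[
E_2^{p,q} = \Ext^p_R(H_q(F_\bullet), M) = \Ext^p_R(M, M)^{a_q} \Longrightarrow H^{p+q}(\Hom_R(F_\bullet, M))
\]
collapses at $E_2$ to the row $p = 0$ by the Ext-vanishing hypothesis, so $H^n(\Hom_R(F_\bullet, M)) \cong \Hom_R(M, M)^{a_n}$. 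On the other hand, minimality of $F_\bullet$ forces the map $d_n^*\colon \Hom_R(F_{n-1}, M) \to \Hom_R(F_n, M)$ to have image in $\m\Hom_R(F_n, M)$, so this same cohomology surjects onto $(M/\m M)^{\rank F_n}$, which is nonzero by Nakayama since $F_n \ne 0$ and $M \ne 0$. Hence $a_n \ge 1$, and $M$ embeds as a direct summand of $H_n(F_\bullet) = \ker d_n$ into the free module $F_n$.

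Since $M$ embeds in a free module it is torsion-free, so any $R$-regular element $x \in \m$ (which exists as $\depth R \ge 1$) is automatically $M$-regular. Setting $R' = R/xR$ and $M' = M/xM$, the complex $F_\bullet \otimes_R R'$ realizes $\qpd_{R'} M' \le n$ (the $\Tor_1$ obstructions vanish since $x$ is regular on every $M^{a_i}$), and Rees's theorem combined with the long exact sequence coming from $0 \to M \xrightarrow{x} M \to M' \to 0$ gives $\Ext^i_{R'}(M', M') = 0$ for all $i > 0$. Applying the inductive hypothesis to $(R', M')$, where $\depth R' = \depth R - 1$, shows $M'$ is $R'$-projective and hence $R'$-free since $R'$ is local. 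The standard lifting principle --- $x$ being $M$-regular together with $M/xM$ being $R/xR$-free imply $M$ is $R$-free --- then completes the induction.

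The principal obstacle will be deriving the positivity $a_n \ge 1$: the spectral sequence collapse by itself only identifies $H^n(\Hom_R(F_\bullet, M))$ with $\Hom_R(M, M)^{a_n}$, leaving $a_n = 0$ a priori possible, so minimality of $F_\bullet$ must be brought in via Nakayama to force the nonvanishing. After that hurdle, the induction on $\depth R$, powered by Auslander--Buchsbaum and Rees's theorem, runs cleanly to the conclusion.
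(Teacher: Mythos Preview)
Your base case is incorrect: $\qpd_R M = 0$ does \emph{not} imply that $M$ is projective. By Definition~\ref{12}, $\qpd_R M = 0$ only says there is a finite quasi-projective resolution $P$ with $\sup P = \hsup P$; it says nothing about the length of $P$ or the vanishing of the lower homologies. For instance, over $R = k[x]/(x^2)$ (depth zero) the residue field $k$ satisfies $\qpd_R k = 0$ via the Koszul complex $0 \to R \xrightarrow{x} R \to 0$, yet $k$ is not free; more generally, any nonzero periodic module has $\qpd = 0$ by Proposition~\ref{18}(2). The same misconception recurs in your inductive step (``if $\qpd_R M = 0$ there is nothing to prove''), and you further conflate $\qpd_R M$ with the \emph{length} of a resolution: these are different invariants (compare $\qpl$ in Definition~\ref{qpld}), and one cannot in general choose a quasi-projective resolution of length equal to $\qpd_R M$.

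Read charitably, your spectral-sequence-plus-minimality argument does establish that for any minimal finite quasi-projective resolution $F$ one has $a_{\sup F} \neq 0$, hence $\hsup F = \sup F$ and $\qpd_R M = 0$; by Auslander--Buchsbaum this gives $\depth M = \depth R$, so when $\depth R \ge 1$ your reduction modulo a common regular element is legitimate and the induction does descend. But it bottoms out at the depth-zero case --- a local ring $R$ with $\depth R = 0$, a module $M$ with $\qpd_R M = 0$ and $\Ext^{>0}_R(M,M) = 0$ --- which is essentially the full strength of the theorem, and for which you offer no argument. The paper's proof (Theorem~\ref{26}) avoids this trap by working with $n = \qpl_R M$ rather than $\qpd_R M$: the vanishing of $\Ext^i_R(M,M)$ for $2 \le i \le n+1$ forces $\Ext^1_R(B_{i-1}, H_i) = 0$ for every $i$, which lets the canonical surjections $Z_i \to H_i$ extend to maps $P_i \to H_i$ assembling into a quasi-isomorphism $P \to \H(P) = \bigoplus_i M^{\oplus a_i}[i]$; since $P$ is perfect this forces $\pd_R M < \infty$, and then the additional vanishing of $\Ext^1_R(M,M)$ yields projectivity.
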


In Section 4 we investigate whether over a fixed ring there is a bound on the lengths of the minimal finite complexes of projectives realizing the modules of finite quasi-projective dimension.

In Section 7, we focus on ideals $I$ of a commutative Noetherian ring $R$ whose Koszul homologies are free
$R/I$-modules. These ideals have finite quasi-projective dimension. A maximal ideal of $R$ is such an example, however, there are many other examples.  We call these ideals {\em FKH ideals}. It turns out that if $I$ is an FKH ideal then $\qpd_RR/I=\grade_RI$.  Among other results, we prove that over a Gorenstein ring $R$, if $J$ is linked to an FKH ideal $I$, then $\qpd_RR/J = \grade_RJ$. This result is an analogue of a theorem of Peskine and Szpiro \cite{PS} for linkage of perfect ideals over Gorenstein rings.

\begin{ac}
The authors thank  Srikanth Iyengar for pointing out \cite{BI} and Josh Pollitz for very useful discussions.
Part of this work has been done during the visit of Ryo Takahashi to the University of Texas at Arlington in June, 2019. He is grateful for their kind hospitality. Finally, the authors thank the referee for his/her valuable suggestions and comments that improved this paper. 
\end{ac}

\section{Preliminaries}

\begin{conv}\label{co}
Throughout this paper, unless otherwise stated, all rings are commutative Noetherian rings with identity.
Let $R$ be a ring.
We denote by $\Mod R$ the category of $R$-modules, and by $\mod R$ the full subcategory consisting of finitely generated $R$-modules.
The set of nonnegative integers is denoted by $\N$.
When $R$ is local, $\edim R$ stands for the embedding dimension of $R$.
We put $(-)^*=\Hom_R(-,R)$.
For $n\in\N\cup\{\pm\infty\}$ we say that $n$ is finite and write $n<\infty$ if $n\in\N\cup\{-\infty\}$.
\end{conv}

\begin{defn}
Let $\A$ be an abelian category.
Let $X= (\cdots\xrightarrow{\partial_{i+2}}X_{i+1}\xrightarrow{\partial_{i+1}}X_i \xrightarrow{\partial_{i}} X_{i-1} \xrightarrow{\partial_{i-1}} \cdots)$ be a complex of objects of $\A$.
For each integer $i$, we define the $i$th cycle $\z_i(X)=\Ker\partial_i$, the $i$th boundary $\b_i(X)=\Im\partial_{i+1}$ and the $i$th homology $\H_i(X)=\z_i(X)/\b_i(X)$.
The {\em supremum}, {\em infimum}, {\em homological supremum} and {\em homological infimum} of $X$ are defined by
$$
\begin{cases}
\sup X=\sup\{i\in\Z\mid X_i\ne0\},\\
\inf X=\inf\{i\in\Z\mid X_i\ne0\},
\end{cases}
\qquad
\begin{cases}
\hsup X=\sup\{i\in\Z\mid\H_i(X)\ne0\},\\
\hinf X=\inf\{i\in\Z\mid\H_i(X)\ne0\}.
\end{cases}
$$
We define the \emph{length} of $X$ to be $\len X=\sup X-\inf X$.
We say that $X$ has \emph{finite length}, or is \emph{bounded}, if $\len X<\infty$. We say that $X$ is \emph{bounded below} if $\inf X>-\infty$. Note that if $X$ satisfies $X_i=0$ for all 
$i\in\mathbb Z$, then $\sup X =-\infty$, $\inf X=\infty$, and by convention we set $\len X=-\infty$.

For an integer $j$, the complex $X[j]$ is defined by ${X[j]}_i = X_{i-j}$ and $\partial^{X[j]}_i=(-1)^j\partial^X_{i-j}$ for all $i$.
\end{defn}

\begin{defn}
Let $(R,\m,k)$ be a local ring. A complex $(F,\partial)$ of free $R$-modules of finite rank is called {\em minimal} if $\partial_i\otimes_Rk=0$ for all $i$.
\end{defn}

We recall the definition of projective dimension for objects in an abelian category.

\begin{defn}
\begin{enumerate}[(1)]
\item
Let $\A$ be an abelian category with enough projective objects.
Let $M\in\A$.
\begin{enumerate}[(a)]
\item
A complex $P=(\cdots\to P_2\to P_1\to P_0\to0)$ of projective objects of $\A$ is called a {\em projective resolution} of $M$ if $\H_i(P)=0$ for all $i>0$ and $\H_0(P)=M$.
\item
For each integer $i>0$ we define an $i$th {\em syzygy} of $M$ by $\syz_\A^iM=\b_i(P)$, where $P$ is a projective resolution of $M$.
We simply write $\syz_\A M=\syz_\A^1M$.
We put $\syz_\A^0M=M$ and call it a $0$th syzygy of $M$.
For each $i\ge0$ the object $\syz_\A^iM$ is uniquely determined up to projective summands.
\item
The {\em projective dimension} $\pd_\A M$ of $M$ is by definition the infimum of integers $n\ge0$ such that there exists a projective resolution $P$ of $M$ with $P_i=0$ for all $i>n$.
\end{enumerate}
\item
For an $R$-module $M$, a projective resolution of $M$, syzygies $\syz_R^iM$ of $M$, and the projective dimension $\pd_RM$ of $M$ are defined by setting $\A:=\Mod R$ in (1).
Suppose that $R$ is local and $M$ is finitely generated.
Then one can take a minimal free resolution $F$ of $M$, which is uniquely determined up to isomorphism of complexes.
Whenever we work in this setting, we define the syzygies of $M$ by using $F$, so that they are uniquely determined up to isomorphism.
\end{enumerate}
\end{defn}

We close this section with the following lemma which we will use frequently in the rest of the paper.

\begin{lem}\label{spseq}
Let $P$ be a bounded below complex of projective $R$-modules.
Then for an $R$-module $N$ there are convergent spectral sequences
\begin{align}
\label{sptor1}
\E^2_{p,q}&=\Tor^R_p(\H_q(P),N)\Longrightarrow \H_{p+q}(P\otimes_R N), \ \text{and}\\
\label{spext1}
\E^{p,q}_2&= \Ext^p_R(\H_q(P),N) \Longrightarrow \H^{p+q}(\Hom_R(P,N)).
\end{align}
\end{lem}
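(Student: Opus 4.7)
The plan is a standard double complex / hyperhomology argument. For each of the two spectral sequences, I will construct a bounded-below double complex whose totalization $T$ has homology equal to the claimed abutment, and arrange that the two spectral sequences of the double complex yield (i) the $E^2$ page appearing in the statement, and (ii) a spectral sequence that collapses to $\H(T)$.

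For the Tor spectral sequence \eqref{sptor1}, I choose a projective resolution $Q_\bullet\to N$ and form the double complex $X_{p,q}=P_q\otimes_R Q_p$. Since $P$ is bounded below and $Q$ lives in non-negative degrees, each anti-diagonal $p+q=n$ is finite, so the total complex $T$ is well-defined and the two standard spectral sequences of $X$ converge to $\H(T)$. Taking column differentials first, flatness of each $Q_p$ gives $\H_q(P\otimes_R Q_p)=\H_q(P)\otimes_R Q_p$, and the subsequent row differentials then yield $\Tor^R_p(\H_q(P),N)$ at $E^2$, producing the required $E^2$ page. Taking row differentials first, flatness of $P_q$ gives $\H_p(P_q\otimes_R Q_\bullet)=0$ for $p>0$ and $P_q\otimes_R N$ for $p=0$, so this spectral sequence collapses at $E^2$ and forces $\H_n(T)\cong\H_n(P\otimes_R N)$ for all $n$, identifying the abutment.

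For the Ext spectral sequence \eqref{spext1}, I take an injective resolution $N\to I^\bullet$ and form the cochain double complex $Y^{p,q}=\Hom_R(P_q,I^p)$. By an analogous analysis, taking $p$-direction differentials first, projectivity of $P_q$ makes $\Ext^p_R(P_q,N)$ vanish for $p>0$, so that spectral sequence collapses and identifies the total cohomology of $Y$ with $\H^n(\Hom_R(P,N))$. Taking $q$-direction differentials first, injectivity of $I^p$ makes $\Hom_R(-,I^p)$ exact, so $\H^q(\Hom_R(P,I^p))=\Hom_R(\H_q(P),I^p)$, whose $p$-th cohomology along the $I^\bullet$ direction is $\Ext^p_R(\H_q(P),N)$, giving the required $E_2$ page.

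There is no substantial obstacle here: the only technical point deserving attention is that the bounded-below hypothesis on $P$ is precisely what guarantees that the two double complexes have finite anti-diagonals, so that their totalizations are well-defined and both spectral sequences converge in the strong sense. Everything else is a routine application of the two-filtration machinery together with the exactness properties of tensoring with a flat module and $\Hom$-ing into an injective module.
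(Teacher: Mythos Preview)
Your proposal is correct and follows essentially the same approach as the paper: the paper also constructs the spectral sequences from the double complexes $P\otimes_R F$ (with $F$ a projective resolution of $N$) and $\Hom_R(P,I)$ (with $I$ an injective resolution of $N$), simply citing Rotman for the details you have spelled out. Your added observation that bounded-belowness of $P$ ensures finite anti-diagonals (hence convergence) is a welcome clarification the paper leaves implicit.
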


\begin{proof}
Since $P$ is a complex of projective $R$-modules, the first spectral sequence is derived from the double complex $P\otimes_RF$ where $F$ is a projective resolution of $N$, and the second one is derived from the double complex $\Hom_R(P,I)$ where $I$ is an injective resolution of $N$; see for example \cite[Theorem 11.34]{R}.
\end{proof}

\section{Definition and basic properties of quasi-projective dimension}

In this section we introduce our main objects of study, namely, modules of finite quasi-projective dimension, and then give some examples and basic properties.

\begin{defn}\label{12}
Let $\A$ be an abelian category with enough projective objects.
Let $M$ be an object of $\A$.
\begin{enumerate}[(1)]
\item
A {\em quasi-projective resolution} of $M$ in $\A$ is defined as a bounded below complex $P$ of projective objects of $\A$ such that for all $i\ge\inf P$ there exist non-negative integers 
$a_i$, not all zero, such that $\H_i(P)\cong M^{\oplus a_i}$.
We say that a quasi-projective resolution $P$ is {\em finite} if $P$ has finite length, equivalently, $\sup P<\infty$.
\item
We define the {\em quasi-projective dimension} of $M$ in $\A$ by
$$
\qpd_\A M=
\begin{cases}\inf\{\sup P-\hsup P\mid\text{$P$ is a finite quasi-projective resolution of $M$}\} & (\text{if $M\ne0$}),\\
-\infty & (\text{if $M=0$}).
\end{cases}
$$
\end{enumerate}
We simply set $\qpd_RM=\qpd_{\Mod R}M$ for an $R$-module $M$.
Note that $\qpd_\A M\in\N\cup\{\pm\infty\}$. One has that $\qpd_\A M=-\infty$ if and only if $M=0$, that $\qpd_\A M=\infty$ if and only if $M$ does not admit a finite quasi-projective resolution, and that $\qpd_\A M\in\N$ if and only if $M$ is nonzero and admits a finite quasi-projective resolution.
\end{defn}

\begin{rem}\label{8}
Let $\A$ be an abelian category with enough projective objects.
Let $M$ be an object of $\A$.
\begin{enumerate}[(1)]
\item Let $P$ be a quasi-projective resolution of $M\ne 0$. Assuming that 
$\H_i(P)\cong M^{\oplus a_i}$ for all $i\in\Z$, we note that $\hsup P=\sup\{i\mid a_i\ne 0\}$
and $\hinf P=\inf\{i\mid a_i\ne 0\}$.
\item
Every (finite) projective resolution of $M$ is a (finite) quasi-projective resolution of $M$.
In particular, the zero complex is a finite quasi-projective resolution of the zero module.
\item
If $M$ has finite projective dimension, then $M$ has finite quasi-projective dimension.
More precisely, there is an inequality $\qpd_\A M\le\pd_\A M$.
\item Let $M\ne0$, and $P$ be a quasi-projective resolution of $M$.
Then there exists a quasi-projective resolution $P'$ of $M$ with $\H_i(P')=\H_i(P)$ for all 
$i\in\Z$, and $\hinf P'=\inf P'$.
Indeed, set $u=\inf P$. If $\H_u(P)=0$, then $\partial_{u+1}$ is a split surjection, and 
$\Ker\partial_{u+1}$ is projective. The truncated complex 
$\cdots\xrightarrow{\partial_{u+3}}P_{u+2}\xrightarrow{\partial_{u+2}}\Ker\partial_{u+1}\to0$ is also a quasi-projective resolution of $M$ with the same homology as $P$. Repeating this process as necessary, 
we obtain a quasi-projective resolution $P'$ of $M$ with the desired property.
\end{enumerate}
\end{rem}

Let $\A$ be an abelian category, and let $\alpha:X\to Y$ be a morphism in the category of complexes of objects of $\A$.
Then we denote by $\Cone(\alpha)$ the mapping cone of $\alpha$.

The second, third and fourth statements in the following proposition will be refined for modules over local rings; see Corollary \ref{13}.

\begin{prop}\label{Syz}
Let $\A$ be an abelian category with enough projective objects.
\begin{enumerate}[\rm(1)]
\item
For an object $M\in\A$ and an integer $n>0$, one has $\qpd_\A(M^{\oplus n})=\qpd_\A M$.
\item
Let $M,N\in\A$.
Then $\qpd_\A(M\oplus N)\le\sup\{\qpd_\A M,\qpd_\A N\}$.
\item
For a nonzero object $M\in\A$ and a projective object $J\in\A$, one has $\qpd_\A(M\oplus J)\le\qpd_\A M$.
\item
For a nonzero object $M\in\A$, there is an inequality $\qpd_\A(\syz_\A M)\le\qpd_\A M$.
\end{enumerate}
\end{prop}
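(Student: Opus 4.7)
The plan is to prove each of (1)--(4) by explicit construction of a finite quasi-projective resolution of the target object, then computing $\sup$, $\hsup$, and homology directly. For (1), take a finite quasi-projective resolution $P$ of $M$ realizing $\qpd_\A M$, with $\H_i(P)\cong M^{\oplus a_i}$. Then $P^{\oplus n}$ has $\H_i(P^{\oplus n})\cong M^{\oplus na_i}\cong(M^{\oplus n})^{\oplus a_i}$ and the same $\sup$ and $\hsup$ as $P$, giving one direction; and any finite quasi-projective resolution of $M^{\oplus n}$ with exponents $b_i$ is automatically one of $M$ with exponents $nb_i$, giving the reverse.

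For (2), the main construction, assume both dimensions are finite, take quasi-projective resolutions $P$ of $M$ and $Q$ of $N$ realizing them with $\H_i(P)\cong M^{\oplus a_i}$ and $\H_j(Q)\cong N^{\oplus b_j}$, and shift so that $\hsup P=\hsup Q=0$. I propose the complex
\begin{equation*}
R=\bigoplus_{k\ge 0} P[-k]^{\oplus b_{-k}}\oplus\bigoplus_{k\ge 0} Q[-k]^{\oplus a_{-k}},
\end{equation*}
which is a finite direct sum of finite complexes of projectives since $a$ and $b$ have finite support. Using $\H_i(P[-k])\cong M^{\oplus a_{i+k}}$, the $M$-multiplicity in $\H_i(R)$ is $\sum_{k\ge 0}a_{i+k}b_{-k}$ and the $N$-multiplicity is $\sum_{k\ge 0}b_{i+k}a_{-k}$; the substitution $l=-k$ identifies both sums with the convolution $c_i=\sum_l a_{i-l}b_l$, so $\H_i(R)\cong(M\oplus N)^{\oplus c_i}$. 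Since $c_0=a_0b_0>0$, one has $\hsup R=0$ and $\sup R=\max\{\sup P,\sup Q\}=\sup\{\qpd_\A M,\qpd_\A N\}$, yielding the bound. Statement (3) follows as the special case $N=J$ with $Q$ the trivial quasi-projective resolution $J$ concentrated in degree $0$.

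For (4), I would use a mapping cone. Let $F_0\twoheadrightarrow M$ be a projective cover with kernel $\syz_\A M$, and form the complex $G$ with $G_i=F_0^{\oplus a_i}$ in degree $i$ and zero differentials. Projectivity of each $G_i$ lets me lift the surjection $F_0^{\oplus a_i}\twoheadrightarrow M^{\oplus a_i}=\H_i(P)$ through $\z_i(P)\subseteq P_i$ to $\alpha_i\colon G_i\to P_i$; the collection $\alpha=(\alpha_i)\colon G\to P$ is a chain map since $\alpha_i$ lands in $\z_i(P)$ and $\partial^G=0$. Since $\alpha_*$ on each homology is the surjection with kernel $(\syz_\A M)^{\oplus a_i}$, the long exact sequence of $\Cone(\alpha)$ yields $\H_i(\Cone(\alpha))\cong(\syz_\A M)^{\oplus a_{i-1}}$, and a direct calculation shows $\sup\Cone(\alpha)-\hsup\Cone(\alpha)=\max\{0,\qpd_\A M-1\}\le\qpd_\A M$.

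The main obstacle is (2): the naive $P\oplus Q$ has homologies $M^{\oplus a_i}\oplus N^{\oplus b_i}$, which rarely takes the form $(M\oplus N)^{\oplus c_i}$ required by the definition. The convolution construction above is what synchronizes the $M$- and $N$-multiplicities, forcing them to agree via the symmetry $a*b=b*a$.
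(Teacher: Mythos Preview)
Your argument is correct and follows the paper's proof essentially verbatim: the same convolution-of-shifts complex for (2), the specialization $N=J$ for (3), and the same mapping-cone construction for (4), with your computation $\sup\Cone(\alpha)-\hsup\Cone(\alpha)=\max\{0,\qpd_\A M-1\}$ being a slight sharpening of the paper's cruder bound. One terminological slip: in a general abelian category with enough projectives, projective \emph{covers} need not exist, so in (4) you should say ``an epimorphism $F_0\twoheadrightarrow M$ with $F_0$ projective'' rather than ``projective cover''; the argument is unaffected.
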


\begin{proof}
(1) If $P$ is a quasi-projective resolution of $M$, then $P^{\oplus n}$ is a quasi-projective resolution of $M^{\oplus n}$.
Conversely, if $P$ is a quasi-projective resolution of $M^{\oplus n}$, then it is also a quasi-projective resolution of $M$.
The assertion now follows.

(2) We may assume $M\ne0\ne N$ and $\qpd_\A M<\infty>\qpd_\A N$.
Let $P$ and $P'$ be finite quasi-projective resolutions of $M$ and $N$, respectively.
By assumption, $\H_i(P)\cong M^{\oplus a_i}$ and $\H_j(P')\cong N^{\oplus b_j}$ for some $a_i,b_j\ge0$ (and all but finitely many of the $a_i$ and $b_j$ are zero.) Then the complex
$$
F=\big{(}\oplus_{j\in\Z}P^{\oplus b_j}[j]\big{)}\oplus \big{(}\oplus_{i\in\Z}{P'}^{\oplus a_i}[i]\big{)}
$$
is a quasi-projective resolution for $M\oplus N$; in fact, 
$\H_k(F)=(M\oplus N)^{\oplus\sum_{i+j=k}a_ib_j}$ for each $k$. Note that 
$\sup F=\max\{\sup P+\hsup P',\sup P'+\hsup P\}$ and that 
$\hsup F=\hsup P+\hsup P'$.  Therefore we have
\[
\qpd_\A(M\oplus N)\le\sup F -\hsup F=\max\{\sup P-\hsup P,\sup P'-\hsup P'\}
\]
Choosing $P$ and $P'$ such that $\qpd_\A M=\sup P-\hsup P$ and $\qpd_\A N=\sup P'-\hsup P'$
we achieve the desired conclusion.

(3) The assertion follows from (2) by letting $N=J$.

(4) Putting $N=\syz_\A M$, we have an exact sequence $0\to N\to J\xrightarrow{\pi}M\to0$ in $\A$ with $J$ projective.
What we want to show is that $\qpd_\A N\le\qpd_\A M$.
We may assume $N\ne0$, and that $\qpd_\A M<\infty$.
Let $P$ be a finite quasi-projective resolution of $M$.
Then for each $i\in\Z$ we have $\H_i(P)\cong M^{\oplus a_i}$ with $a_i\ge0$ (and all but finitely many $a_i$ are zero.) Set $G_i=J^{\oplus a_i}$ and consider the complex
\[
G=(\cdots\to G_{i+1}\xrightarrow{0}G_i\xrightarrow{0}G_{i-1}\to\cdots)
\]
As $G_i$ is projective, the map $\pi^{\oplus a_i}:G_i\to\H_i(P)$ lifts to a map $G_i\to\z_i(P)$.
Composing this with the inclusion map $\z_i(P)\to P_i$, we get a map $\alpha_i:G_i\to P_i$, and obtain a chain map $\alpha:G\to P$. The short exact sequence of complexes $0\to P \to \Cone(\alpha) \to G[1] \to 0$ yields the long exact sequence of homology 
\[
\cdots \to \H_{i+1}(P) \to \H_{i+1}(\Cone(\alpha)) \to \H_i(G) \to \H_i(P) \to \cdots.
\]
By construction of $G$, this long exact sequence breaks into short exact sequences 
\[
0\to \H_{i+1}(\Cone(\alpha))\to G_i\xrightarrow{\pi^{\oplus a_i}}\H_{i}(P) \to 0
\]
and so $\H_{i+1}(\Cone(\alpha))\cong N^{\oplus a_i}$ for all $i\in\Z$.
Thus
\[
\Cone(\alpha)=(\cdots\to G_i\oplus P_{i+1}\to G_{i-1}\oplus P_i\to\cdots)
\]
is a quasi-projective resolution of $N$, and $\hsup \Cone(\alpha)=\hsup P+1$. We also see that
$\sup \Cone(\alpha)\le \sup P+1$.  Therefore we have
\[
\qpd_\A N\le\sup\Cone(\alpha)-\hsup\Cone(\alpha)\le\sup P+1-(\hsup P+1)=\sup P-\hsup P
\]
Choosing $P$ such that $\qpd_\A P=\sup P-\hsup P$, we demonstrate the claim.
\end{proof}

Recall that a bounded complex of finitely generated projective $R$-modules is called \emph{perfect}.

\begin{prop}\label{14}
Let $M \ne 0$ be a finitely generated $R$-module with $\qpd_RM<\infty$.
Then there exists a perfect complex $P$ which is a quasi-projective resolution of $M$ 
such that $\qpd_RM=\sup P-\hsup P$.
\end{prop}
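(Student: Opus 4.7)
Begin with a finite quasi-projective resolution $P$ of $M$ realizing the infimum, i.e., $\qpd_R M=\sup P-\hsup P$. The existence of such $P$ follows from $M\ne0$ forcing $\qpd_R M\in\N$, and by Remark~\ref{8}(4) we may further assume $\inf P=\hinf P$. Each homology $\H_i(P)\cong M^{\oplus a_i}$ is finitely generated, and only finitely many $a_i$ are nonzero, so the total homology $\bigoplus_i\H_i(P)$ is finitely generated.

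The plan is to replace $P$ by a perfect complex $Q$ having $\H_i(Q)\cong\H_i(P)$ for all $i$ and $\sup Q\le\sup P$. Since $Q$ would then be a quasi-projective resolution of $M$ with $\hsup Q=\hsup P$, one would obtain $\sup Q-\hsup Q\le\sup P-\hsup P=\qpd_R M$, and the infimum definition of $\qpd$ would force equality, yielding the desired perfect quasi-projective resolution.

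The construction of $Q$ proceeds by downward induction on the length $\sup P-\inf P$. The base case is immediate: if $P$ is concentrated in a single degree $s$, then $P_s\cong\H_s(P)\cong M^{\oplus a_s}$ is finitely generated, so $P$ itself is perfect. For the inductive step, at $s=\sup P$ the top cycle $\z_s(P)=\H_s(P)$ is finitely generated, so one can choose a finite-rank free module $F$ together with a surjection onto $\z_s(P)$ and lift it to a chain map $F[s]\to P$. A suitable mapping-cone or truncation construction then reduces the problem to a bounded complex of projectives of strictly smaller length whose total homology remains finitely generated, to which the inductive hypothesis applies. Splicing the finite-rank pieces obtained at each stage assembles the required perfect $Q$. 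The main obstacle is the technical bookkeeping: verifying that the reduction genuinely yields a shorter complex of finitely generated projectives with the correct homology and without enlarging the supremum. The essential ingredients are the lifting property of finite-rank projectives, used to extend partial chain maps at each stage, together with the observation that at every stage all relevant cycles and boundaries remain finitely generated as subquotients of the finitely generated total homology of $P$.
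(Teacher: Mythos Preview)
There is a genuine gap in the inductive step. Your justification that ``all relevant cycles and boundaries remain finitely generated as subquotients of the finitely generated total homology of $P$'' has the subquotient relation backwards: homology is a subquotient of $P_i$, not conversely. The original $P$ is only assumed to be a bounded complex of projective modules, so its terms may have infinite rank and $\b_i(P)$, $\z_i(P)$ need not be finitely generated. Concretely, neither of the moves you name shortens the complex while keeping the hypotheses intact: the mapping cone of $F[s]\to P$ has supremum $s+1$, not smaller; and if instead you drop $P_s$, the truncated complex $0\to P_{s-1}\to\cdots$ has top homology $\z_{s-1}(P)$, which sits in an exact sequence $0\to\b_{s-1}(P)\to\z_{s-1}(P)\to\H_{s-1}(P)\to0$ with $\b_{s-1}(P)\cong P_s/\H_s(P)$ typically not finitely generated, so the inductive hypothesis does not apply. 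No evident ``splicing'' repairs this from the top down, because to make $\H_s$ of the new complex match $\H_s(P)$ you would need the kernel of $F\twoheadrightarrow\H_s(P)$ to be complemented in $F$, which happens only when $\H_s(P)$ is projective.

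The paper's argument runs in the opposite direction. It invokes the classical approximation theorem (\cite[XVII, Propositions 1.2 and 1.3]{CE} or \cite[Theorem A.3.2]{C}) to obtain a bounded-below complex $F$ of \emph{finitely generated} projectives together with a quasi-isomorphism $\alpha:F\to G$; that construction builds $F$ from the bottom up, and Noetherianity enters because at each stage the submodule one must kill lies inside the already-constructed finitely generated free module $F_n$. This $F$ is a priori unbounded above, so a second idea is needed to truncate: since $\Cone(\alpha)$ is an acyclic bounded-below complex of projectives it is contractible, and for $i\ge s:=\sup G$ one has $\Cone(\alpha)_{i+1}=F_i$, which forces $F_s/\b_s(F)\cong\b_s(\Cone(\alpha))$ to be projective. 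The truncation $0\to F_s/\b_s(F)\to F_{s-1}\to\cdots\to F_{\inf G}\to0$ is then the desired perfect complex. Your outline supplies neither the bottom-up approximation nor the contractibility argument controlling the truncation.
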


\begin{proof}
Let $G$ be a quasi-projective resolution of $M$ with $\qpd_RM=\sup G-\hsup G$.
Note that each homology of $G$ is a finitely generated $R$-module.
There is a complex $(F,\partial)$ of finitely generated projective $R$-modules with 
$\inf F=\inf G$, and a quasi-isomorphism $\alpha: F\to G$; see \cite[XVII, Propositions 1.2 and 1.3]{CE} or \cite[Theorem A.3.2]{C}.
Since $\alpha$ is a quasi-isomorphism, $\Cone(\alpha)$ is acyclic, that is, 
$\H_i(\Cone(\alpha))=0$ for all $i\in\Z$.  As $\Cone(\alpha)$ is a complex of projective modules, it is split exact (i.e., contractible), so that each $\z_i(\Cone(\alpha))$ and $\b_i(\Cone(\alpha))$
are projective.  Let $s=\sup G$.  Note that $\Cone(\alpha)_{i+1}=F_i$ for all $i\ge s$. 
By exactness of $\Cone(\alpha)$ we have that 
\[
F_{s+1}\xrightarrow{\partial_{s+1}} F_{s}\to \b_{s}(\Cone(\alpha)) \to 0
\]
is exact, and so $F_s/\b_s(F)\cong \b_s(\Cone(\alpha))$ is projective. Thus 
\[
P=(0\to F_s/\b_s(F)\xrightarrow{\overline{\partial_s}}F_{s-1}\xrightarrow{\partial_{s-1}}\cdots\xrightarrow{\partial_{\inf G+1}}F_{\inf G}\to0)
\]
is a perfect complex which is quasi-isomorphic to $F$, and so is quasi-isomorphic to $G$.
Therefore $P$ is a quasi-projective resolution of $M$, and we have $\hsup P=\hsup G$ and 
$\sup P=s$ (otherwise $F_s/\b_s(F)=0$ and $\sup P<s$, which contradicts the fact that 
$s-\hsup G=\qpd_RM$.) Therefore $\qpd M = \sup P -\hsup P$.
\end{proof}

\begin{prop}\label{7}
Let $M$ be an $R$-module.
\begin{enumerate}[\rm(1)]
\item
Let $R \to S$ be a flat ring homomorphism.
If $P$ is a quasi-projective resolution of $M$ over $R$, then $P\otimes_RS$ is a quasi-projective resolution of $M\otimes_R S$ over $S$.
Moreover, it holds that $\qpd_{S}(M\otimes_R S)\le\qpd_RM$.
\item
Let $x$ be an element of $R$ which is regular on both $R$ and $M$.
If $P$ is a quasi-projective resolution of the $R$-module $M$, then $P\otimes_RR/(x)$ is a quasi-projective resolution of the $R/(x)$-module $M/xM$.
Moreover, it holds that $\qpd_{R/(x)}M/xM\le\qpd_RM$.
\item
Let $Q$ be a local ring, $\xx=x_1,\dots,x_c$ be a $Q$-regular sequence and $R=Q/(\xx)$.
Let $M$ be a finitely generated $R$-module.
Then $\qpd_QM\le\qpd_RM+c$.
\end{enumerate}
\end{prop}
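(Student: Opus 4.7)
The plan is to convert a finite quasi-projective resolution of $M$ over $R$ into one over $Q$ by ``spreading out'' each free $R$-module via the Koszul resolution of $R$ over $Q$, which is of length $c$. Since $\qpd_QM=-\infty$ when $M=0$ and the conclusion is trivial when $\qpd_RM=\infty$, I would immediately reduce to the case $\qpd_RM\in\N$.

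The first substantive step is to invoke Proposition~\ref{14} to choose a perfect complex $P$ of finitely generated free $R$-modules which is a quasi-projective resolution of $M$ with $\qpd_RM=\sup P-\hsup P$, and to write $\H_i(P)\cong M^{\oplus a_i}$. The key input is that $\xx$ is $Q$-regular, so the Koszul complex $K=K(\xx;Q)$ is a free $Q$-resolution of $R$ of length $c$; hence each $P_i\cong R^{n_i}$ has a free $Q$-resolution $K^{n_i}$ of length at most $c$.

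The second step is to build a Cartan--Eilenberg-type resolution: assemble a bounded double complex whose $i$-th column is the free $Q$-resolution $K^{n_i}$ of $P_i$ and whose horizontal differentials lift (and then correct, in the Shamash style) those of $P$. Let $F$ be the total complex. Then $F$ is a bounded complex of finitely generated free $Q$-modules; the standard spectral sequence of the bicomplex (whose vertical pages collapse because the columns are $Q$-resolutions of the $P_i$) yields $\H_i(F)\cong\H_i(P)\cong M^{\oplus a_i}$ viewed as $Q$-modules via restriction of scalars. Because each column contributes at most $c$ to the vertical degree, one gets $\sup F\le\sup P+c$ and $\inf F\ge\inf P$, while $\hsup F=\hsup P$.

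Concluding, $F$ is a finite quasi-projective resolution of $M$ over $Q$, so
$$\qpd_QM\le\sup F-\hsup F\le(\sup P+c)-\hsup P=\qpd_RM+c.$$
The main obstacle is the second step: one must construct the lifted horizontal maps so that, after Shamash-style correction by the Koszul differentials, the total complex is a genuine $Q$-complex quasi-isomorphic to $P$, and then verify that the length bound $\sup F\le\sup P+c$ holds. Once this Cartan--Eilenberg/Shamash construction is in place, the length bookkeeping and the identification of the homology are routine.
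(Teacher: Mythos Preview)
Your proposal is correct and follows the same strategy as the paper. The paper's proof of part~(3) invokes \cite[Theorem~2.4]{BJM} as a black box to produce, from the perfect $R$-complex $P$ furnished by Proposition~\ref{14}, a bounded complex $G$ of finitely generated free $Q$-modules with $\sup G=\sup P+c$ and $\H_i(G)\cong\H_i(P)$ for all $i$; this is precisely the lifting construction you sketch. Your Cartan--Eilenberg/Shamash description is a valid way to realize that theorem: resolving each $P_i$ by a length-$c$ Koszul complex over $Q$ and totalizing (with the necessary homotopy corrections, or alternatively via the Horseshoe-based Cartan--Eilenberg construction, which yields a genuine bicomplex without corrections) gives a bounded free $Q$-complex $F$ with $\sup F\le\sup P+c$ and the filtration spectral sequence identifies $\H_i(F)\cong\H_i(P)$. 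The final inequality $\qpd_QM\le(\sup P+c)-\hsup P=\qpd_RM+c$ is then identical to the paper's.
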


\begin{proof}
(1) We have $P\otimes_RS$ is a complex of projective $S$-modules.
By Lemma \ref{spseq}, one has $\H_i(P\otimes_R S)\cong \H_i(P)\otimes_RS$, which finishes the proof of the first assertion.

Let us show the second assertion. It clearly holds if $M\otimes_RS=0$, 
so we assume $M\otimes_RS\ne0$. In particular, $M\ne0$. We can also assume $n:=\qpd_RM<\infty$.
Then there exists a quasi-projective resolution $P$ of $M$ with $n=\sup P-\hsup P$.

Clearly $\sup(P\otimes_RS)\le\sup P$. We claim moreover that $\hsup(P\otimes_RS)=\hsup P$.
Indeed, let $h=\hsup P$.  From the isomorphisms $\H_i(P\otimes_RS)\cong\H_i(P)\otimes_RS$ we see that $\hsup(P\otimes_RS)\le h$. For each $i\in\Z$ there are integers $a_i\ge0$ such that 
$\H_i(P)\cong M^{\oplus a_i}$ and $h=\sup\{i\mid a_i\ne 0\}$. Thus we have 
$\H_h(P\otimes_RS)\cong\H_h(P)\otimes_RS\cong(M\otimes_RS)^{\oplus a_h}\ne 0$, since 
$M\otimes_RS\ne0$. Hence $\hsup(P\otimes_RS)=h$. Altogether we have
\[
\qpd_S(M\otimes_RS)\le\sup(P\otimes_RS)-\hsup(P\otimes_RS)\le\sup P-\hsup P=\qpd_RM.
\]

(2) By Lemma \ref{spseq} there is a spectral sequence
$$
\E^2_{p,q}=\Tor^R_p(\H_q(P),R/(x))\ \Longrightarrow\ \H_{p+q}(P\otimes_RR/(x)).
$$
Since $\Tor^R_{>0}(M,R/(x))=0$, the spectral sequence shows that $\H_i(P)\otimes_RR/(x)\cong \H_i(P\otimes_RR/(x))$ for each $i$. Thus the first assertion follows.

To show the second assertion, we may assume $M/xM\ne0$. Let $P$ be a quasi-projective resolution of $M$ such
that $\qpd M=\sup P-\hsup P$. Then $P\otimes_R R/(x)$ is a quasi-projective
resolution of $M/xM$. It is clear that $\sup(P\otimes_R R/(x))\le \sup P$. Put
$h=\hsup P$ and let $a>0$ be such that $\H_hP\cong M^{\oplus a}$. Then
$\H_h(P\otimes_R R/(x))\cong(\H_hP)\otimes_R R/(x)\cong(M/xM)^{\oplus a}\ne0$
since $M/xM\ne0$. Hence $\hsup(P\otimes_R R/(x))\ge h$, and therefore
$\qpd(M\otimes_R R/(x))\le \sup(P\otimes_R R/(x))-\hsup(P\otimes_R R/(x))\le \sup
P-\hsup P=\qpd M$.

(3) We may assume $M\ne 0$ and $\qpd_RM<\infty$.
Proposition \ref{14} guarantees that there exists a bounded complex $P$ of finitely generated 
free $R$-modules which is a quasi-projective resolution of $M$ over $R$ and satisfies 
$\qpd_RM=\sup P-\hsup P$. By \cite[Theorem 2.4]{BJM}, there exists a bounded complex $G$ of finitely generated free $Q$-modules with $\sup G=\sup P+c$ such that $\H_i(G)\cong\H_i(P)$ 
for all $i\in\Z$. The complex $G$ is a quasi-projective resolution of $M$ over $Q$ with 
$\hsup G=\hsup P$. Hence $\qpd_QM\le\sup G-\hsup G=(\sup P+c)-\hsup P=\qpd_RM+c$.
\end{proof}

Recall that an $R$-module $M$ is called {\em periodic} if there exist an integer $r>0$ and a projective resolution $(P,\partial)$ such that $\partial_i=\partial_{r+i}$ for all $i\ge0$.
The minimal integer $r$ satisfying this condition is called the {\em periodicity} of $M$.

The following proposition gives examples of modules of finite quasi-projective dimension which do not have finite projective dimension.

\begin{prop}\label{18}
\begin{enumerate}[\rm(1)]
\item
If $\m$ is a maximal ideal of $R$ generated by $n$ elements, then the $R$-module $R/\m$ admits a quasi-projective resolution of length $n$.
In particular, one has $\qpd_Rk\le\edim R<\infty$ for every local ring $R$ with residue field $k$.
\item
If $M\ne0$ is a periodic $R$-module of periodicity $r$, then $\qpd_RM=0$.
\end{enumerate}
\end{prop}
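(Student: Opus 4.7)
For (1), my plan is to exhibit the Koszul complex as a quasi-projective resolution. If $\m=(x_1,\ldots,x_n)$, then $K=K(x_1,\ldots,x_n;R)$ is a complex of finitely generated free $R$-modules concentrated in degrees $0,\ldots,n$. Since each generator $x_j$ annihilates every Koszul homology, each $\H_i(K)$ is naturally an $R/\m$-vector space, hence $\H_i(K)\cong(R/\m)^{\oplus a_i}$ for some $a_i\ge0$, with $a_0=1$ because $\H_0(K)=R/\m$. Thus $K$ is a finite quasi-projective resolution of $R/\m$ of length $n$, giving
\[
\qpd_R(R/\m)\le\sup K-\hsup K\le n-0=n.
\]
The ``in particular'' clause follows by choosing a minimal generating set of $\m$, which has size $\edim R$.

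For (2), the key structural fact I would establish first is the isomorphism $\Ker(\partial_{r-1})\cong M$. Given a periodic projective resolution $(P,\partial)$ with $\partial_i=\partial_{i+r}$ for $i\ge 1$, exactness of $P$ at $P_{r-1}$ and $P_r$ yields
\[
\Ker(\partial_{r-1})=\Im(\partial_r)\cong P_r/\Ker(\partial_r)=P_r/\Im(\partial_{r+1}),
\]
and periodicity identifies $P_r=P_0$ with $\partial_{r+1}=\partial_1$, so this last module equals $P_0/\Im(\partial_1)=M$. With this in hand, I would truncate $P$ to form the complex $P'=(0\to P_{s}\xrightarrow{\partial_{s}}P_{s-1}\to\cdots\to P_0\to 0)$ with $s=r-1$ when $r\ge 2$: then $\H_0(P')=M$, the intermediate homologies vanish by exactness of the original resolution, and $\H_{r-1}(P')=\Ker(\partial_{r-1})\cong M$. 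When $r=1$ this truncation degenerates to the single free module $P_0\ne M$, so one instead uses the length-$1$ truncation $0\to P_1\xrightarrow{\partial_1}P_0\to 0$, whose homologies $\H_0=M$ and $\H_1=\Ker(\partial_1)$ are both isomorphic to $M$ by the $r=1$ periodicity. Either way $\sup P'=\hsup P'$, so $\qpd_RM\le 0$, and equality holds since $M\ne 0$ forces $\qpd_RM\ge 0$.

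The main subtle step is the periodicity-based identification $\Ker(\partial_{r-1})\cong M$, which requires carefully assembling the exactness of $P$ with the repetition of its differentials; the remaining truncation argument is routine, apart from the mild nuisance that the $r=1$ case requires a truncation one degree longer than the pattern for $r\ge 2$ would suggest.
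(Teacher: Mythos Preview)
Your proof is correct and follows essentially the same approach as the paper's. For (1) both use the Koszul complex on generators of $\m$, and for (2) both truncate a periodic resolution; the paper compresses your case split by setting $s=\max\{1,r-1\}$ and asserting $\H_s(P')\cong M$, whereas you spell out the isomorphism $\Ker(\partial_{r-1})\cong M$ and treat $r=1$ separately.
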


\begin{proof}
(1) Consider the the Koszul complex $\K(\xx)$ on a system of generators $\xx=x_1,\dots,x_n$ of  $\m$.
This is a perfect complex and each homology is a finite-dimensional vector space over $R/\m$.
Hence it is a quasi-projective resolution of the $R$-module $R/\m$.

(2) There is a projective resolution $(P,\partial)$ of $M$ with $P_i=P_{r+i}$ and $\partial_i=\partial_{r+i}$ for all $i\ge0$, where $r$ is the periodicity of $M$.
Put $s=\max\{1,r-1\}$.
Then the truncation $P'=(0\to P_s\xrightarrow{\partial_s}\cdots\xrightarrow{\partial_1}P_0\to 0)$ is a perfect complex with $\H_s(P)\cong M \cong \H_0(P)$ and $\H_i(P)=0$ for $0<i<s$.
Hence $P'$ is a quasi-projective resolution of $M$, and hence we get $\qpd_RM\le \sup P' -\hsup P' =s-s=0$.
\end{proof}

\begin{prop}\label{qperfect}
Let $Q$ be a ring, $\xx=x_1,\dots,x_c$ a $Q$-regular sequence and $R=Q/(\xx)$.
Let $M$ be an $R$-module, and $P$ be a projective resolution of $M$ as a $Q$-module.
Then $P\otimes_QR$ is a quasi-projective resolution of $M$ as an $R$-module.
In particular, one has the inequality $\qpd_RM\le\pd_QM$.
\end{prop}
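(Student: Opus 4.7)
The plan is to compute the homology of $P\otimes_QR$ directly, identify it as a direct sum of copies of $M$ in each degree, and then read off the bound.

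First I would observe that $P\otimes_QR$ is a complex of projective $R$-modules, since tensoring a projective $Q$-module with $R$ over $Q$ yields a projective $R$-module. So the only thing to check for being a quasi-projective resolution is the shape of the homology. Since $P$ is a $Q$-projective resolution of $M$, one has $\H_i(P\otimes_QR)\cong\Tor_i^Q(M,R)$ for every $i$.

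Next I would compute $\Tor_i^Q(M,R)$ using the other variable. Because $\xx=x_1,\dots,x_c$ is a $Q$-regular sequence, the Koszul complex $\K(\xx;Q)$ is a free resolution of $R$ over $Q$, and therefore $\Tor_i^Q(M,R)\cong\H_i(\K(\xx;Q)\otimes_QM)=\H_i(\K(\xx;M))$. The key observation is that $M$ is an $R$-module, so each $x_j$ annihilates $M$; consequently every differential of $\K(\xx;M)$ vanishes, and $\H_i(\K(\xx;M))$ equals the module in degree $i$ of the Koszul complex, namely $M^{\oplus\binom{c}{i}}$ for $0\le i\le c$ and zero otherwise. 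This exhibits $P\otimes_QR$ as a quasi-projective resolution of $M$ over $R$.

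For the numerical bound, I would reduce immediately to the case $M\ne 0$ and $n:=\pd_QM<\infty$, and pick $P$ with $\sup P=n$. Then $\sup(P\otimes_QR)\le n$, while $\hsup(P\otimes_QR)\ge 0$ since $\H_0(P\otimes_QR)\cong M\ne 0$. Therefore
\[
\qpd_RM\le\sup(P\otimes_QR)-\hsup(P\otimes_QR)\le n-0=\pd_QM,
\]
as claimed. I do not expect any real obstacle here; the content is essentially just the change-of-rings identification of $\Tor^Q_\ast(M,R)$ with Koszul homology and the observation that this homology is free over $R$ on binomial ranks because $\xx$ acts as zero on $M$.
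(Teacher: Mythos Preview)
Your argument is correct and follows essentially the same route as the paper: identify $\H_i(P\otimes_QR)\cong\Tor^Q_i(M,R)$, compute this via the Koszul resolution of $R$ over $Q$, and use $\xx M=0$ to get $\H_i(\xx;M)\cong M^{\oplus\binom{c}{i}}$. The paper additionally remarks that these isomorphisms are $R$-linear because $Q\to R$ is surjective, and it leaves the ``in particular'' implicit; your explicit bound via $\sup(P\otimes_QR)\le n$ and $\hsup(P\otimes_QR)\ge 0$ is fine (indeed $\hsup(P\otimes_QR)=c$, but the weaker inequality already suffices for the stated conclusion).
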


\begin{proof}
Since $\xx$ is a regular sequence, the Koszul complex $\K(\xx,Q)$ is a free resolution of 
$R$ over $Q$. For each $0\le i\le c$ there are isomorphisms
$$
\H_i(P\otimes_QR) \cong \Tor^Q_i(M,R)\cong \H_i(M\otimes_Q\K(\xx,Q))\cong\H_i(\xx,M)
\cong M^{\oplus\binom{c}{i}}
$$
of $Q$-modules, where the last isomorphism holds since $\xx M=0$.
Since the map $Q\to R$ is surjective, the above isomorphisms are isomorphisms of $R$-modules.
\end{proof}

The following corollary is an immediate consequence.

\begin{cor}\label{36}
Let $R$ be a quotient of a regular local ring by a regular sequence.
Let $M$ be a finitely generated $R$-module.
Then $\qpd_RM\le\edim R$.
In particular, $M$ has finite quasi-projective dimension.
\end{cor}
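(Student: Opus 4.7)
The plan is to reduce to a ``minimal'' presentation $R=Q'/(\boldsymbol{y})$ in which $\edim Q'=\edim R$, and then apply Proposition \ref{qperfect} together with the Auslander--Buchsbaum formula over a regular local ring.

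Starting from the given presentation $R=Q/(\xx)$ with $Q$ regular local, maximal ideal $\n$, residue field $k$, and $\xx=x_1,\ldots,x_c$ a $Q$-regular sequence, here is how I would produce the minimal presentation. Set $d=\edim Q-\edim R$, which equals $\dim_k\bigl(((\xx)+\n^2)/\n^2\bigr)$. Choose $y_1,\ldots,y_d\in(\xx)$ whose images in $\n/\n^2$ form a basis of $((\xx)+\n^2)/\n^2$; these are part of a minimal generating set of $\n$, hence form a $Q$-regular sequence, so $Q':=Q/(y_1,\ldots,y_d)$ is regular local with $\edim Q'=\edim Q-d=\edim R$. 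Since $\n(\xx)\subseteq\n^2$, the $y_i$ are linearly independent modulo $\n(\xx)$ and therefore extend to a minimal generating set $y_1,\ldots,y_d,z_1,\ldots,z_{c-d}$ of $(\xx)$. Because $Q$ is Cohen--Macaulay and $(\xx)$ has height $c$ equal to its minimal number of generators, any minimal generating set of $(\xx)$ is again a $Q$-regular sequence; in particular $\bar z_1,\ldots,\bar z_{c-d}$ is a $Q'$-regular sequence, yielding $R=Q'/(\bar z_1,\ldots,\bar z_{c-d})$.

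With this minimal presentation in hand, Proposition \ref{qperfect} gives $\qpd_R M\le\pd_{Q'}M$, and since $Q'$ is regular local, the Auslander--Buchsbaum formula gives $\pd_{Q'}M\le\depth Q'=\dim Q'=\edim Q'=\edim R$. This completes the proof of the inequality; the ``in particular'' assertion is then immediate (both $-\infty$ and the case $M=0$ are handled by convention).

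The main obstacle is the first step, specifically the assertion that a minimal generating set of the complete intersection ideal $(\xx)$ is a $Q$-regular sequence. This rests on the standard fact that in a Cohen--Macaulay local ring, grade equals height, and any ideal generated by $\mathrm{ht}(I)$ elements (equivalently, any partial system of parameters) is generated by a regular sequence. The verification of this and of the two linear-independence statements about the $y_i$ (in $\n/\n^2$ and in $(\xx)/\n(\xx)$) is routine but needs some care.
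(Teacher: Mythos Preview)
Your proof is correct and follows the same approach as the paper, which states Corollary~\ref{36} as an ``immediate consequence'' of Proposition~\ref{qperfect} without further detail. The key point---reducing to a presentation $R=Q'/(\boldsymbol{y})$ with $\dim Q'=\edim R$ before applying Proposition~\ref{qperfect} and the Auslander--Buchsbaum formula over $Q'$---is exactly what the paper uses elsewhere (see the proof of Proposition~\ref{22}(4)), and you have spelled out carefully why such a minimal presentation exists. Your justification that any minimal generating set of the complete intersection ideal $(\xx)$ is again a regular sequence is correct; an alternative way to see this is that two minimal generating sets of $(\xx)$ differ by an invertible matrix over $Q$, so their Koszul complexes are isomorphic, and a sequence in a Noetherian local ring is regular if and only if its Koszul complex is acyclic in positive degrees.
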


By a {\em complete intersection ring}, we mean a local ring $(R,\m)$ such that its $\m$-adic completion is a quotient of a regular local ring by a  regular sequence.  This definition may be summarized by the diagram of local ring homomorphisms $R\to \widehat R \gets Q$, where $Q$ is regular and the kernel of $\widehat R \gets Q$ is generated by a $Q$-regular sequence. 

This diagram undoubtedly motivated the following definition from \cite{AGP}. A module $M$ over a local ring $R$ is said to have  
{\em finite complete intersection dimension}, written $\Cdim_RM<\infty$, if there exists a diagram of local ring homomorphisms (namely {\em quasi deformation}) $R\to R' \gets Q$  such that $R\to R'$ is flat, the kernel of 
$R'\gets Q$ is generated by a $Q$-regular sequence, and $M\otimes_RR'$ has finite projective dimension over $Q$. We refer the reader to \cite{AGP} for details on complete intersection dimension.

In view of Proposition \ref{qperfect}, it is natural to ask the following question.

\begin{ques}\label{q} Let $R\to R'$ be a flat map of local rings, and $M$ be an $R$-module.
If $\qpd_{R'}M\otimes_RR'<\infty$, then is $\qpd_RM<\infty$\,?  In particular, if $R$ is a local complete intersection, then does every finitely generated $R$-module have finite quasi-projective dimension? More generally, does an $R$-module $M$ with $\Cdim_RM<\infty$ satisfy $\qpd_RM<\infty$\,?
\end{ques}

We note that the converse to this last question is false.  Indeed, for a local ring $(R,\m,k)$
one always has that $\qpd_Rk<\infty$.  However $\Cdim_Rk<\infty$ only if $R$ is a complete intersection ring.

We do not have an answer to Question \ref{q}, but the following remark lends support.

\begin{rem}\label{cidim}
Let $R$ be a complete intersection ring, and $M$ be an $R$-module.  Then from Corollary \ref{36} one has that $\qpd_{\widehat R}M\otimes_R\widehat{R}<\infty$.  

More generally, suppose that $R$ is a local ring and $M$ an $R$-module 
with $\Cdim_RM<\infty$. Then there exists a quasi deformation $R\to R' \leftarrow Q$ such that $\pd_QM\otimes_RR'<\infty$. Thus by Proposition \ref{qperfect}, we have $\qpd_{R'}(M\otimes_RR')<\infty$.
\end{rem}

Dwyer, Greenlees and Iyengar \cite{DGI} defined a {\em virtually small} complex as a complex $X$ of $R$-modules whose thick closure in the derived category of $R$ contains a nonexact perfect complex.

\begin{prop}\label{20}
Let $M\ne0$ be a finitely generated $R$-module of finite quasi-projective dimension.
Then $M$ is virtually small as an $R$-complex.
\end{prop}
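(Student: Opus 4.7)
The plan is to exhibit a nonexact perfect complex in the thick subcategory $\operatorname{thick}(M)$ of $\D(R)$ generated by $M$, which by definition establishes that $M$ is virtually small. The key input is Proposition \ref{14}: since $\qpd_RM<\infty$ and $M\ne 0$, there exists a perfect complex $P$ that is simultaneously a quasi-projective resolution of $M$; in particular, $\H_i(P)\cong M^{\oplus a_i}$ for each $i\in\Z$, and at least one $a_i$ is positive, so that $P$ is nonexact.

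It then suffices to show $P\in\operatorname{thick}(M)$. I would argue this by induction on the cardinality of the (finite) set $S=\{i\in\Z\mid\H_i(P)\ne 0\}$. The base case $|S|=1$, say $S=\{b\}$, yields $P\simeq M^{\oplus a_b}[b]$ in $\D(R)$, which lies in $\operatorname{thick}(M)$ because thick subcategories are closed under shifts and finite direct sums. For the inductive step, set $b=\hsup P$ and form the standard soft truncation triangle
\[
\tau_{<b}P\longrightarrow P\longrightarrow\tau_{\ge b}P\longrightarrow\tau_{<b}P[1]
\]
in $\D(R)$. The right-hand term has homology concentrated in degree $b$ and is thus isomorphic to $M^{\oplus a_b}[b]\in\operatorname{thick}(M)$, while the left-hand term satisfies $\H_i(\tau_{<b}P)\cong M^{\oplus a_i}$ for $i<b$ and so has $|S|-1$ nonzero homologies; the inductive hypothesis applies to it. Closure of $\operatorname{thick}(M)$ under mapping cones then places $P$ in $\operatorname{thick}(M)$.

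The essential content of the proof is entirely carried by Proposition \ref{14}, which produces the perfect representative $P$ whose homologies are direct sums of copies of $M$. Once this is in hand, the passage from $\H_\ast(P)\subset\operatorname{thick}(M)$ to $P\in\operatorname{thick}(M)$ is a standard Postnikov/truncation argument in triangulated categories, and I would not expect any genuine obstacle there.
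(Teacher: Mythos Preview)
Your proof is correct and follows essentially the same approach as the paper. The paper invokes Proposition~\ref{14} to obtain a nonexact perfect complex $P$ with $\H_i(P)\cong M^{\oplus a_i}$, then cites the general fact (from \cite[3.10]{DGI}) that a bounded complex lies in the thick closure of its homology; your truncation-triangle induction is precisely the standard proof of that fact, written out explicitly.
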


\begin{proof}
By Proposition \ref{14}, there is a perfect complex $P$ which is a quasi-projective resolution of 
$M$. Then $P$ is nonexact as $M\ne0$. In general, every bounded complex $X$ of $R$-modules belongs to the thick closure of its homology complex $\H(X)$; see \cite[3.10]{DGI} for instance. Hence $P$ belongs to the thick closure of $\H(P)$, which coincides with the thick closure of $M$.
\end{proof}

In Remark \ref{r}, we will see that the converse of the above proposition does not necessarily hold.

According to \cite[Theorem 5.2]{P}, a local ring $R$ is a complete intersection if and only if every  nonexact bounded complex of finitely generated $R$-modules is virtually small.
Combining this with Proposition \ref{20} and Corollary \ref{36}, we are naturally lead to the following question, asking whether the converse of the first statement of Question \ref{q} holds.

\begin{ques}\label{q3}
Let $R$ be a  local ring.
Suppose that every finitely generated $R$-module has finite quasi-projective dimension.
Then is $R$ a complete intersection?
\end{ques}

Recently, Question \ref{q3} was considered by Briggs, Grifo and Pollitz \cite{BGP}. They answered it affirmatively in the special case when $R$ is an equipresented local ring; see \cite[Theorem A]{BGP}. In Theorem \ref{t} we will prove that if every finitely generated $R$-module has finite quasi-projective dimension, then such a ring $R$ has to be a so-called AB ring.

\section{The Auslander--Buchsbaum Formula and Depth Formula}

In this section we establish the Auslander--Buchsbaum formula for modules of finite quasi-projective dimension.  We also show that the depth formula holds for such modules. We first show that over a local ring all modules admit minimal quasi-projective resolutions realizing the quasi-projective dimension.

\begin{prop}\label{16}
Let $R$ be a local ring.
Let $M\ne0$ be a finitely generated $R$-module with $\qpd_RM<\infty$.
Then there exists a finite minimal quasi-projective resolution $F$ of $M$ such that 
$\qpd_RM=\sup F-\hsup F$.
\end{prop}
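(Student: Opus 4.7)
The plan is to begin with a perfect quasi-projective resolution realizing the quasi-projective dimension, and then ``minimalize'' it without disturbing either the supremum or the homological supremum. By Proposition \ref{14}, I may choose a perfect complex $P$ (that is, a bounded complex of finitely generated free $R$-modules) which is a quasi-projective resolution of $M$ and satisfies $\qpd_RM = \sup P - \hsup P$.

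Next I would invoke the standard minimalization for a bounded complex of finitely generated free modules over a local ring: whenever the matrix representing some differential $\partial_i$ of $P$ has a unit entry, one may split off as a direct summand a contractible subcomplex of the form $0\to R\xrightarrow{1}R\to0$ concentrated in consecutive degrees $i$ and $i-1$, leaving a direct complement in the category of complexes. Iterating this process (which must terminate, as the total rank strictly decreases at each step) yields a decomposition $P\cong F\oplus T$ where $F$ is minimal in the sense of the paper's definition and $T$ is a finite direct sum of trivial two-term complexes.

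Since $T$ is contractible, $\H_i(F)\cong\H_i(P)$ for every $i\in\Z$. In particular $F$ is itself a finite quasi-projective resolution of $M$ and $\hsup F=\hsup P$. On the other hand, splitting off contractible summands only removes basis elements from the free modules constituting $P$, so $\sup F\le\sup P$. Combining these observations,
\[
\sup F-\hsup F\le\sup P-\hsup P=\qpd_RM,
\]
while the reverse inequality $\qpd_RM\le\sup F-\hsup F$ is immediate from the definition of quasi-projective dimension, since $F$ is a finite quasi-projective resolution of $M$. Equality follows, and the proposition is proved.

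The only genuine step to check is the minimalization, which is the familiar Nakayama-lemma bookkeeping used to construct minimal free resolutions of modules; the sole novelty here is that it must be applied to a bounded complex whose homology is concentrated in several degrees rather than to a one-sided resolution, but the argument is identical. Thus no real obstacle should arise.
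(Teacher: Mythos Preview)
Your proof is correct and follows essentially the same route as the paper's: start from the perfect complex $P$ furnished by Proposition~\ref{14}, split it as $P\cong F\oplus E$ with $F$ minimal and $E$ contractible, and then run the inequality $\qpd_RM\le\sup F-\hsup F\le\sup P-\hsup P=\qpd_RM$. The only difference is cosmetic: where you sketch the minimalization by hand (iteratively stripping off trivial summands $0\to R\xrightarrow{1}R\to0$ whenever a unit appears in a differential), the paper simply invokes \cite[Proposition~1.1.2(iv)]{Av}, which packages exactly that argument.
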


\begin{proof} By Proposition \ref{14} there exists a perfect complex $P$ that is a quasi-projective resolution of $M$ and $\qpd_RM=\sup P-\hsup P$. It follows from 
\cite[Proposition 1.1.2(iv)]{Av} that there is an isomorphism (in the category) of complexes 
$P\cong F\oplus E$ such that $F$ and $E$ are complexes of finitely generated free $R$-modules with 
$F$ minimal and $E$ split exact. Thus $\H_i(P)\cong\H_i(F)\oplus\H_i(E)=\H_i(F)$ for each $i$.
Therefore $F$ is quasi-isomorphic to $P$, and so a minimal finite quasi-projective resolution 
of $M$.  Finally we have $\qpd_RM\le\sup F-\hsup F\le\sup P-\hsup P=\qpd_RM$, and so equality must hold.
\end{proof}

\begin{rem}
The lengths of minimal quasi-projective resolutions may not be the same.
For example, let $(R,\m)$ be a local ring with $\edim R\geq 2$ and $\m^2=0$.
Then the Koszul complex $K$ of $R$ with respect to the minimal system of generators of $\m$ is a minimal quasi-projective resolution of $k$ of length $\edim R$.
Since $\m^2=0$, a truncation $(0\to K_1 \to K_0 \to 0)$ of $K$ is a minimal quasi-projective resolution of $k$ as well.
\end{rem}

\begin{lem}\label{depth}
Let $(R,\m,k)$ be a local ring. Let $M\ne0$ be an $R$-module with $\qpd_RM<\infty$.
One then has the inequality $\depth M\le\depth R$.
\end{lem}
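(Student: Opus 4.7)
The plan is to induct on $d:=\depth R$, using Proposition~\ref{16} to produce a minimal finite quasi-projective resolution $F$ of $M$ and Proposition~\ref{7}(2) to reduce modulo a joint regular element.

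For the base case $d=0$, the socle $\soc R$ is nonzero, and I would exploit minimality of $F$ directly. Let $s=\sup F$, so $F_s\ne 0$ is a nonzero finitely generated free module; since $F_{s+1}=0$, one has $\H_s(F)=\Ker\partial_s$. Minimality forces $\partial_s(F_s)\subseteq\m F_{s-1}$, so for $\alpha\in\soc R$ and $y\in F_s$ we get $\partial_s(\alpha y)=\alpha\partial_s(y)\in\alpha\m F_{s-1}=0$ (using $\alpha\m=0$). Hence $(\soc R)\cdot F_s\subseteq\H_s(F)$, and this is nonzero because $F_s$ is a nonzero free $R$-module and $\soc R\ne 0$. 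Every element of $(\soc R)\cdot F_s$ is annihilated by $\m$, so it lies in $\soc\H_s(F)\cong(\soc M)^{\oplus a_s}$; this forces $\soc M\ne 0$ and hence $\depth M=0$.

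For the inductive step ($d>0$), if $\depth M=0$ there is nothing to prove. Otherwise $\m$ is associated to neither $R$ nor $M$, so prime avoidance yields $x\in\m$ regular on both. Proposition~\ref{7}(2) then gives $\qpd_{R/(x)}M/xM\le\qpd_RM<\infty$; combined with the standard facts $\depth R/(x)=d-1$ and $\depth_RM=\depth_{R/(x)}M/xM+1$ (with $M/xM\ne 0$ by Nakayama), the induction hypothesis applied to $M/xM$ over $R/(x)$ yields $\depth_RM\le d$.

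The only delicate point is the base case, where one must use minimality of $F$ to push a socle element of $R$ into the top homology of $F$ and so exhibit a socle element of $M$. Once that observation is in hand, the inductive step is a routine depth-reduction argument, made possible precisely by the good behavior of quasi-projective dimension under reduction modulo an element that is regular on both $R$ and $M$.
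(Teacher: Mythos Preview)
Your argument is correct and essentially the same as the paper's: both reduce modulo a sequence regular on $R$ and $M$ (you phrase this as induction on $\depth R$, the paper as a direct reduction by contradiction) and then, in the depth-zero case, exploit minimality of the top differential of a finite minimal quasi-projective resolution to force $\soc M\ne 0$. Your socle computation $(\soc R)\cdot F_s\subseteq\H_s(F)$ is exactly the paper's observation that $\Hom_R(k,\partial_s)=0$ yields $\Hom_R(k,\H_s(F))\cong\Hom_R(k,F_s)$, written in slightly more concrete language.
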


\begin{proof}
Suppose $t:=\depth R<\depth M$.
Then there exists a sequence $\xx=x_1,\dots,x_t$ in $R$ which is regular on both $R$ and $M$.
Replacing $R$ and $M$ with $R/(\xx)$ and $M/\xx M$ respectively, we may assume $\depth R=0$ by Proposition \ref{7}(2).
According to Proposition \ref{16}, one can take a finite minimal quasi-projective resolution $F$ of $M$.  Let $s=\sup F$. Applying the functor $\Hom_R(k,-)$ to the exact sequence 
$0\to \H_s(F)\to F_s \xrightarrow{\partial_s} F_{s-1}$ we get an exact sequence
$$
0\to \Hom_R(k,\H_s(F))\to \Hom_R(k,F_s)\xrightarrow{\Hom(k,\partial_s)} \Hom_R(k,F_{s-1}).
$$
As $F$ is minimal, $\Hom(k,\partial_s)=0$ and therefore $\Hom_R(k,\H_s(F))\cong \Hom_R(k,F_s)$.
Recall $\depth M>\depth R=0$ and $\H_s(F)$ is zero or a direct sum of copies of $M$. In either
case we get $\Hom_R(k,\H_s(F))=0$, and therefore $\Hom_R(k,F_s)=0$.
As $F_s\ne0$, we obtain $\Hom_R(k,R)=0$, that is, $\depth R>0$, which is a contradiction.
\end{proof}

\subsection{The Auslander--Buchsbaum Formula and its consequences}

We prove the Auslander--Buchsbaum formula for modules of finite quasi-projective dimension.

\begin{thm}\label{AB}
Let $R$ be a local ring, and $M$ be a finitely generated $R$-module of finite quasi-projective dimension. Then
$$
\qpd_RM=\depth R-\depth_RM.
$$
\end{thm}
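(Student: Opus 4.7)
The plan is to fix a minimal finite quasi-projective resolution of $M$ realizing $\qpd_RM$, tensor it with the Koszul complex on a minimal generating set of $\m$, and read off the Auslander--Buchsbaum equality from the maximum total degree of the resulting total homology, computed two ways via convergent spectral sequences. The case $M=0$ is trivial by the conventions on $\qpd$ and $\depth$, so I assume $M\ne 0$.

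First, I would invoke Proposition~\ref{16} to choose a minimal finite quasi-projective resolution $F$ of $M$ with $\qpd_RM=\sup F-\hsup F$. Write $s=\sup F$, $h=\hsup F$, $\H_i(F)\cong M^{\oplus a_i}$, and $F_i\cong R^{\oplus b_i}$, so $a_h>0$ and $b_s>0$. Set $n=\edim R$, $d=\depth R$, $m=\depth_RM$, fix a minimal system of generators $\xx=x_1,\dots,x_n$ of $\m$, and let $\K=\K(\xx,R)$ denote the Koszul complex. Two standard facts will be used: depth-sensitivity of the Koszul complex gives $\hsup\K(\xx,R)=n-d$ and $\hsup\K(\xx,M)=n-m$, and the ideal $(\xx)=\m$ annihilates every Koszul homology.

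Next, I would form the double complex $F\otimes_R\K$ with total complex $T$, and exploit the two convergent spectral sequences computing $\H_\ast(T)$. Taking $F$-homology first yields $E^2_{i,j}=\H_j(\xx,M)^{\oplus a_i}$. Taking $\K$-homology first yields $E^1_{i,j}=\H_j(\xx,R)^{\oplus b_i}$; the next-page differential, induced by $\partial^F$ on coefficients, vanishes because $\partial^F$ has entries in $\m$ by minimality of $F$ and $\m$ annihilates each $\H_j(\xx,R)$. Hence $E^2_{i,j}=\H_j(\xx,R)^{\oplus b_i}$.

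Finally, I would compute $\max\{n:\H_n(T)\ne 0\}$ from each spectral sequence. In the first, the $E^2$-support lies in $\{i\le h,\,0\le j\le n-m\}$, bounding this max by $h+n-m$, and the corner $E^2_{h,n-m}=\H_{n-m}(\xx,M)^{\oplus a_h}$ is nonzero. In the second, the $E^2$-support lies in $\{i\le s,\,0\le j\le n-d\}$, and the corner $E^2_{s,n-d}=\H_{n-d}(\xx,R)^{\oplus b_s}$ is nonzero. Once one verifies both corners are permanent cycles the two maxima match, yielding $h+n-m=s+n-d$, so $s-h=d-m$, which is the desired formula. The main obstacle is exactly this permanent-cycle check; it reduces to bidegree bookkeeping of $d_r$ for $r\ge 2$ in each spectral sequence, where for every position connected by $d_r$ to a corner, either the $i$-coordinate exceeds $h$ or $s$ (forcing $a_i=0$ or $b_i=0$) or the $j$-coordinate exceeds $n-m$ or $n-d$ (forcing the corresponding Koszul homology to vanish), so the differential is zero.
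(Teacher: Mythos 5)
Your argument is correct, but it follows a genuinely different route from the paper's. The paper also starts from a minimal finite quasi-projective resolution $F$ (Proposition \ref{16}), but then truncates it at $h=\hsup F$ to produce $C=\Coker\partial_{h+1}$ with $\pd_RC=s-h$, applies the classical Auslander--Buchsbaum formula to $C$, and reduces the problem to showing $\depth C=\depth M$; that last step requires Lemma \ref{depth} to produce a sequence regular on both $R$ and $M$, a Tor computation to see it is $C$-regular, and the injection $(M/\xx M)^{\oplus a}\hookrightarrow C/\xx C$ to conclude $\depth(C/\xx C)=0$. Your Koszul double-complex argument bypasses all of this: depth enters only through depth-sensitivity of $\K(\xx,-)$, the classical Auslander--Buchsbaum formula is never invoked, and Lemma \ref{depth} is not needed as an input (the inequality $\depth M\le\depth R$ falls out of $s-h\ge0$ rather than being a prerequisite). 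Both proofs use minimality of $F$ in an essential way --- the paper to read off $\pd_RC$ and the depth of $C/\xx C$, you to kill the $d^1$ differential on the page $\H_j(\xx,R)^{\oplus b_i}$ --- and your corner/permanent-cycle bookkeeping is sound: the relevant incoming differentials originate outside the homological (resp.\ module-theoretic) support of $F$, and the outgoing ones land beyond the Koszul depth bound. The trade-off is that your proof leans on convergence of the two spectral sequences of a bounded double complex and on Koszul depth-sensitivity, whereas the paper's stays at the level of regular sequences and the classical formula; in exchange yours is more self-contained relative to the surrounding lemmas and arguably closer in spirit to the standard Koszul-complex proof of the classical Auslander--Buchsbaum formula itself.
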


\begin{proof}
Applying Proposition \ref{16}, we choose a minimal quasi-projective resolution $F$ of $M$ with $s=\sup F$, $h=\hsup F$ and $\qpd_RM=s-h$.
Write $\H_h(F)=M^{\oplus a}$ with $a>0$.
Set $C=\Coker\partial_{h+1}$.
The sequence $0\to F_s \xrightarrow{\partial_s} \cdots\xrightarrow{\partial_{h+2}} F_{h+1}\xrightarrow{\partial_{h+1}} F_h \to C \to 0$ is exact.
As $F$ is minimal, this shows $\pd_RC=s-h$.
The Auslander--Buchsbaum formula yields $s-h=\depth R-\depth C$.
Thus, it suffices to show that $\depth C=\depth M$.
Put $u=\depth M$.
By Lemma \ref{depth} we can choose a sequence $\xx=x_1,\dots,x_u$ of elements of $R$ which is regular on $R$ and $M$.
Letting $G=(0\to F_s\xrightarrow{\partial_s}\cdots\xrightarrow{\partial_{h+1}}F_h\to0)$ be the truncated complex with $\deg F_h=0$, we get a quasi-isomorphism $G\to C$.
From Proposition \ref{7}(2) and its proof, we observe that $F\otimes_RR/(\xx)$ is a quasi-projective resolution of $M/\xx M$ over $R/(\xx)$ with $\hsup(F\otimes_RR/(\xx))=h$.
It follows that $\H_i(\xx,C)\cong\Tor_i^R(C,R/(\xx))\cong\H_i(G\otimes_RR/(\xx))=\H_{i+h}(F\otimes_RR/(\xx))=0$ for all $i>0$, which implies that the sequence $\xx$ is $C$-regular.
There is an injection $(M/\xx M)^{\oplus a}=\H_h(F/\xx F)\to\Coker(\overline{\partial_{h+1}})=C/\xx C$.
Since $M/\xx M$ has depth $0$, so does $C/\xx C$.
Therefore, $\depth C=u=\depth M$.
\end{proof}

Theorem \ref{AB} refines (2), (3) and (4) of Proposition \ref{Syz} for modules over a local ring.

\begin{cor}\label{13}
Let $R$ be a local ring.
\begin{enumerate}[\rm(1)]
\item
Let $M,N$ be finitely generated $R$-modules both of finite quasi-projective dimension.
Then $$\qpd_R(M\oplus N)=\sup\{\qpd_RM,\,\qpd_RN\}.$$
\item
Let $M$ be a nonzero finitely generated $R$-module.
\begin{enumerate}[\rm(a)]
\item
If $\qpd_RM<\infty$, then $\qpd_R(M\oplus F)=\qpd_RM$ for all finitely generated free $R$-modules $F$.
\item
One has $\qpd_R(\syz M)\le\sup\{\qpd_RM-1,\,0\}$, and the equality holds if the right-hand side is finite.
\end{enumerate}
\end{enumerate}
\end{cor}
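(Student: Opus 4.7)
The strategy is to reduce each of the three assertions to the Auslander--Buchsbaum formula (Theorem \ref{AB}) by computing depths: once finite quasi-projective dimension is in hand, $\qpd$ is determined by $\depth$. For part (1), Proposition \ref{Syz}(2) gives $\qpd_R(M\oplus N)\le\sup\{\qpd_RM,\qpd_RN\}<\infty$, so Theorem \ref{AB} applies to each of $M$, $N$, and $M\oplus N$. Combined with $\depth_R(M\oplus N)=\min\{\depth_RM,\depth_RN\}$ and the identity $\depth R-\min\{a,b\}=\sup\{\depth R-a,\depth R-b\}$, one reads off
\[
\qpd_R(M\oplus N)=\depth R-\depth_R(M\oplus N)=\sup\{\qpd_RM,\,\qpd_RN\}.
\]

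For (2)(a), a finitely generated free module $F$ satisfies $\qpd_RF\le\pd_RF\le 0$ by Remark \ref{8}(3), so Proposition \ref{Syz}(2) yields $\qpd_R(M\oplus F)\le\qpd_RM<\infty$. Lemma \ref{depth} gives $\depth_RM\le\depth R$, hence $\depth_R(M\oplus F)=\min\{\depth_RM,\depth R\}=\depth_RM$, and applying Theorem \ref{AB} to both $M$ and $M\oplus F$ gives the asserted equality.

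For (2)(b), the inequality is immediate when $\syz M=0$ or $\qpd_RM=\infty$, so I would assume $\syz M\ne 0$ and $\qpd_RM<\infty$; then Proposition \ref{Syz}(4) gives $\qpd_R(\syz M)<\infty$, enabling use of Theorem \ref{AB}. The plan is to compute $\depth_R(\syz M)$ from the short exact sequence $0\to\syz M\to R^{n}\to M\to 0$ via the depth lemma, which yields $\depth_R(\syz M)\ge\min\{\depth R,\depth_RM+1\}$, combined with Lemma \ref{depth}'s bound $\depth_R(\syz M)\le\depth R$ and the complementary depth-lemma inequality $\depth_RM\ge\min\{\depth_R(\syz M)-1,\depth R\}$. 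The two cases $\depth_RM<\depth R$ and $\depth_RM=\depth R$ then pin down $\depth_R(\syz M)=\depth_RM+1$ and $\depth_R(\syz M)=\depth R$, respectively. Translating back via Theorem \ref{AB} gives $\qpd_R(\syz M)=\qpd_RM-1$ when $\qpd_RM\ge 1$ and $\qpd_R(\syz M)=0$ when $\qpd_RM=0$, which together read $\qpd_R(\syz M)=\sup\{\qpd_RM-1,0\}$. The mild subtlety lies entirely here: since Proposition \ref{Syz}(4) supplies only the weaker bound $\qpd_R(\syz M)\le\qpd_RM$, the improvement by one must come from the depth-lemma analysis of the syzygy exact sequence rather than from any formal manipulation of quasi-projective resolutions.
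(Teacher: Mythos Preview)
Your proposal is correct and follows essentially the same approach as the paper: use Proposition~\ref{Syz} to secure finiteness of $\qpd$, then invoke Theorem~\ref{AB} together with the standard depth identities $\depth(M\oplus N)=\min\{\depth M,\depth N\}$ and $\depth(\syz M)=\min\{\depth M+1,\depth R\}$. The paper is marginally more economical---it deduces (2)(a) directly from (1) by setting $N:=F$, and for (2)(b) it quotes the syzygy depth formula without re-deriving it via the depth lemma---but the substance is identical.
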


\begin{proof}
(1) We may assume $M\ne0\ne N$.
We then have $\qpd(M\oplus N)<\infty$ by Proposition \ref{Syz}(2).
Theorem \ref{AB} yields
\begin{align*}
\qpd(M\oplus N)
&=\depth R-\depth(M\oplus N)
=\depth R-\inf\{\depth M,\,\depth N\}\\
&=\sup\{\depth R-\depth M,\,\depth R-\depth N\}
=\sup\{\qpd M,\,\qpd N\}.
\end{align*}

(2a) The assertion immediately follows by putting $N:=F$ in (1).

(2b) We may assume that $M$ is a nonfree $R$-module with $\qpd M<\infty$.
Then Proposition \ref{Syz}(4) implies $\qpd(\syz M)<\infty$, and we obtain
\begin{align*}
\qpd(\syz M)
&=\depth R-\depth(\syz M)=\depth R-\inf\{\depth M+1,\,\depth R\}\\
&=\sup\{\depth R-\depth M-1,\,0\}
=\sup\{\qpd M-1,\,0\}
\end{align*}
by virtue of Theorem \ref{AB}.
\end{proof}

Here is an immediate and interesting consequence of Theorem \ref{AB}, which is used later.

\begin{cor}\label{23}
Let $R$ be a local ring and let $M$ be a nonfree finitely generated $R$-module.
Assume $\qpd_RM<\infty$ and $\depth_RM=\depth R$.
Then $M^{\oplus a}$ is a second syzygy for some $a>0$.
In particular, $M$ is a first syzygy.
\end{cor}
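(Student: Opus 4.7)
The plan is to feed the hypothesis $\depth_R M = \depth R$ through Theorem \ref{AB} to deduce $\qpd_R M = 0$, then use Proposition \ref{16} to model $M$ on a very short minimal quasi-projective resolution $F$, and finally read off the second-syzygy structure directly from $F$. First, Theorem \ref{AB} gives $\qpd_R M = \depth R - \depth_R M = 0$. Proposition \ref{16} then produces a minimal finite quasi-projective resolution $(F,\partial)$ of $M$ with $\sup F = \hsup F =: s$, and by Remark \ref{8}(1) there is a positive integer $a$ with $\H_s(F) \cong M^{\oplus a}$. Since $F_{s+1} = 0$, this coincides with $\Ker \partial_s$.

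Next I would rule out $s = 0$. If $s = 0$, then $F$ is concentrated in degree zero and $F_0 \cong \H_0(F) \cong M^{\oplus a}$; since $F_0$ is free and $R$ is local, $M$ becomes a finitely generated summand of a free module, hence projective, hence free, contradicting the nonfreeness assumption. Therefore $s \ge 1$.

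With $s \ge 1$ in hand, both $F_s$ and $F_{s-1}$ are finitely generated free $R$-modules, and the exact sequence
$$
0 \longrightarrow M^{\oplus a} \longrightarrow F_s \xrightarrow{\partial_s} F_{s-1} \longrightarrow F_{s-1}/\Im \partial_s \longrightarrow 0
$$
exhibits $M^{\oplus a}$ as a second syzygy of $F_{s-1}/\Im \partial_s$. The ``in particular'' clause then follows by composing a coordinate inclusion $M \hookrightarrow M^{\oplus a}$ with $M^{\oplus a} \hookrightarrow F_s$, embedding $M$ in a finitely generated free module. The only slightly delicate point is the exclusion of $s = 0$, which rests on the standard local-ring fact that a finitely generated module with a free direct sum must itself be free; everything else is a direct reading-off from a minimal quasi-projective resolution of quasi-length zero.
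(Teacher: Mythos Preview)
Your proof is correct and mirrors the paper's: use Theorem~\ref{AB} to get $\qpd_RM=0$, pick via Proposition~\ref{16} a minimal quasi-projective resolution $F$ with $\sup F=\hsup F=:s$, exclude the degenerate case by nonfreeness of $M$, and read off the exact sequence $0\to M^{\oplus a}\to F_s\to F_{s-1}$. (Your step ``$s=0$ forces $F$ to be concentrated in degree $0$'' tacitly assumes $\inf F\ge 0$, which is harmless after a shift; the paper's one-line ``since $M$ is nonfree, $s>0$'' makes the same implicit normalization.)
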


\begin{proof}
By Proposition \ref{16} there is a finite minimal quasi-projective resolution $F$ of $M$ with
$\qpd_RM=\sup F-\hsup F$.  Set $s=\sup F$. Since $M$ is nonfree, we have $s>0$. The assumption on depth of the corollary and 
Theorem \ref{AB} imply $s=\hsup F$. There is an exact sequence $0\to\H_s(F)\to F_s\to F_{s-1}$ and 
$\H_s(F)=M^{\oplus a}\ne0$ for some $a>0$.  Thus $M^{\oplus a}$ is a second syzygy. By choosing an embedding $M\to M^{\oplus a}$, we obtain an embedding $M\to F_s$, and so $M$ is a first syzygy.
\end{proof}

Applying the above corollary, we get the following result.

\begin{cor}\label{c}
Let $(R,\m)$ be a local ring, $M$ be a finitely generated $R$-module, and $N$ be a maximal non-free summand of $M$. If $\qpd_RM<\infty$, then $\soc R\subseteq\ann N$.
\end{cor}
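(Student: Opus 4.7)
The plan is to reduce to the essential case, apply Corollary \ref{23} to $M$ to get an embedding into a free module, and then use a standard Nakayama argument exploiting that $N$ has no free summand.

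First I would dispose of the trivial cases: if $\soc R=0$ (equivalently, $\depth R\ge 1$) there is nothing to prove, and if $N=0$ the conclusion is vacuous. So I may assume $\soc R\ne 0$, which forces $\depth R=0$, and $N\ne 0$, which means $M=N\oplus(\text{free})$ is itself nonfree. Since $\qpd_RM<\infty$ and $M\ne 0$, Lemma \ref{depth} gives $\depth_RM\le\depth R=0$, so in fact $\depth_RM=\depth R$. These are exactly the hypotheses of Corollary \ref{23} for $M$, so $M$ is a first syzygy; that is, there is an embedding $M\hookrightarrow G$ into a finitely generated free $R$-module $G$. Restricting along $N\subseteq M$, I obtain an injection $N\hookrightarrow G$.

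The key step is to upgrade this to $N\subseteq\m G$, using that $N$ has no nonzero free summand. I would argue by contradiction: if some $n\in N$ lay outside $\m G$, then writing $n=\sum a_ie_i$ in a basis $\{e_i\}$ of $G$, some $a_i$ would be a unit, so Nakayama (or an explicit change of basis) would let me extend $n$ to a basis of $G$ and split $G=Rn\oplus G'$. Then projecting $N\subseteq G$ onto $Rn$ would, since $Rn\subseteq N$, yield $N=Rn\oplus(N\cap G')$, producing the free summand $Rn\cong R$ and contradicting the maximality of the free part of $M$.

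Granted $N\subseteq\m G$, the conclusion is immediate: for any $s\in\soc R$, one has $s\m=0$, and therefore
\[
sN\subseteq s(\m G)=(s\m)G=0,
\]
so $\soc R\subseteq\ann_RN$. The only substantive step is the Nakayama argument that $N\subseteq\m G$; the rest of the proof is assembling Lemma \ref{depth} and Corollary \ref{23} to reach a first-syzygy situation where that argument applies.
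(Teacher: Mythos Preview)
Your proof is correct and follows essentially the same approach as the paper: both reduce to $\depth R=0$, apply Corollary~\ref{23} to embed $N$ in a free module, and argue that a unit coordinate would yield a forbidden free summand of $N$. You phrase the key step as the containment $N\subseteq\m G$ before multiplying by $\soc R$, whereas the paper works directly with an element $r\in\soc R$ coordinate-by-coordinate, but the underlying mechanism is identical.
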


\begin{proof}
If $\depth R>0$, then $\soc R=0$ and the conclusion clearly holds.
So we may assume $\depth R=0$.
By Corollary \ref{23} the $R$-module $M$ is a submodule of a free module, and so is $N$.
Hence there exists a monomorphism $f=(f_1,\dots,f_n):N\hookrightarrow R^{\oplus n}$ with $n>0$.
Pick any element $r\in\soc R$.
Suppose that $f_i(rx)=r\cdot f_i(x)\ne0$ for some $1\le i\le n$ and $x\in N$.
Then $f_i(x)$ is a unit of $R$.
Hence the map $f_i:N\to R$ is surjective, and therefore split.
This contradicts the choice of $N$.  Therefore we have $ f_i(rx)=0$ for all $1\le i\le n$ and $x\in N$. Therefore $f(rx)=0$ for all $x\in N$. Since $f$ is injective, $rx=0$ for all $x\in N$.
Thus $r\in\ann N$.
\end{proof}

So far, no module with infinite quasi-projective dimension has appeared.
Therefore one may wonder if all modules have finite quasi-projective dimension, especially in view of the fact that the residue field of a local ring does so (Proposition \ref{18}).
The corollary above gives many examples of modules of infinite quasi-projective dimension.

\begin{exam}\label{e}
Let $k$ be a field and $R=k[x,y]/(x^2,xy,y^2)$.
Then $\qpd_RR/(x)=\infty$ by Corollary \ref{c}.
\end{exam}

\begin{rem}\label{r}
The converse of Proposition \ref{20} does not necessarily hold.
Indeed, let $R$ be as in Example \ref{e}, and set $M=R/(x)\oplus R$.
Then it is clear that $M$ is virtually small, but has infinite quasi-projective dimension by Corollary \ref{c}.
\end{rem}

Here is another corollary of Theorem \ref{AB}.

\begin{cor}\label{epqd}
Let $M$ be a finitely generated module over a (not necessarily local) ring $R$ with $\pd_RM<\infty$.
Then $\qpd_RM=\pd_RM$.
\end{cor}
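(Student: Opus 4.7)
The inequality $\qpd_RM \le \pd_RM$ is already recorded in Remark \ref{8}(3), so the task is to establish the reverse inequality. My plan is to reduce to the local case and then quote the quasi-projective Auslander--Buchsbaum formula (Theorem \ref{AB}) together with the classical one.

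First, dispose of the trivial cases: if $M=0$ then both invariants equal $-\infty$, and if $\pd_RM=0$ then $M$ is projective and nonzero, forcing $\qpd_RM\in\N$ with $\qpd_RM\le 0$, hence $\qpd_RM=0$. So assume $M\ne0$ and $n:=\pd_RM\ge1$. Using the standard fact that for a finitely generated module over a Noetherian ring
\[
\pd_RM=\sup\{\pd_{R_\m}M_\m\mid \m\in\spec R\text{ maximal}\},
\]
and that this supremum is attained when $\pd_RM<\infty$ (for instance by computing projective dimension via nonvanishing of $\Ext_R^i(M,R/\m)=\Ext_{R_\m}^i(M_\m,k(\m))$), fix a maximal ideal $\m$ with $\pd_{R_\m}M_\m=n$. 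In particular $M_\m\ne0$.

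Now I apply the flat base-change statement in Proposition \ref{7}(1) to the localization $R\to R_\m$ to conclude $\qpd_{R_\m}M_\m\le\qpd_RM$. Since $\pd_{R_\m}M_\m<\infty$, Remark \ref{8}(3) also gives $\qpd_{R_\m}M_\m\le\pd_{R_\m}M_\m<\infty$, so the quasi-projective Auslander--Buchsbaum formula (Theorem \ref{AB}) applies and yields
\[
\qpd_{R_\m}M_\m=\depth R_\m-\depth_{R_\m}M_\m.
\]
The classical Auslander--Buchsbaum formula, valid because $\pd_{R_\m}M_\m<\infty$, says the right-hand side also equals $\pd_{R_\m}M_\m$. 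Combining everything,
\[
\pd_RM=\pd_{R_\m}M_\m=\qpd_{R_\m}M_\m\le\qpd_RM\le\pd_RM,
\]
so equality holds throughout.

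There is no real obstacle here; the only point requiring care is justifying that the supremum defining $\pd_RM$ over maximal ideals is attained, which is a standard consequence of the $\Ext$-characterization of projective dimension together with the hypothesis $\pd_RM<\infty$. Everything else is a direct invocation of earlier results in the paper.
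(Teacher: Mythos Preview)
Your proof is correct and follows essentially the same route as the paper: reduce to the local case by localizing at a prime (you use a maximal ideal, the paper uses a prime) where $\pd$ is attained, invoke Proposition \ref{7}(1) for the inequality $\qpd_{R_\p}M_\p\le\qpd_RM$, and then combine Theorem \ref{AB} with the classical Auslander--Buchsbaum formula to close the chain of inequalities.
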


\begin{proof}
We may assume $M\ne0$.
Remark \ref{8}(3) implies $\qpd_RM\le\pd_RM<\infty$.
Choose a prime ideal $\p$ of $R$ such that $\pd_{R_\p}M_\p=\pd_RM$.
Proposition \ref{7}(1) implies $\qpd_{R_\p}M_\p\le\qpd_RM$.
As $R_\p$ is a local ring, we can apply Theorem \ref{AB} to obtain
$$
\pd_RM\ge \qpd_RM\ge\qpd_{R_\p}M_\p=\depth R_\p-\depth_{R_\p}M_\p=\pd_{R_\p}M_\p=\pd_RM.
$$
Thus the equality $\qpd_RM=\pd_RM$ holds.
\end{proof}

\subsection{The Depth Formula}

Next we prove that the depth formula holds for modules of finite quasi-projective dimension.

\begin{thm}\label{depthformula}
Let $(R,\m,k)$ be a local ring.
Let $M$ and $N$ be finitely generated $R$-modules.
Suppose that $M$ has finite quasi-projective dimension and $\Tor_{>0}^R(M,N)=0$.
Then
$$
\depth M+\depth N=\depth R+\depth(M\otimes_RN).
$$
\end{thm}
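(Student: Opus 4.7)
My plan is to reduce the statement to the classical Auslander depth formula for modules of finite projective dimension, by extracting from a finite minimal quasi-projective resolution of $M$ an auxiliary module $C$ which has finite projective dimension, is Tor-independent from $N$, and has the same depth as $M$.

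First, by Proposition \ref{16}, I would choose a finite minimal quasi-projective resolution $F$ of $M$ realizing the quasi-projective dimension, so that $\qpd_R M = s - h$ where $s = \sup F$ and $h = \hsup F$, and $\H_i(F) \cong M^{\oplus a_i}$ for some non-negative integers $a_i$. Since $\Tor^R_{>0}(M, N) = 0$, all higher Tor-modules between $\H_i(F)$ and $N$ vanish, so the first spectral sequence of Lemma \ref{spseq} collapses at its $E^2$ page and yields $\H_i(F \otimes_R N) \cong (M \otimes_R N)^{\oplus a_i}$ for every $i$.

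Next, following the proof of Theorem \ref{AB}, I set $C = \Coker \partial_{h+1}$. The truncation $0 \to F_s \to \cdots \to F_{h+1} \to F_h \to C \to 0$ is a minimal free resolution of $C$, so $\pd_R C = s - h < \infty$ and $\depth_R C = \depth_R M$. Computing $\Tor^R_i(C, N)$ via this resolution gives $\Tor^R_i(C, N) \cong \H_{i+h}(F \otimes_R N) \cong (M \otimes_R N)^{\oplus a_{i+h}}$, which vanishes for $i > 0$ since $a_{i+h} = 0$ whenever $i + h > h$. The classical Auslander depth formula therefore applies to the Tor-independent pair $(C, N)$ and yields
\[
\depth_R M + \depth_R N = \depth_R C + \depth_R N = \depth R + \depth_R(C \otimes_R N).
\]

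The crux, and what I expect to be the main obstacle, is to establish the depth equality $\depth_R(M \otimes_R N) = \depth_R(C \otimes_R N)$. For this I would work with the truncated complex $\overline F = (0 \to C \to F_{h-1} \to \cdots \to F_{\inf F} \to 0)$, which is quasi-isomorphic to $F$, so that $\overline F \otimes_R N$ also has homologies $(M \otimes_R N)^{\oplus a_i}$; all of its terms are direct sums of copies of $N$, save for $C \otimes_R N$ at the top. I expect to proceed by induction on $h$: the base case $h = 0$ forces $C = M^{\oplus a_0}$ directly, so the two depths coincide. For the inductive step, the short exact sequence $0 \to M^{\oplus a_h} \to C \to \Im \partial_h \to 0$ together with $0 \to \Im \partial_h \to F_{h-1} \to \Coker \partial_h \to 0$ should allow one to extract the desired equality via repeated application of the depth lemma, reducing to a shorter quasi-projective situation obtained from the lower portion of $F$.
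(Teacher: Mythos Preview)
Your setup coincides with the paper's: construct $C=\Coker\partial_{h+1}$, observe $\pd_RC=s-h$ and $\depth_RC=\depth_RM$, show $\Tor^R_{>0}(C,N)=0$, and apply Auslander's classical depth formula to the pair $(C,N)$. Your spectral-sequence derivation of $\Tor^R_{>0}(C,N)=0$ is in fact tidier than the paper's, which chases through the cycle/boundary exact sequences. You have correctly identified the remaining obstacle: one must show $\depth_R(C\otimes_RN)=\depth_R(M\otimes_RN)$.

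The gap is in your proposed attack on this equality. An induction on $h$ does not get off the ground: the ``lower portion'' $(0\to F_{h-1}\to\cdots\to F_{\inf F}\to0)$ is \emph{not} a quasi-projective resolution of $M$, since its top homology is $Z_{h-1}=\Ker\partial_{h-1}$ rather than a power of $M$; there is no smaller instance of the same problem to which the inductive hypothesis applies. Replacing the top term by $C$ to form $\overline F$ restores the correct homology, but $C$ is not projective, so again you are outside the framework. The depth lemma applied to the sequences $0\to(M\otimes_RN)^{a_h}\to C\otimes_RN\to B_{h-1}\otimes_RN\to0$ and $0\to B_{h-1}\otimes_RN\to F_{h-1}\otimes_RN\to D\otimes_RN\to0$ yields only inequalities; a careful recursion down to $D_1=\H_{\inf F}(F)\otimes_RN$ does give one direction, but when $s=h$ (equivalently $\qpd_RM=0$) the chain of strict inequalities collapses and no contradiction is reached.

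The paper does not attempt to prove $\depth(C\otimes_RN)=\depth(M\otimes_RN)$ directly. Instead it first records the easy implication that $\depth(M\otimes_RN)=0$ forces $\depth(C\otimes_RN)=0$ (hence the formula), and then runs an induction on $\depth N$. The base case $\depth N=0$ forces $\pd_RC=0$, hence $s=h$ and $C=F_s$; minimality of $F$ then gives, after applying $\Hom_R(k,-)$, an isomorphism $\Hom_R(k,(M\otimes_RN)^{a_s})\cong\Hom_R(k,F_s\otimes_RN)\ne 0$, so $\depth(M\otimes_RN)=0$. The inductive step passes from $N$ to $N/xN$ for $x$ regular on both $N$ and $M\otimes_RN$. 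This use of minimality at the bottom of the induction is precisely the ingredient your outline is missing.
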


\begin{proof}
The assertion of the theorem is clear if either $M$ or $N$ is free, so we assume that neither $M$ nor $N$ is free. According to Proposition \ref{16}, there exists a minimal quasi-projective resolution $(F,\partial)$ of $M$ with $s=\sup F$, $h=\hsup F$ and $\qpd M=s-h$.
Putting $C=\Coker\partial_{h+1}$, we get an exact sequence 
\[
0\to F_s\xrightarrow{\partial_s}\cdots\xrightarrow{\partial_{h+1}}F_h\to C\to0
\]
This is a minimal free resolution of $C$, so that $\pd_RC=s-h$.
Set $Z_i=\z_i(F)$ and $B_i=\b_i(F)$.
There exist exact sequences
$$
0\to B_i\to Z_i\to M^{\oplus a_i}\to0,\qquad
0\to Z_i\to F_i\to B_{i-1}\to0
$$
for $i\in\Z$ and $a_i\ge 0$.
Hence $Z_i\cong\syz B_{i-1}$.
Using the assumption that $\Tor_{>0}^R(M,N)=0$, we see that
$$
\Tor_j^R(B_i,N)\cong\Tor_j^R(Z_i,N)\cong\Tor_{j+1}^R(B_{i-1},N)
\cong\cdots\cong\Tor_{j+i+1-\inf F}^R(B_{\inf F-1},N)=0
$$
for all $i\ge \inf F$ and $j>0$, where the equality holds as $B_{\inf F-1}=0$.
Hence $\Tor_{>0}^R(Z_i,N)=0$ and $\Tor_{>0}^R(B_i,N)=0$ for every $i\in\Z$.

Note that there is an exact sequence
\begin{equation}\label{1}
0\to\H_h(F)\to C\xrightarrow{\overline{\partial_h}}F_{h-1}\to D\to0,
\end{equation}
where $\H_h(F)=M^{\oplus a_h}\neq 0$, $\Im\overline{\partial_h}=B_{h-1}$ and $D=\Coker\partial_h$.
It follows that $\Tor_{>0}^R(C,N)=0$.
The depth formula for a module of finite projective dimension \cite[Theorem 1.2]{A} implies
\begin{equation}\label{2}
\depth C+\depth N=\depth R+\depth(C\otimes_RN).
\end{equation}
Theorem \ref{AB} implies $\depth R-\depth M=\qpd_RM=s-h=\pd_RC=\depth R-\depth C$, and hence
\begin{equation}\label{3}
\depth C=\depth M.
\end{equation}
Tensoring $N$ with \eqref{1} and noting $\Tor_1^R(B_{h-1},N)=0$, we get exact sequences
\begin{align}
\label{4}&0\to\H_h(F)\otimes_RN\to C\otimes_RN\to B_{h-1}\otimes_RN\to0,\\
\label{6}&\Tor_1^R(D,N)\to B_{h-1}\otimes_RN\to F_{h-1}\otimes_RN\to D\otimes_RN\to0
\end{align}
where $\H_h(F)\otimes_RN=M^{\oplus a_h}\otimes_RN\ne0$ by Nakayama's lemma.
We claim the following.
\begin{equation}\label{5}
\text{If $\depth(M\otimes_RN)=0$, then $\depth M+\depth N=\depth R+\depth(M\otimes_RN)$.}
\end{equation}
Indeed, if $\depth(M\otimes_RN)=0$, then it follows from \eqref{4} that $\depth(C\otimes_RN)=0$, and the equality follows from \eqref{2} and \eqref{3}.

From now on, we prove the assertion of the theorem by induction on $\depth N$.
Assume $\depth N=0$. Then by \eqref{2} we have
$$
0=(\depth R-\depth C)+\depth(C\otimes_RN)=\pd_RC+\depth(C\otimes_RN).
$$
Note that both $\pd_RC$ and $\depth(C\otimes_RN)$ are nonnegative.
Hence $s-h=\pd_RC=\depth(C\otimes_RN)=0$, which implies $s=h$ and $C=F_s$.
There is an exact sequence 
\[
0\to Z_{s-1}/B_{s-1}\to F_{s-1}/B_{s-1}\to F_{s-1}/Z_{s-1}\to0.
\]
We have $Z_{s-1}/B_{s-1}=\H_{s-1}(F)=M^{\oplus a_{s-1}}$, $F_{s-1}/B_{s-1}=D$ and $F_{s-1}/Z_{s-1}\cong B_{s-2}$.
Therefore $\Tor_{>0}^R(D,N)=0$ and from \eqref{4} and \eqref{6} we obtain an exact sequence
$$
0\to\H_s(F)\otimes_RN\to F_s\otimes_RN\xrightarrow{\partial_s\otimes_RN}F_{s-1}\otimes_RN\to D\otimes_RN\to0.
$$
Applying $\Hom_R(k,-)$ gives an exact sequence
$$
0\to\Hom_R(k,\H_s(F)\otimes_RN)\to\Hom_R(k,F_s\otimes_RN)\xrightarrow{\Hom_R(k,\partial_s\otimes_RN)=0}\Hom_R(k,F_{s-1}\otimes_RN),
$$
which implies $\Hom_R(k,\H_s(F)\otimes_RN)\cong\Hom_R(k,F_s\otimes_RN)$.
Recall that $\H_s(F)\otimes_RN=M^{\oplus a_s}\otimes_RN\ne0$ and $F_s$ is a nonzero free $R$-module.
Since $\depth N=0$, we observe $\depth(M\otimes_RN)=0$.
The assertion now follows from \eqref{5}.

Now suppose $\depth N>0$.
If $\depth(M\otimes_RN)=0$, then by \eqref{5} we are done.
So let $\depth(M\otimes_RN)>0$.
Then we find an element $x\in R$ which is regular on both $N$ and $M\otimes_RN$.
The exact sequence $0\to N\xrightarrow{x}N\to N/xN\to0$ induces exact sequences
\begin{align*}
&0=\Tor_1^R(M,N)\to\Tor_1^R(M,N/xN)\xrightarrow{0}M\otimes_RN\overset{x}{\hookrightarrow}M\otimes_RN,\\
&0=\Tor_i^R(M,N)\to\Tor_i^R(M,N/xN)\to\Tor_{i-1}^R(M,N)=0\text{ for all }i>1.
\end{align*}
Therefore $\Tor_{>0}^R(M,N/xN)=0$.
The induction hypothesis implies $\depth M+\depth N/xN=\depth R+\depth(M\otimes_RN/xN)$.
It remains to note that $\depth N/xN=\depth N-1$ and $\depth(M\otimes_RN/xN)=\depth((M\otimes_RN)/x(M\otimes_RN))=\depth(M\otimes_RN)-1$.
\end{proof}

Applying Theorem \ref{depthformula} together with Remarks \ref{8}(3) and \ref{cidim}, we recover the results of Auslander, Huneke and Wiegand.

\begin{cor}
Let $R$ be a local ring.
Let $M$ and $N$ be finitely generated $R$-modules with $\Tor_{>0}^R(M,N)=0$.
Then $\depth M+\depth N=\depth R+\depth(M\otimes_RN)$ if either of the following holds.
\begin{enumerate}[\rm(1)]
\item
{\rm(Auslander \cite{A})} $M$ has finite projective dimension.
\item
{\rm(Huneke and Wiegand \cite{HW})} $R$ is a complete intersection.
\end{enumerate}
\end{cor}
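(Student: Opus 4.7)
The plan is to deduce both parts from Theorem \ref{depthformula} by verifying in each case that quasi-projective dimension is finite, possibly after a flat base change.

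For part (1), if $\pd_R M < \infty$, then Remark \ref{8}(3) immediately gives $\qpd_R M \le \pd_R M < \infty$, so Theorem \ref{depthformula} applies to $M$ and $N$ and yields the depth formula directly.

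For part (2), I would argue via the $\mathfrak{m}$-adic completion. By the definition of complete intersection used in this paper, $\widehat R$ is a quotient of a regular local ring by a regular sequence, so Corollary \ref{36} (as recorded in Remark \ref{cidim}) supplies $\qpd_{\widehat R}(M \otimes_R \widehat R) < \infty$. Since $R \to \widehat R$ is flat, the Tor vanishing hypothesis transports to
$$
\Tor_i^{\widehat R}(M\otimes_R \widehat R,\, N\otimes_R \widehat R) \;\cong\; \Tor_i^R(M,N) \otimes_R \widehat R \;=\; 0
$$
for all $i>0$. Theorem \ref{depthformula} applied over $\widehat R$ to the modules $M \otimes_R \widehat R$ and $N \otimes_R \widehat R$ then yields the depth formula over $\widehat R$. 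Using the isomorphism $(M\otimes_R \widehat R) \otimes_{\widehat R} (N\otimes_R \widehat R) \cong (M \otimes_R N)\otimes_R \widehat R$ together with the fact that depth of a finitely generated module is preserved under the faithfully flat extension $R \to \widehat R$, this equality descends to the desired depth formula over $R$.

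The only step requiring real care, though still routine, is the descent to $R$: faithful flatness of $R \to \widehat R$ handles both the transport of Tor vanishing and the preservation of depths, while the defining property of a complete intersection is precisely what lets Corollary \ref{36} supply the finite quasi-projective dimension on $\widehat R$ that Theorem \ref{depthformula} demands. No additional input beyond these standard flat base change facts and the results already established in the excerpt is needed.
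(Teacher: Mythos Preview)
Your proposal is correct and follows essentially the same route as the paper: the paper's proof simply invokes Theorem \ref{depthformula} together with Remark \ref{8}(3) for part (1) and Remark \ref{cidim} (which records the passage to $\widehat R$ via Corollary \ref{36}) for part (2). You have merely spelled out the routine faithfully flat descent details that the paper leaves implicit.
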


\section{Finitistic length for quasi-projective resolutions}

We have studied quasi-projective dimension so far, but it is also natural to ask about the infimum of the lengths of quasi-projective resolutions of a given object.
In this section, we investigate this question.
We make the following definition.

\begin{defn}\label{qpld}
Let $\A$ be an abelian category with enough projective objects.
Let $M$ be an object of $\A$.
We define the {\em quasi-projective length} of $M$ in $\A$ by
$$
\qpl_\A M=
\begin{cases}
\inf\{\sup P-\inf P\mid\text{$P$ is a finite quasi-projective resolution of $M$}\} & (\text{if $M\ne0$}),\\
-\infty & (\text{if $M=0$}).
\end{cases}
$$
We simply set $\qpl_RM=\qpl_{\Mod R}M$ for an $R$-module $M$.
\end{defn}

\begin{rem}\label{19}
\begin{enumerate}[(1)]
\item
Let $\A$ be an abelian category with enough projective objects.
Let $M\in\A$.
\begin{enumerate}[(a)]
\item
There are inequalities $\qpd_\A M\le\qpl_\A M\le\pd_\A M$, and the equalities hold if $\pd_\A M<\infty$ and $\A=\mod R$ by Corollary \ref{epqd}.
\item
One has $\qpd_\A M<\infty$ if and only if $\qpl_\A M<\infty$.
\end{enumerate}
\item
Let $R$ be a local ring, and let $M$ be a nonzero finitely generated $R$-module with $\qpl_RM<\infty$.
Then there exists a finite minimal quasi-projective resolution $F$ of $M$ such that $\qpl_RM=\sup F-\inf F$.
This is a consequence of the proof of Proposition \ref{16}.
\end{enumerate}
\end{rem}

Let $R$ be a local ring.
It is easy to deduce from Theorem \ref{AB} that
$$
\depth R=\sup \{\qpd_RM\mid\text{$M$ is a finitely generated $R$-module with $\qpd_RM<\infty$}\}.
$$
Thus it is natural to think about the following numerical invariant.

\begin{defn}
We define the {\em finitistic quasi-projective length} of $R$ by
$$
\finpql R=\sup \{\qpl_RM\mid\text{$M$ is a finitely generated $R$-module with $\qpl_RM<\infty$}\}.
$$
\end{defn}

Recall that the {\em codepth} of a finitely generated module $M$ over a local ring $R$ is defined as $\codepth_RM=\edim R-\depth M$.

\begin{prop}\label{22}
Let $(R,\m,k)$ be a local ring.
\begin{enumerate}[\rm(1)]
\item
For a finitely generated $R$-module $M$, one has the inequality $\qpl_RM\ge\sup\{\height\p\mid\p\in\Min_RM\}$.
\item
There are inequalities $\dim R\le\qpl_Rk\le\finpql R$.
\item 
One has $\dim R=\qpl_R k$ if and only if $R$ is regular.
\item
Assume that $R$ is a quotient of a regular local ring by a regular sequence.
Then $\qpl_RM\le\codepth_RM$ for all finitely generated $R$-modules $M$.
In particular, $\finpql R \le \edim R$.
\item
If $R$ is an Artinian complete intersection, then $\finpql R = \edim R$.
\end{enumerate}
\end{prop}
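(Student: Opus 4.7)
For parts (1) and (2), I plan to use the New Intersection Theorem of Peskine--Szpiro, Hochster, and Roberts. Given a finite quasi-projective resolution $P$ of $M$ realizing $\qpl_R M$ (which by Proposition~\ref{14} may be taken to be a perfect complex), localize at any $\p\in\Min_R M$: since $\p$ is minimal in the support of $M$, each $\H_i(P)_\p\cong M_\p^{\oplus a_i}$ has finite length over $R_\p$, and $P_\p$ is a nonexact perfect complex of $R_\p$-modules with finite length homology. The New Intersection Theorem then gives $\sup P_\p-\inf P_\p\ge\dim R_\p=\height\p$, and since localization of a perfect complex preserves the ranks of the free modules (hence $\sup-\inf$), this yields $\qpl_R M\ge\height\p$. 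Part (2) is then immediate: $\qpl_R k\le\finpql R$ by definition (with $\qpl_R k<\infty$ from Proposition~\ref{18}(1)), and $\dim R\le\qpl_R k$ follows from (1) applied to $M=k$ using $\Min_R k=\{\m\}$.

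For (3), the implication ``$R$ regular $\Rightarrow\qpl_R k=\dim R$'' is clear, since Remark~\ref{19}(1)(a) gives $\qpl_R k\le\pd_R k=\dim R$, combined with (2). For the converse, suppose $\qpl_R k=\dim R$, choose a finite minimal quasi-projective resolution $F$ of $k$ realizing this, and arrange $\inf F=\hinf F=0$ via Remark~\ref{8}(4). The argument in the proof of Theorem~\ref{AB} (showing that the cokernel at the top of a minimal quasi-projective resolution of $M$ has projective dimension $\sup F-\hsup F$ and depth equal to $\depth_RM$) applied to $M=k$ yields $\sup F-\hsup F=\depth R$. The plan is then to show $R$ is Cohen--Macaulay; once this is established, $\hsup F=\dim R-\depth R=0=\hinf F$, so $F$ is a free resolution of $k^{\oplus a_0}$, giving $\pd_R k\le\dim R<\infty$ and therefore $R$ regular. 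The main obstacle is verifying Cohen--Macaulayness of $R$ from the hypothesis $\qpl_R k=\dim R$, for which I expect to invoke a sharper form of the intersection-theoretic argument used in (1).

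For (4), I would choose a presentation $R=Q/(\xx)$ with $Q$ regular local and $\xx$ a $Q$-regular sequence lying in $\m_Q^2$, preserving $\edim Q=\edim R$; this is available because any defining element outside $\m_Q^2$ may be absorbed into a smaller regular local ring. Then $Q$ is regular of dimension $\edim R$, and since $\xx M=0$ the depths of $M$ over $Q$ and $R$ coincide, so Auslander--Buchsbaum over $Q$ yields $\pd_Q M=\edim R-\depth_R M=\codepth_R M$. By Proposition~\ref{qperfect}, $P\otimes_Q R$ is a quasi-projective resolution of $M$ over $R$ of length $\pd_Q M$, so $\qpl_R M\le\codepth_R M$, and in particular $\finpql R\le\edim R$.

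For (5), the upper bound is (4) with $M=k$. For the lower bound, since $\depth R=0$, Theorem~\ref{AB} gives $\qpd_R k=0$, so $\qpl_R k$ coincides with the minimum homological width of a minimal quasi-projective resolution of $k$; the value $\edim R$ is realized by the Koszul complex on a minimal system of generators of $\m$. To exclude shorter resolutions, I would analyze the spectral sequence $E^2_{p,q}=\Tor^R_p(\H_q(F),k)\Rightarrow\H_{p+q}(F\otimes_R k)$ together with the Poincar\'e series $P^k_R(t)=1/(1-t)^c$ of the residue field over an Artinian complete intersection of codimension $c=\edim R$: for any minimal quasi-projective resolution $F$ of $k$, the abutment has dimensions $\rank F_n$ vanishing for $n>\qpl_R k$, while the $E^2$-page is governed by the polynomial growth of $\Tor^R_p(k,k)$ of degree $c-1$, forcing the homological width to be at least $c$. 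The main obstacle is making this counting argument rigorous; alternatively, one may appeal to the cohomology operators of Gulliksen and Eisenbud on complete intersections, or to the triangulated level machinery of~\cite{BI}.
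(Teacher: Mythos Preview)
Your arguments for (1), (2), and (4) are essentially the paper's. The issues are in (3) and (5).

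For (3), your reduction to showing that $R$ is Cohen--Macaulay is a genuine gap, and your proposed detour through Cohen--Macaulayness is not how the paper proceeds. The paper applies the equality case of the New Intersection Theorem \emph{directly}: a minimal quasi-projective resolution $F$ of $k$ with $\len F=\dim R$ is a nonexact perfect complex with finite-length homology of length exactly $\dim R$; by Andr\'e's proof of the Direct Summand Conjecture, $R$ admits a big Cohen--Macaulay module, and then Bridgeland--Iyengar \cite[Theorem~2.4]{BI} gives regularity outright. Your intuition that a ``sharper intersection-theoretic argument'' is needed is exactly right, and \cite{BI} is precisely that result---but it does not pass through Cohen--Macaulayness; it yields regularity in one step. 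I do not see a way to establish Cohen--Macaulayness first from $\qpl_Rk=\dim R$ without already invoking machinery of this strength, so the detour buys nothing and leaves the proof incomplete.

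For (5), your spectral-sequence/Poincar\'e-series counting idea is not clearly workable: the inequality you obtain from the filtration on $\H_n(F\otimes_Rk)$ goes the wrong way (it bounds $\operatorname{rank}F_n$ \emph{above} by a sum of Betti numbers), and the differentials on the $E^r$-pages prevent a clean dimension count. The paper instead uses the triangulated level machinery of \cite{ABIM} (not \cite{BI}): for a minimal quasi-projective resolution $F$ of $k$ with $l=\len F$ one has $\operatorname{level}^k_R(F)\le l+1$ (as in the proof of \cite[3.10]{DGI}), while \cite[Theorem~11.3]{ABIM} gives $\operatorname{level}^k_R(F)\ge\edim R-\cx_RF+1=\edim R+1$ since $F$ is perfect and hence $\cx_RF=0$. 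This yields $l\ge\edim R$, and combined with (4) gives $\qpl_Rk=\edim R=\finpql R$. Your alternative suggestion of level machinery was therefore the right instinct; the reference you want is \cite{ABIM}.
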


\begin{proof}
(1) We may assume $\qpl_RM = l<\infty$.
Then we can take a minimal quasi-projective resolution $F$ of $M$ with $l=\sup F-\inf F$.
Localizing $F$ at any $\p\in\Min_RM$, we get a quasi-projective resolution $F_\p$ of $M_\p$.
Since $M_\p$ has finite length over $R_\p$, so does each homology of the complex $F_\p$.
The New Intersection Theorem implies that $l\geq \dim R_\p=\height\p$.

(2) The assertion immediately follows from (1) and by definition.

(3) By using the Direct Summand Theorem \cite{Andre}, $R$ admits a big Cohen--Macaulay module. Now by \cite[Theorem 2.4]{BI}, $R$ is regular. The converse holds since $\dim R\le \qpl_Rk\le \pd_Rk=\dim R$ by (2)
and Remark \ref{19}(1a). 

(4) Choose a regular local ring $Q$ and a $Q$-sequence $\xx$ such that $R=Q/(\xx)$ and 
$\dim Q=\edim R=:e$.
It follows from Proposition \ref{qperfect} and the Auslander--Buchsbaum formula over $Q$ that 
$\qpl_RM\le\pd_QM=e-\depth M=\codepth_RM\le e=\edim R$ for every finitely generated $R$-module $M$.

(5) Let $\qpl_R k=l$.
Then by (4) we have $l\leq \edim R$.
By Remark \ref{19}(2), there exists a minimal quasi-projective resolution $F$ of $k$ such that $l=\sup F-\inf F$.
Therefore $\mathrm{level}^k_R(F) \le l+1$ by the proof of \cite[3.10]{DGI}; see \cite{ABIM} for the definition of a level and the details.
By \cite[Theorem 11.3]{ABIM}, $\mathrm{level}^k_R(F)\geq \edim R - \cx_RF+1$.
Since $F$ is a perfect complex, it has finite projective dimension in the derived category  of $R$. Therefore $\cx_RF=0$ and hence $l\geq \edim R$.
\end{proof}

The following result shows that the equality of the first inequality in Proposition \ref{22}(2) does not necessarily hold.

\begin{thm}\label{28}
Let $(R,\m,k)$ be a local ring.
One has equivalences:
\begin{enumerate}[\rm(1)]
\item
$\finpql R=0\iff\qpl_Rk=0\iff\text{$R$ is a field}$.
\item
$\finpql R=1\iff\qpl_Rk=1\iff\begin{cases}
\text{$R$ is a discrete valuation ring, or}\\
\text{$R$ is not a field but an artinian hypersurface, or}\\
\text{$R$ is not a field but satisfies $\m^2=0$.}
\end{cases}$
\end{enumerate}
\end{thm}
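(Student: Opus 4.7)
The plan is to treat (1) and (2) separately, establishing a cycle of implications in each. Throughout we use that $\qpl_R k<\infty$ (by Proposition~\ref{18} and Remark~\ref{19}(1b)) and that $\qpl_R k\le\finpql R$ by definition. For (1), if $\qpl_R k=0$, any minimal quasi-projective resolution of $k$ consists of a single free module $R^n$ in one degree with $R^n\cong k^a$ as $R$-modules; Nakayama's lemma forces $\mathfrak m R^n=0$ and hence $\mathfrak m=0$, so $R$ is a field. Conversely, over a field every nonzero module is free and admits a length-$0$ quasi-projective resolution, giving $\finpql R=0$.

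For (2), the direction $\finpql R=1\Rightarrow\qpl_R k=1$ follows from $\qpl_R k\le 1$ combined with $\qpl_R k\ne 0$ (else $R$ would be a field by (1), contradicting $\finpql R=1$). The substance is to show $\qpl_R k=1\Rightarrow R$ satisfies one of the three structural conditions $\Rightarrow\finpql R\le 1$. For the classification, set $e=\edim R$ and apply Remark~\ref{19}(2) to obtain a minimal quasi-projective resolution $F=(0\to R^{n_1}\xrightarrow{\partial}R^{n_0}\to 0)$ of $k$ with $\H_i(F)\cong k^{a_i}$. The case $a_0=0$ is excluded: $\partial$ surjective together with minimality $\Im\partial\subseteq\mathfrak m R^{n_0}$ forces $R^{n_0}=0$ by Nakayama, contradicting length $1$. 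If $a_1=0$, then $F$ is a free resolution of $k^{a_0}$, so $\pd_R k\le 1$ and $R$ is a DVR. If $a_0,a_1>0$: minimality and $\H_0\cong k^{a_0}$ force $\Im\partial=\mathfrak m R^{n_0}$ and $a_0=n_0$, and splitting off the minimal free cover $R^{n_0 e}\twoheadrightarrow\mathfrak m R^{n_0}$ from $R^{n_1}$ yields $(\syz\mathfrak m)^{n_0}\cong k^{a_1}$, so $\syz\mathfrak m$ is a direct sum of copies of $k$. When $e=1$ with $\mathfrak m=(x)$, $\syz\mathfrak m=\ann(x)$ is automatically annihilated by $\mathfrak m$; Cohen structure theory then identifies $R$ as either a DVR (if $\ann(x)=0$) or an artinian local ring with principal maximal ideal, i.e.\ an artinian hypersurface. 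When $e\ge 2$, picking a basis $\epsilon_1,\ldots,\epsilon_e$ of the minimal cover $R^e\twoheadrightarrow\mathfrak m$, the nonzero Koszul syzygies $x_i\epsilon_j-x_j\epsilon_i\in\syz\mathfrak m$ must be annihilated by $\mathfrak m$; independence of the $\epsilon_i$'s then forces $\mathfrak m x_i=0$ for every minimal generator, so $\mathfrak m^2=0$.

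For the upper bound $\finpql R\le 1$: the DVR case is immediate since $\pd_R M\le 1$ for every $M$. For artinian hypersurfaces (automatically $e=1$), Proposition~\ref{22}(4) applied after passing to the completion gives $\qpl_R M\le\codepth_R M\le 1$. For $\mathfrak m^2=0$ non-field, $\soc R=\mathfrak m$, and Corollary~\ref{c} forces any $M$ with $\qpd_R M<\infty$ to decompose as $M\cong k^n\oplus R^a$; the block-diagonal complex $P=(0\to R^{ne}\to R^n\to 0)$ with each of the $n$ blocks given by $(x_1,\ldots,x_e)\colon R^e\to R$ is a length-$1$ quasi-projective resolution of $k^n$ (here $\H_1\cong(\syz\mathfrak m)^n\cong k^{ne^2}\cong(k^n)^{e^2}$, using $\mathfrak m^2=0$). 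Following the shifted-sum construction of Proposition~\ref{Syz}(2), the complex $P\oplus R^a\oplus R^{ae^2}[1]$ has $\H_0\cong k^n\oplus R^a=M$ and $\H_1\cong(k^n)^{e^2}\oplus R^{ae^2}\cong M^{e^2}$, yielding a length-$1$ quasi-projective resolution of $M$. Realization of $\qpl_R k=1$ in each case is witnessed by $0\to R\xrightarrow{x}R\to 0$ (for the DVR and artinian hypersurface cases with $\mathfrak m=(x)$) and the Koszul-type truncation $0\to R^e\xrightarrow{(x_1,\ldots,x_e)}R\to 0$ (for $\mathfrak m^2=0$).

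The main obstacle is the classification step for $a_0,a_1>0$: extracting that $\syz\mathfrak m$ is a $k$-vector space by carefully splitting off the minimal free cover from $R^{n_1}$, and then translating this via the Koszul syzygies into the dichotomy $e=1$ (DVR or artinian hypersurface) versus $\mathfrak m^2=0$ ($e\ge 2$). A secondary subtlety for the $\mathfrak m^2=0$ upper bound is that a naive direct sum $P\oplus R^a$ fails to produce a quasi-projective resolution of $k^n\oplus R^a$ (the free part of $\H_1$ would be missing); the shifted-sum construction of Proposition~\ref{Syz}(2) is genuinely needed.
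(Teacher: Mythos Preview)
Your proof is correct, and in the key classification step it is actually more direct than the paper's. Both arguments begin by showing that a minimal length-$1$ quasi-projective resolution of $k$ forces $\syz^2 k=\syz\mathfrak m$ to be a $k$-vector space (equivalently, $\mathfrak m\cdot\syz^2 k=0$); this is exactly the paper's isomorphism $k^{a}\cong(\syz^2 k)^{b}\oplus R^{h}$. From there the approaches diverge for $e\ge 2$: you observe that the Koszul relations $x_i\epsilon_j-x_j\epsilon_i$ lie in $\syz\mathfrak m$, so annihilation by $\mathfrak m$ and free independence of the $\epsilon_i$ immediately give $\mathfrak m x_i=0$, hence $\mathfrak m^2=0$. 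The paper instead invokes the Auslander--Buchsbaum formula (Theorem~\ref{AB}) to get $\depth R=0$, computes $c=re$ via $\Hom_R(k,-)$, and then uses a length count to prove $(R/\soc R)^{e}\cong\mathfrak m=(x_1)\oplus\cdots\oplus(x_e)$, finally deducing $\mathfrak m^2=0$ from $x_ix_j\in(x_i)\cap(x_j)=0$. Your route avoids the socle bookkeeping entirely.

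For the upper bound when $\mathfrak m^2=0$, you are also more explicit than the paper: the paper cites Corollary~\ref{13}(2a), which concerns $\qpd$ rather than $\qpl$, whereas you carry out the shifted-sum construction from Proposition~\ref{Syz}(2) to exhibit a length-$1$ quasi-projective resolution of $k^n\oplus R^a$. Two minor points you could make more explicit: after splitting the cover $R^{n_1}\twoheadrightarrow\mathfrak m^{n_0}$ one has $k^{a_1}\cong(\syz\mathfrak m)^{n_0}\oplus R^{h}$, and $h=0$ because $R$ is not a summand of a $k$-vector space when $R$ is not a field; and in the $e=1$ case the passage to completion is unnecessary since artinian local rings are already complete.
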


\begin{proof}
(1) If $R$ is a field, then every $R$-module is free and $\finpql R=0$.
By Proposition \ref{22}(2), if $\finpql R=0$, then $\qpl_Rk=0$.
Now, assume $\qpl_Rk=0$.
Then there exists a minimal quasi-projective resolution $F=(0\to F_s\to0)$ of $k$.
We have $R^{\oplus a}\cong F_s\cong\H_s(F)\cong k^{\oplus b}$ for some $a,b>0$.
Taking the annihilators implies $\m=0$, and hence $R$ is a field.

(2) When $R$ is a discrete valuation ring, it is seen from (1) and Proposition \ref{22}(4) that $\finpql R=1$.

When $R$ is an artinian hypersurface which is not a field, Cohen's structure theorem shows $R\cong S/(x^n)$ for some discrete valuation ring $(S,xS,k)$ and $n\ge2$.
It follows from (1) and Proposition \ref{22}(4) that $\finpql R=1$.

When $\m^2=0$ but $R$ is not a field, there is an isomorphism $\m\cong k^{\oplus e}$ with $e=\edim R\ge 1$.
Take a minimal system of generators $x_1,\dots,x_e$ of $\m$.
The perfect complex $F=(0\to R^{\oplus e}\xrightarrow{\left(\begin{smallmatrix}x_1&\cdots&x_e\end{smallmatrix}\right)}R\to0)$ satisfies $\H_0(F)=k$ and $\H_1(F)=\syz\m\cong\syz(k^{\oplus e})\cong\m^{\oplus e}\cong k^{\oplus e^2}$.
Hence $F$ is a quasi-projective resolution of $k$, which implies $\qpl_Rk\le1$.
Let $M$ be an $R$-module with $\qpd_RM<\infty$.
Then $M$ is a submodule of a free module by Corollary \ref{23}, and it is seen that $M$ is a direct sum of copies 
of $R$ and $k$. Hence $\qpl_RM\le1$ by Corollary \ref{13}(2a).
This together with (1) shows $\finpql R=1$.

It follows from (1) and Proposition \ref{22}(2) that if $\finpql R=1$, then $\qpl_Rk=1$.

Suppose $\qpl_Rk=1$.
Then $R$ is not a field by (1), and there exists a minimal quasi-projective resolution 
$F=(0\to F_s\to F_{s-1}\to0)$ of $k$.
Note by Remark \ref{8}(4) that $\H_{s-1}(F)\ne0$.
Letting $\H_s(F)=k^{\oplus a}$ and $\H_{s-1}(F)=k^{\oplus b}$ with $a\ge0$ and $b\ge1$, we get an exact sequence in the lower left, which gives rise to an isomorphism in the lower right.
$$
0\to k^{\oplus a}\to F_s\to F_{s-1}\to k^{\oplus b}\to0,\qquad
k^{\oplus a}\cong\syz^2(k^{\oplus b})\oplus R^{\oplus h}=(\syz^2k)^{\oplus b}\oplus R^{\oplus h}\quad(h\ge0).
$$
The isomorphism in particular shows that $\m$ kills $\syz^2k$.
Write $\syz^2k=k^{\oplus c}$ with $c\ge0$, and we get an exact sequence $0\to k^{\oplus c}\to R^{\oplus e}\xrightarrow{\pi}\m\to0$, where we set $e=\edim R$.
If $c=0$, then $R$ is a discrete valuation ring, and we are done.
If $e=1$, then $R$ is an artinian hypersurface, and again we are done.
So let $c\ge1$ and $e\ge2$.
Let $x_1,\dots,x_e$ be a minimal system of generators of $\m$, and put $G=(0\to R^{\oplus e}\xrightarrow{\left(\begin{smallmatrix}x_1&\cdots&x_e\end{smallmatrix}\right)}R\to0)$.
Then $\H_1(G)=k^{\oplus c}\ne0$ and $\H_0(G)=k$.
Theorem \ref{AB} shows $\depth R=\depth R-\depth k=\qpd_Rk\le1-1=0$.
Put $r=\r(R)=\dim_k(\soc R)>0$.
Applying $\Hom_R(k,-)$ to the exact sequence $0\to k^{\oplus c}\to R^{\oplus e}\xrightarrow{\left(\begin{smallmatrix}x_1&\cdots&x_e\end{smallmatrix}\right)}R$ gives rise to an exact sequence
$$
0\to \Hom_R(k,k^{\oplus c})\to \Hom_R(k,R^{\oplus e})\xrightarrow{\left(\begin{smallmatrix}x_1&\cdots&x_e\end{smallmatrix}\right)=0}\Hom_R(k,R).
$$
Hence $k^{\oplus c}\cong\Hom_R(k,k^{\oplus c})\cong\Hom_R(k,R^{\oplus e})\cong\Hom_R(k,R)^{\oplus e}\cong k^{\oplus re}$, and we get $c=re$.
Note here that the map $\pi:R^{\oplus e}\to\m$ factors through $(R/\soc R)^{\oplus e}$.
This yields a commutative diagram
$$
\xymatrix@R-1pc@C3pc{
& 0\ar[d] & 0\ar[d]\\
& (\soc R)^{\oplus e}\ar@{=}[r]\ar[d] & (\soc R)^{\oplus e}\ar[d]\\
0\ar[r] & k^{\oplus re}\ar[r]\ar[d] & R^{\oplus e}\ar[r]^\pi\ar[d] & \m\ar[r]\ar@{=}[d] & 0\\
0\ar[r] & L\ar[r]\ar[d] & (R/\soc R)^{\oplus e}\ar[r]^-{\overline\pi}\ar[d] & \m\ar[r] & 0\\
& 0 & 0
}
$$
with exact rows and columns.
The left column yields equalities $\ell_R(L)=\ell_R(k^{\oplus re})-\ell_R((\soc R)^{\oplus e})=re-re=0$, which means $L=0$.
Thus the map $\overline\pi:(R/\soc R)^{\oplus e}\to\m$ is an isomorphism.
Note that this map sends an element $(\overline{a_1},\dots,\overline{a_e})\in(R/\soc R)^{\oplus e}$ to the element $\sum_{i=1}^ea_ix_i\in\m$.
It is observed that $\m=(x_1,\dots,x_e)=(x_1)\oplus\cdots\oplus(x_e)$ and $R/\soc R\cong(x_i)$ for each $1\le i\le e$.
Taking the annihilators, we get $\soc R=(0:x_i)$.
Recall that $e\ge2$.
Fix two integers $1\le i,j\le e$ with $i\ne j$.
Then $x_ix_j\in(x_i)\cap(x_j)=0$, and $x_j\in(0:x_i)=\soc R$.
It follows that the maximal ideal $\m=(x_1,\dots,x_e)$ is contained in $\soc R$, which means $\m^2=0$.
Thus, the proof of the theorem is completed.
\end{proof}

We close this section with some natural questions.

\begin{ques}
\begin{enumerate}[(1)]
\item
Let $(R,\m,k)$ be a local ring. Is it true $\finpql R<\infty$\,?
\item
Is there an example of a local ring $(R,\m,k)$ such that $\qpl_Rk<\finpql R$\,?
\end{enumerate}
\end{ques}

\section{Vanishing of Tor and Ext}

In this section we prove some results about vanishing of Ext and Tor for modules of finite quasi-projective dimension.
The results are reminiscent of what holds for modules over a complete intersection.
We begin with the following general remark on a spectral sequence.

\begin{rem}\label{r1}
Consider a convergent spectral sequence $\E_{p,q}^2\Rightarrow\H_{p+q}$.
Let $a,b,n,l$ be integers such that $l\ge b-a$.
Suppose $\E^2_{p,q}=0$ if $q<a$ or $q>b$ or $n\le p\le n+l$.
Then $\E_{n+l+1,a}^\infty\cong\E_{n+l+1,a}^2$.
Indeed, as the differentials $d^k_{p,q}$ are of bidegree $(-k,k-1)$, it follows that all the maps $d^k_{p,q}$ into and out of $\E^k_{n+l+1,a}$ are zero, for all $k\geq 2$.
This is easy to see if $n\le n+l+1-k$, so let $n>n+l+1-k$.
Then $a+k-1>a+l\ge b$.
\end{rem}

Now we can prove the following theorem.

\begin{thm}\label{main1}
Let $M,N$ be $R$-modules and assume $M$ admits a finite quasi-projective resolution $P$.
Set
\begin{align*}
l=&\sup\{\hsup P-\hinf P,\ \hsup(P\otimes_RN)-\hinf P-1 \}\quad\text{and}\\
m=&\sup\{\hsup P-\hinf P,\  -\hinf(\Hom_R(P,N))-\hinf P-1\}
\end{align*}
Then for an integer $n\geq 1$ the following hold.
\begin{enumerate}[\rm(1)]
\item If {$\Tor^R_i(M,N)=0$ for all $n\leq i \leq n+l$}, then {$\Tor^R_i(M,N)=0$} for all $i\geq n$.
\item If {$\Ext^i_R(M,N)=0$ for all $n\leq i \leq n+m$}, then {$\Ext^i_R(M,N)=0$} for all $i\geq n$.
\item If {$\Tor^R_{\gg0}(M,N)=0$}, then {$\Tor^R_i(M,N)=0$} for all $i>\hsup(P\otimes_RN)-\hsup P$.
\item If {$\Ext^{\gg 0}_R(M,N)=0$}, then {$\Ext^i_R(M,N)=0$} for all $i>-\hinf(\Hom_R(P,N))-\hsup P$.
\end{enumerate}
\end{thm}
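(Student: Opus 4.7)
The plan rests on the two spectral sequences of Lemma \ref{spseq} combined with Remark \ref{r1}. Set $a=\hinf P$ and $b=\hsup P$, and write $\H_q(P)\cong M^{\oplus a_q}$ with $a_q\ge 0$, $a_q=0$ for $q\notin[a,b]$, and $a_a,a_b>0$. The pages read $\E^2_{p,q}=\Tor^R_p(M,N)^{\oplus a_q}$ and $\E_2^{p,q}=\Ext^p_R(M,N)^{\oplus a_q}$, so both vanish outside the horizontal strip $q\in[a,b]$; under the Tor (resp.\ Ext) vanishing hypotheses of the theorem, they also vanish throughout the vertical strip $n\le p\le n+l$ (resp.\ $n\le p\le n+m$).

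For part (3), I argue by contradiction. Assume some $\Tor^R_i(M,N)\ne 0$ with $i>\hsup(P\otimes_RN)-b$, and pick $i_0$ maximal with this property (possible since $\Tor^R_{\gg 0}(M,N)=0$). At position $(i_0,b)$ of the Tor spectral sequence, every outgoing differential $d^k\colon\E^k_{i_0,b}\to\E^k_{i_0-k,b+k-1}$ lands at $q$-coordinate $b+k-1>b$ and hence at zero; every incoming differential originating at $(i_0+k,b-k+1)$ vanishes because its $p$-coordinate $i_0+k>i_0$ and maximality give $\Tor^R_{i_0+k}(M,N)=0$. Therefore $\E^\infty_{i_0,b}=\E^2_{i_0,b}=\Tor^R_{i_0}(M,N)^{\oplus a_b}\ne 0$, but this is a subquotient of $\H_{i_0+b}(P\otimes_RN)=0$ since $i_0+b>\hsup(P\otimes_RN)$---a contradiction. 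Part (4) follows by the cohomological analogue of the same argument applied at the position $(i_0,b)$ of the Ext spectral sequence.

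For part (1), I will invoke Remark \ref{r1} once and iterate. With $n\ge 1$ and $l\ge b-a$, the Tor-page satisfies the hypotheses of the remark, yielding $\E^\infty_{n+l+1,a}\cong\E^2_{n+l+1,a}=\Tor^R_{n+l+1}(M,N)^{\oplus a_a}$. Since $l\ge\hsup(P\otimes_RN)-a-1$ and $n\ge 1$ force $n+l+1+a>\hsup(P\otimes_RN)$, the abutment $\H_{n+l+1+a}(P\otimes_RN)=0$, and $a_a>0$ gives $\Tor^R_{n+l+1}(M,N)=0$. The vanishing window now extends to $[n,n+l+1]$; reapplying the argument with shifted window $[n+1,n+l+1]$ kills $\Tor^R_{n+l+2}(M,N)$, and iteration completes the proof. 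Part (2) is proved identically, using the cohomological analogue of Remark \ref{r1} at position $(n+m+1,a)$: differentials $d_r$ of bidegree $(r,1-r)$ send outgoing arrows to $q=a+1-r<a$ (hence zero), while incoming arrows from $(n+m+1-r,a+r-1)$ vanish because either $r\in[2,m+1]$ forces $p=n+m+1-r\in[n,n+m]$ into the Ext vanishing strip or $r\ge m+2\ge b-a+2$ forces $q=a+r-1>b$ outside the horizontal strip.

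The main technical obstacle will be cleanly setting up the cohomological analogue of Remark \ref{r1} and unwinding the convention that $\Hom_R(P,N)$ is treated as a chain complex (say via $\Hom_R(P,N)_n=\Hom_R(P_{-n},N)$), so that $-\hinf(\Hom_R(P,N))$ equals the largest cohomological degree in which $\H^\bullet(\Hom_R(P,N))$ is nonzero. This identity is precisely what reconciles the abutment of the Ext spectral sequence with the bounds appearing in (2) and (4), and checking that the arithmetic in the definition of $m$ is tight enough in the base case $n=1$ is the finicky but routine step.
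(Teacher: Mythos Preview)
Your proposal is correct and follows essentially the same approach as the paper: both arguments rest on the spectral sequences of Lemma \ref{spseq} together with Remark \ref{r1}, isolating the corner terms $\E^2_{n+l+1,a}$ (for (1)) and $\E^2_{p,b}$ with $p$ maximal (for (3)), then iterating. Your contradiction phrasing of (3) and your explicit write-up of the cohomological analogue for (2) are minor expository variations---the paper argues (3) directly by taking $p=\sup\{i\mid\Tor^R_i(M,N)\ne0\}$ and simply says ``similar arguments'' for (2) and (4).
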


\begin{proof}
We may assume $M$ and $N$ are non-zero. First we prove (1) and (3).
By Lemma \ref{spseq} there exists a convergent spectral sequence
$$
\E^2_{p,q}\cong\Tor^R_p(\H_q(P),N)\Longrightarrow \H_{p+q}(P\otimes_R N).
$$

(1) Put $a=\hinf P$ and $b=\hsup P$.
Then $\H_q(P)=0$ for $q<a$ or $q>b$.
Also, $\Tor^R_i(M,N)=0$ for $n\leq i \leq n+l$ and $\H_q(P)\cong M^{\oplus c_q}$ for all $q$, $c_q\ge 0$.
Applying Remark \ref{r1}, we obtain isomorphisms $\E^\infty_{n+l+1,a}\cong \E^2_{n+l+1,a}\cong \Tor^R_{n+l+1}(\H_{a}(P),N)$.
Hence $\Tor^R_{n+l+1}(\H_{a}(P),N)$ is a subquotient of $\H_{n+l+1+a}(P\otimes N)$, which is zero since $n+l+1+a>\hsup(P\otimes_RN)$. Therefore $\Tor^R_{n+l+1}(M,N)$=0, as $\H_{a}(P)\cong M^{\oplus {c_{a}}}\ne 0$.
Proceeding in this way, we get $\Tor^R_i(M,N)=0$ for all $i\geq n$.

(3) Set $p=\sup\{i\mid\Tor^R_i(M,N)\ne0\}$. Let $q=\hsup P$.
As $\H_q(P)$ is non-zero and isomorphic to a direct sum of copies of $M$, we have 
$\E^2_{p,q}\cong\Tor^R_p(\H_q(P),N)\neq 0$.
Hence $\E^2_{p,q}$ is the most right and top nonzero point in the $\E^2$ plane and therefore 
$\E^\infty_{p,q}\cong \E^2_{p,q}$.
Thus $\Tor^R_p(\H_q(P),N)$ is a subquotient of $\H_{p+q}(P\otimes_R N)$ and hence 
$\H_{p+q}(P\otimes_R N)\neq 0$. Therefore $p+q\leq \hsup(P\otimes_R N)$ and so 
$p\leq \hsup(P\otimes_RN)-q$.

To see (2) and (4),  note that by Lemma \ref{spseq} there exists a convergent spectral sequence
$$\E^{p,q}_2\cong \Ext^p_R(\H_q(P),N) \Longrightarrow \H^{p+q}\Hom_R(P,N).$$
Now, similar arguments as in (1) and (3) work for (2) and (4) as well.
\end{proof}

The following example shows that one cannot in general assume fewer than $l+1$ consecutive 
vanishing $\Tor^R_i(M,N)$ in Theorem \ref{main1}(1).

\begin{exam} Let $R=k[x,y]/(xy)$ with $k$ a field.
Let $M=R/(x)$ and $N=R/(y)$. The complex $P=(0\to R \xrightarrow{x} R \to 0)$, with $\inf P=0$, is a quasi-projective resolution of $M$. Since $\H_1(P)\cong M$, one has $\hsup P=1$, and so
$\hsup P-\hinf P=1-0=1$. On the other hand, since $x$ is regular on $N$, we have 
$\hsup(P\otimes_RN)=0$, and so $\hsup(P\otimes_RN)-\hinf P-1=0-0-1=-1$. 
Therefore $l=\sup\{1,-1\}=1$. One checks easily that $\Tor^R_i(M,N)=0$ for $i$ odd, 
while $\Tor^R_i(M,N)\neq 0$ for $i$ even. Therefore one vanishing Tor does not force eventually all vanishing Tor. 
\end{exam}

Here is an immediate application of Theorem \ref{main1}.

\begin{cor}\label{17}
Let $M,N$ be $R$-modules, and $n$ be a positive integer.
If $\Tor_i^R(M,N)=0$ (resp. $\Ext_R^i(M,N)=0$) for all $n\le i\le n+\qpl_RM$, then $\Tor_i^R(M,N)=0$ (resp. $\Ext_R^i(M,N)=0$) for all $i\ge \min\{n, \qpd_RM+1\}$.
\end{cor}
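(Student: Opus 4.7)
The plan is to invoke Theorem \ref{main1} twice, with two different quasi-projective resolutions of $M$: one realizing $\qpl_RM$ to handle the hypothesis, and a second realizing $\qpd_RM$ to extend the vanishing further back when $n>\qpd_RM+1$. The case $\qpl_RM=\infty$ is trivial, since then $\qpd_RM=\infty$ by Remark \ref{19}(1b), so $\min\{n,\qpd_RM+1\}=n$ and the hypothesis coincides with the conclusion. So assume $\qpl_RM<\infty$.

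First I would pick a finite quasi-projective resolution $P$ of $M$ with $\sup P-\inf P=\qpl_RM$, possible since $\qpl_RM$ is an infimum of non-negative integers. A short argument using Remark \ref{8}(4) forces $\hinf P=\inf P$: if $\H_{\inf P}(P)=0$, the truncation produced there yields a strictly shorter quasi-projective resolution of $M$, contradicting the minimality of $\qpl_RM$. Combining this with the trivial inequalities $\hsup P\le\sup P$ and $\hsup(P\otimes_RN)\le\sup P$ shows that the integer $l$ appearing in Theorem \ref{main1}(1) satisfies $l\le\qpl_RM$. The hypothesis then supplies $\Tor_i^R(M,N)=0$ for all $n\le i\le n+l$, and Theorem \ref{main1}(1) yields $\Tor_i^R(M,N)=0$ for every $i\ge n$. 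This already finishes the subcase $n\le\qpd_RM+1$. If instead $n\ge\qpd_RM+2$, we in particular have $\Tor_{\gg 0}^R(M,N)=0$; choosing a second finite quasi-projective resolution $P'$ of $M$ with $\sup P'-\hsup P'=\qpd_RM$ and applying Theorem \ref{main1}(3) gives $\Tor_i^R(M,N)=0$ whenever $i>\hsup(P'\otimes_RN)-\hsup P'$, a quantity bounded above by $\sup P'-\hsup P'=\qpd_RM$. This completes the Tor case.

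The Ext case runs in parallel, using parts (2) and (4) of Theorem \ref{main1} in place of (1) and (3). The extra ingredient is the observation that $\Hom_R(P,N)$ is concentrated in cohomological degrees between $-\sup P$ and $-\inf P$, so $-\hinf(\Hom_R(P,N))\le\sup P$; this yields $m\le\qpl_RM$ for the first resolution and $-\hinf(\Hom_R(P',N))-\hsup P'\le\qpd_RM$ for the second, after which the argument concludes exactly as before. The main (minor) obstacle is the bookkeeping step showing $\hinf P=\inf P$ via Remark \ref{8}(4); once this is in hand every remaining estimate is a routine comparison of supports of the complexes involved.
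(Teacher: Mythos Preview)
Your proof is correct and follows essentially the same two-step approach as the paper: apply Theorem \ref{main1}(1),(2) with a resolution $P$ realizing $\qpl_RM$ to get vanishing for $i\ge n$, then apply Theorem \ref{main1}(3),(4) with a resolution $P'$ realizing $\qpd_RM$ to push the vanishing down to $i>\qpd_RM$. The only minor difference is that you take an extra detour through Remark \ref{8}(4) to establish $\hinf P=\inf P$; the paper simply uses the trivial inequality $\hinf P\ge\inf P$, which already yields $\hsup P-\hinf P\le\sup P-\inf P=\qpl_RM$ and the analogous bounds, so the truncation argument is unnecessary (though not wrong).
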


\begin{proof}
We may assume $\qpl_RM<\infty$. Let $P$ quasi-projective resolution of $M$ with 
$\qpl_RM=\sup P-\inf P$.
It is clear that 
$\qpl_RM\ge\sup\{\hsup P-\hinf P,\ \hsup(P\otimes N)-\hinf P-1,\,-\hinf\Hom(P,N)-\hinf P-1\}$.
It follows from (1) and (2) of Theorem \ref{main1} that $\Tor_i^R(M,N)=0$ (resp. $\Ext_R^i(M,N)=0$) for all $i\geq n$.

Next, choose a quasi-projective resolution $P'$ of $M$ with $\qpd_RM=\sup P'-\hsup P'$.
Then note that $\qpd_RM=\sup P'-\hsup P'\ge\hsup(P'\otimes N)-\hsup P'$
and $\qpd_RM=\sup P'-\hsup P'\ge-\hinf\Hom(P',N)-\hsup P'$.
It follows from (3) and (4) of Theorem \ref{main1} that $\Tor_i^R(M,N)=0$ (resp. $\Ext_R^i(M,N)=0$) for all $i>\qpd_RM$.
\end{proof}

Recall that a Gorenstein local ring $R$ is called {\em AB} if for all finitely generated $R$-modules $M$ and $N$ with $\Ext^i_R(M,N)=0$ for all $i\gg0$ one has $\Ext^i_R(M,N)=0$ for all $i > \dim R$.
Now we give a result which supports Question \ref{q3}.
It should also be noted that the techniques in the proofs of \cite[Theorem VII]{DGI} and \cite[Theorem 5.2]{P} are quite different from ours, and none of them works for our result.
For an $R$-module $M$ we denote by $\tr M$ the (Auslander) transpose of $M$.

\begin{thm}\label{t}
Let $(R,\m,k)$ be a local ring of depth $t$.
If $\tr\syz^tk$ has finite quasi-projective dimension, then $R$ is Gorenstein.
In particular, a local ring over which every finitely generated module has finite quasi-projective dimension is an AB ring.
\end{thm}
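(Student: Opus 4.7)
The plan is to show that $L := \syz^t k$ is totally reflexive; once established, $\Gdim_R k \le t < \infty$, and the Auslander--Bridger equality $\Gdim_R k = \depth R - \depth_R k = t$ forces $R$ to be Gorenstein.

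Set $N := \tr L$. By Proposition \ref{16}, fix a minimal finite quasi-projective resolution $F$ of $N$ realizing $\qpd_R N = \sup F - \hsup F$, and write $\H_q(F) \cong N^{\oplus a_q}$. Lemma \ref{spseq} provides the convergent spectral sequence
\[
E_2^{p,q} = \Ext_R^p(N, R)^{\oplus a_q} \ \Longrightarrow\ \H^{p+q}(\Hom_R(F, R)),
\]
and the classical Auslander--Bridger identifications
\[
\Hom_R(N, R) \cong \syz^2 L,\quad \Ext_R^1(N, R) \cong \Ker(L \to L^{**}),\quad \Ext_R^2(N, R) \cong \Coker(L \to L^{**}),
\]
together with $\Ext_R^{i+2}(N, R) \cong \Ext_R^i(L^*, R)$ for $i \ge 1$, express the entire $E_2$ page in terms of data attached to $L$, $L^*$, and $L^{**}$.

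Because $L = \syz^t k$ is a $t$-th syzygy of $k$, the Auslander--Bridger characterization of $n$-torsion-free modules gives the initial window of vanishing $\Ext_R^i(N, R) = 0$ for $1 \le i \le t$. Feeding this into Theorem \ref{main1}(2) applied to $M = N$, target $R$, and resolution $F$, one verifies that the quantity
\[
m = \sup\{\hsup F - \hinf F,\ -\hinf \Hom_R(F, R) - \hinf F - 1\}
\]
satisfies $m \le t-1$, using Theorem \ref{AB} to control $\qpd_R N = t - \depth_R N \le t$ and the minimality of $F$ to bound the homology range. The initial window is then long enough to extend the vanishing to $\Ext_R^i(N, R) = 0$ for all $i \ge 1$. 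Reading off the Auslander--Bridger identities immediately yields
\[
L \cong L^{**} \quad \text{and} \quad \Ext_R^i(L^*, R) = 0 \text{ for all } i \ge 1.
\]

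With $\Ext_R^{\ge 1}(N, R) = 0$, the spectral sequence collapses to its $p = 0$ column, so $\Hom_R(F, R)$ (suitably re-indexed) is a finite quasi-projective resolution of $N^* \cong \syz^2 L = \syz^{t+2} k$. Re-running the first half of the argument on this higher syzygy --- which as a $(t+2)$-th syzygy of $k$ enjoys an even longer initial window of Ext-vanishing --- and translating back through the transpose duality
\[
\Ext_R^{i+2}(\tr L^*, R) \cong \Ext_R^i(L^{**}, R) = \Ext_R^i(L, R) \quad (i \ge 1),
\]
produces the remaining vanishing $\Ext_R^i(L, R) = 0$ for $i \ge 1$. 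Combined with $L \cong L^{**}$ and $\Ext_R^{\ge 1}(L^*, R) = 0$, this says $L$ is totally reflexive, so $R$ is Gorenstein.

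For the ``in particular'' statement, assume every finitely generated $R$-module has finite quasi-projective dimension. The first part gives that $R$ is Gorenstein. For any finitely generated $M, N$ with $\Ext_R^i(M, N) = 0$ for $i \gg 0$, Corollary \ref{17} yields $\Ext_R^i(M, N) = 0$ for all $i > \qpd_R M$; by Theorem \ref{AB}, $\qpd_R M = \depth R - \depth_R M \le \depth R = \dim R$, so $\Ext_R^i(M, N) = 0$ for all $i > \dim R$, which is exactly the AB condition.

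The main obstacle will be the iteration step: one application of the spectral-sequence-plus-rigidity machinery immediately delivers $L \cong L^{**}$ and $\Ext_R^{\ge 1}(L^*, R) = 0$, but extracting the parallel vanishing $\Ext_R^{\ge 1}(L, R) = 0$ forces one to re-run the argument on the higher syzygy $\syz^{t+2} k$ and carefully track the transpose identifications to close the loop on total reflexivity.
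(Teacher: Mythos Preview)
There is a genuine gap in your argument at the step where you claim $m \le t-1$. Recall that
\[
m \;\ge\; \hsup F - \hinf F,
\]
and this \emph{homological length} of the quasi-projective resolution is not the same as $\qpd_R N = \sup F - \hsup F$. Assuming (as one may, by Remark~\ref{8}(4)) that $\hinf F = \inf F$, one has
\[
\hsup F - \hinf F \;=\; (\sup F - \inf F) - (\sup F - \hsup F) \;=\; \len F - \qpd_R N,
\]
and nothing in the hypotheses bounds $\len F$. Theorem~\ref{AB} controls $\qpd_R N \le t$, but the minimality of $F$ gives no control over $\len F$ or the homology range; in particular, for $t=0$ you would need $m \le -1$, which is impossible since $m \ge 0$, and already for $t=1$ you would be forcing $\hsup F = \hinf F$, i.e.\ $\pd_R N < \infty$. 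So the ``initial window'' of length $t$ is simply not long enough to trigger Theorem~\ref{main1}(2). A secondary concern is that the initial vanishing $\Ext^i_R(\tr\syz^t k, R)=0$ for $1\le i\le t$ amounts to $\syz^t k$ being $t$-torsionfree, and over an arbitrary local ring a $t$-th syzygy need not be $t$-torsionfree; you would have to argue this separately.

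The paper's proof bypasses all of this with a single observation: from any finite quasi-projective resolution $(P,\partial)$ of $T=\tr\syz^t k$ with $h=\hsup P$, the module $C=\Coker\partial_{h+1}$ has finite projective dimension, and the inclusion $\H_h(P)\hookrightarrow C$ embeds a nonzero direct sum of copies of $T$ (hence $T$ itself) into a module of finite projective dimension. One then invokes \cite[Theorem~1.3]{fpd}, which says precisely that such an embedding forces $R$ to be Gorenstein. Your argument for the ``in particular'' clause, on the other hand, matches the paper's and is correct.
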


\begin{proof}
Set $T=\tr\syz^tk$.
Let $(P,\partial)$ be a finite quasi-projective resolution of $T$.  Let $h=\hsup P$ and $C=\Coker\partial_{h+1}$. We have $\pd_RC<\infty$ and there is a monomorphism $\H_h(P)\hookrightarrow C$. Since $\H_h(P)$ is a nonzero direct sum of copies of $T$, we get a monomorphism $T\hookrightarrow C$.
By \cite[Theorem 1.3]{fpd} $R$ is Gorenstein.

If every finitely generated  $R$-module has finite quasi-projective dimension, then it follows from Corollary \ref{17} and Theorem \ref{AB} that $R$ is AB.
\end{proof}

The example below says the converse of the second assertion of Theorem \ref{t} does not hold in general.

\begin{exam}\label{ex}
Let $A=k[x,y]/(x^2,xy,y^2)$ with $k$ a field, and $\E_A(k)$ denote the injective hull of $k$. Consider the trivial extension $R=A\ltimes\E_A(k)$.
The $A$-module $\E_A(k)$ has a minimal free presentation  $A^{\oplus3}\xrightarrow{\left(\begin{smallmatrix}x&0&y\\0&y&-x\end{smallmatrix}\right)}A^{\oplus2}\to \E_A(k)\to0$. From this
we easily see that $R\cong k[x,y,u,v]/(x^2,xy,y^2,xu,yv,xv-yu,u^2,uv,v^2)$.
Note that $R$ is an artinian Gorenstein local ring with $\edim R=4$ and $\dim_kR=6$.
It follows from \cite[Theorem 3.5]{HJ} or \cite[Theorem 3.4]{S} that $R$ is AB.
However, the $R$-module $A$ via the natural surjection $R\twoheadrightarrow A$ is not virtually small by \cite[Example 9.13]{DGI}.
This $R$-module does not have finite quasi-projective dimension by Proposition \ref{20}.
\end{exam}

For the convenience of the reader, we summarize some results of this paper in the following remark.

\begin{rem}
Let $R$ be a local ring.
Consider the following conditions.
\begin{enumerate}[(1)]
\item
The ring $R$ is a complete intersection.
\item
Every finitely generated $R$-module has finite quasi-projective dimension.
\item
The ring $R$ is AB.
\item
The ring $R$ is Gorenstein.
\item
Every nonexact bounded complex of finitely generated $R$-modules is virtually small.
\item
Every nonzero finitely generated $R$-module is virtually small.
\end{enumerate}
Then (1) $\Rightarrow$ (2) holds if $R$ is a quotient of a regular ring by Corollary \ref{36}, while Question \ref{q} asks whether (1) $\Leftrightarrow$ (2) always holds.
Theorem \ref{t} yields (2) $\Rightarrow$ (3).
The implication (3) $\Rightarrow$ (4) holds by definition, while the opposite one (4) $\Rightarrow$ (3) does not hold in general by \cite[Theorem]{JS}.
It follows from \cite[Theorem 5.2]{P} that (1) $\Leftrightarrow$ (5) holds, while (5) $\Rightarrow$ (6) is evident.
Proposition \ref{20} gives (2) $\Rightarrow$ (6).
Example \ref{ex} shows that (3) $\Rightarrow$ (6) does not necessarily hold, and hence (3) $\Rightarrow$ (2) is not always true, either.
\end{rem}

For a module $M$ over a local ring $R$, we denote by $\cx_RM$  the {\em complexity} of $M$. The details can be found in \cite{Av,AGP}.
It is worth reproving the following well-known result as an application of Theorem \ref{main1}.

\begin{cor}\label{lbnd}
Let $R$ be a local ring, and let $M$ be a finitely generated $R$-module.
Put $c=\cx_RM+\Cdim_RM$, and let $N$ be an $R$-module and $n\geq 1$ be an integer.
If $\Tor^R_i(M,N)=0$ (resp. $\Ext^i_R(M,N)=0$) for all $n\leq i \leq n+c$, then $\Tor^R_i(M,N)=0$ (resp. $\Ext^i_R(M,N)=0$) for all $i>\Cdim_R M$.
\end{cor}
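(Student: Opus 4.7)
The plan is to reduce the statement to Corollary \ref{17} after base-changing along a suitable quasi-deformation. We may assume $\Cdim_RM<\infty$, for otherwise $c=\infty$ and there is nothing to prove.

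First I would invoke the theory of complete intersection dimension from \cite{AGP} to choose a quasi-deformation $R\to R'\gets Q$ in which the codimension $c'$ of $Q\to R'$ equals the complexity $\cx_RM$. Combining the Auslander--Buchsbaum formula over $Q$ with the standard depth formulas $\depth Q=\depth R'+c'$ (for the $Q$-regular sequence $\ker(Q\to R')$) and $\depth R'-\depth_{R'}(M\otimes_RR')=\depth R-\depth_RM$ (for the flat local map $R\to R'$), I would derive
\[
\pd_Q(M\otimes_RR')=c'+\Cdim_RM=\cx_RM+\Cdim_RM=c.
\]
Proposition \ref{qperfect} then yields $\qpl_{R'}(M\otimes_RR')\le c$, and Theorem \ref{AB} combined with the same depth computation identifies $\qpd_{R'}(M\otimes_RR')=\Cdim_RM$.

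Next, I would transfer the vanishing hypothesis along $R\to R'$. Since $R\to R'$ is flat and $M$ is finitely generated over the Noetherian ring $R$, both base-change isomorphisms $\Tor^R_i(M,N)\otimes_RR'\cong\Tor^{R'}_i(M\otimes_RR',N\otimes_RR')$ and $\Ext^i_R(M,N)\otimes_RR'\cong\Ext^i_{R'}(M\otimes_RR',N\otimes_RR')$ apply. In particular, the assumed vanishing in the range $n\le i\le n+c$ transports to $R'$, so Corollary \ref{17} applies to $M'=M\otimes_RR'$ and $N'=N\otimes_RR'$ (noting $c\ge\qpl_{R'}M'$) and delivers vanishing for all
\[
i\ge\min\{n,\,\qpd_{R'}M'+1\}=\min\{n,\,\Cdim_RM+1\}\le\Cdim_RM+1,
\]
i.e.\ for all $i>\Cdim_RM$. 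Finally, since a flat local homomorphism of local rings is automatically faithfully flat, this vanishing descends to $R$.

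The main obstacle I anticipate is the bookkeeping around the two Auslander--Buchsbaum formulas: one must pin down $\pd_Q(M\otimes_RR')$ exactly (not merely bound it from above) in order to control $\qpl_{R'}(M\otimes_RR')$ by $c$ rather than by something larger. This relies on selecting a quasi-deformation from \cite{AGP} with the minimal codimension $c'=\cx_RM$, together with the depth-change formula for the flat local map $R\to R'$.
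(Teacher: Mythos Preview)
Your argument is correct and follows essentially the same route as the paper's: both invoke \cite[Theorem~5.10]{AGP} to produce a quasi-deformation with $\pd_Q(M\otimes_RR')=c$, push a length-$c$ free $Q$-resolution down to a quasi-projective resolution over $R'$ via Proposition~\ref{qperfect}, and then apply Corollary~\ref{17} together with Theorem~\ref{AB} to obtain vanishing beyond $\Cdim_RM$. The only cosmetic differences are that the paper compresses the faithfully-flat descent into the phrase ``we may assume $R=R'$'' and cites Theorem~\ref{main1}(1) alongside Corollary~\ref{17}, whereas you spell out the descent explicitly and go straight to Corollary~\ref{17}.
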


\begin{proof}
We only prove the result for Tor; a similar argument applies for Ext.
We may assume $\Cdim M<\infty$.
By \cite[Theorem 5.10]{AGP} there is a quasi-deformation $R\to R'\leftarrow Q $ with $\pd_QM'=c$, where $M'=M\otimes_RR'$. We may assume $R=R'$.
Let $P$ be a minimal free $Q$-resolution of $M$.
Then $P':=P\otimes_QR$ is a quasi-projective resolution of $M$ by Proposition \ref{qperfect}.
Since $\H_i(P')\cong\Tor^Q_i(M,R)$ and $\H_i(P'\otimes_RN)\cong \Tor^Q_i(M,N)$, we have $c\ge\hsup P'-\hinf P'$ and $c\ge\hsup(P'\otimes_RN)-\hinf P'-1$.
As $\Cdim_RM=\depth R-\depth M$, the result follows by Theorems \ref{main1}(1), \ref{AB} and Corollary \ref{17}.
\end{proof}

In the following, we recall definition of Gorenstein dimension which we use in the rest of this section.

\begin{defn}
Let $M$ be a finitely generated $R$-module.
\begin{enumerate}[(1)]
\item
We call $M$ \emph{totally reflexive} if the natural homomorphism $M\to M^{**}$ is an isomorphism and $\Ext_R^i(M,R)=0=\Ext_R^i(M^*,R)$ for all $i>0$.
The infimum of nonnegative integers $n$ such that there exists an exact sequence $0 \to G_{n} \to \dots  \to G_{0} \to M\to 0$ with each $G_{i}$ totally reflexive, is called the \emph{Gorenstein dimension} (or {\em G-dimension}) of $M$, and we write $\Gdim_RM=n$.
\item
Suppose that $M$ has finite G-dimension.
\begin{enumerate}[(a)]
\item
A \emph{complete resolution} (or \emph{Tate resolution}) of $M$ over $R$ is a diagram $T\xrightarrow{\nu}P\xrightarrow{\pi}M$, where $P$ is a projective resolution of $M$ in $\mod R$, and $(T,\partial^T)$ is an exact complex of projective modules in $\mod R$ such that $T^*$ is exact and $\nu$ is a chain map with $\nu_i$ bijective for all $i\gg0$.
Note that a complete resolution of $M$ exists if and only if $\Gdim_RM<\infty$; see \cite[Theorem 3.1]{AM}.
For each $i<0$ we set $\syz^iM=\Im(\partial^T_i)$, which is called the $(-i)$th {\em cosyzygy} of $M$.
\item
Let $N$ be an $R$-module (not necessarily finitely generated).
The $i$th \emph{Tate homology} (respectively, \emph{Tate cohomology}) of $M$ and $N$ is defined by $\ctor^R_i(M,N)=\H_i(T\otimes_RN)$ (respectively, $\cext^i_R(M,N)=\H^i(\Hom_R(T,N))$).
\end{enumerate}
\end{enumerate}
\end{defn}

\begin{rem}\label{21}
\begin{enumerate}[(1)]
\item We recall that when $R$ is local and $\Gdim_RM<\infty$, then $\Gdim_RM=\depth R-\depth M$.  Every finitely generated module over a Gorenstein ring has finite G-dimension.  Thus a finitely generated module over a Gorenstein ring is totally reflexive if and only if it is maximal Cohen--Macaulay. 
\item Let $M$ be an $R$-module of finite G-dimension.
Let $T\to P\to M$ be a complete resolution of $M$.
Then one can easily get a complete resolution $T[-1]\to P'\to\syz M$ of $\syz M$, where $P'=(\cdots\to P_2\to P_1\to0)$ is a truncated complex with $P_1$ in degree $0$.
Hence $\ctor_i^R(\syz M,N)\cong\ctor_{i+1}^R(M,N)$ and $\cext_R^i(\syz M,N)\cong\cext_R^{i+1}(M,N)$ for all $i\in\Z$ and $N\in\Mod R$.
We refer the reader to \cite{AM} for more details and properties of complete resolutions and Tate (co)homology.
\end{enumerate}
\end{rem}

\begin{lem}\label{39}
Let $M$ and $N$ be non-zero $R$-modules with $\Gdim_RM<\infty$ and $\qpd_RN<\infty$.
Let $(P,\partial)$ be a finite quasi-projective resolution of $N$.
Let $n>\Gdim_RM$ and $t=\hsup P-\hinf P$.
\begin{enumerate}[\rm(1)]
\item
If $\Tor_i^R(M,N)$ (resp. $\Ext_R^i(M,N)$) vanishes for all $n\le i\le n+t-1$, then it vanishes for $i=n+t+1$.
\item
If $\Tor_i^R(M,N)$ (resp. $\Ext_R^i(M,N)$) vanishes for all $n+2\le i\le n+t+1$, then it vanishes for $i=n$.
\end{enumerate}
\end{lem}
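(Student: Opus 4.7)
The plan is to build a single spectral sequence converging to zero whose $\E^2$-page displays the (Tate) Tor or Ext groups we want to make vanish, and then to locate a strategic entry on which every differential is forced to be zero. The input is the Tate resolution of $M$ (which exists because $\Gdim_RM<\infty$) combined with the finite quasi-projective resolution $P$ of $N$.

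First I would invoke Proposition \ref{14} to replace $P$ by a perfect complex, and fix a complete resolution $T\to Q\to M$; then $T$ and $T^*$ are exact complexes of finitely generated projectives, and the standard comparison gives $\ctor^R_i(M,-)\cong\Tor^R_i(M,-)$ and $\cext^i_R(M,-)\cong\Ext^i_R(M,-)$ for every $i>\Gdim_RM$. For the Tor half, form the bicomplex $T\otimes_R P$. Each row $T_*\otimes_R P_j$ is acyclic since $P_j$ is flat and $T$ is exact, and since $P$ is bounded each total degree is a finite direct sum, so the acyclic assembly lemma yields $\mathrm{Tot}(T\otimes_R P)\simeq 0$. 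Taking $P$-homology first produces the spectral sequence
$$
\E^2_{p,q}=\ctor^R_p(M,\H_q(P))\cong\ctor^R_p(M,N)^{\oplus a_q}\Longrightarrow 0,
$$
where $a_q\ge 0$ and $a_{\hinf P},a_{\hsup P}>0$ by Remark \ref{8}(1). The Ext half uses the identical argument applied to $\Hom_R(T,P)$, whose columns $\Hom_R(T,P_j)\cong(T^*)^{\oplus\rank P_j}$ are acyclic; this yields a cohomological spectral sequence
$$
\E_2^{p,q}\cong\cext^p_R(M,N)^{\oplus a_{-q}}\Longrightarrow 0,
$$
with differentials $d_r\colon\E_r^{p,q}\to\E_r^{p+r,q-r+1}$.

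The remainder is bookkeeping on differentials. For part (1) with Tor I would focus on $\E^2_{n+t+1,\hinf P}$: every outgoing $d^r$ (with $r\ge 2$) lands at $(n+t+1-r,\hinf P+r-1)$, which for $2\le r\le t+1$ has $p$-coordinate inside the vanishing strip $[n,n+t-1]$, and for $r\ge t+2$ has $q$-coordinate above $\hsup P$; every incoming differential starts from $q$-coordinate below $\hinf P$. Hence $\E^\infty_{n+t+1,\hinf P}=\E^2_{n+t+1,\hinf P}\cong\Tor^R_{n+t+1}(M,N)^{\oplus a_{\hinf P}}$, which must vanish since the sequence converges to zero and $a_{\hinf P}>0$. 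Part (2) with Tor is the mirror analysis at $\E^2_{n,\hsup P}$: the incoming differentials come from $(n+r,\hsup P-r+1)$ with $p$-coordinate in $[n+2,n+t+1]$ for $2\le r\le t+1$, and the rest are killed by the $q$-range. The two Ext statements are the same arguments transplanted to the cohomological spectral sequence, centered at $\E_2^{n+t+1,-\hsup P}$ for (1) and $\E_2^{n,-\hinf P}$ for (2).

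The main technical obstacle I anticipate is justifying the acyclicity of $\mathrm{Tot}(T\otimes_R P)$ and $\mathrm{Tot}(\Hom_R(T,P))$ despite $T$ being unbounded, together with convergence of the resulting spectral sequences. Both are handled by the boundedness of the perfect complex $P$ (via Proposition \ref{14}), which makes each diagonal of each bicomplex a finite direct sum and puts us squarely in the hypotheses of the acyclic assembly lemma.
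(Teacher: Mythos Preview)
Your argument is correct and genuinely different from the paper's. The paper first replaces $M$ by $\syz^{\Gdim M}M$ to assume $M$ is totally reflexive, then chases through the short exact sequences $0\to B_i\to Z_i\to H_i\to0$ and $0\to Z_i\to P_i\to B_{i-1}\to0$, using at each step that $\Tor^R_{>0}(M,C)=\Ext^{>0}_R(M,C)=0$ for the finite-projective-dimension module $C=\Coker\partial_{h+1}$ (and that $Z_{\hinf P}$ is projective). Your spectral-sequence approach packages exactly this bookkeeping: the acyclicity of $T\otimes_RP$ (resp.\ $\Hom_R(T,P)$) is the global form of the vanishing $\Tor^R_{>0}(M,\text{proj})=0$ (resp.\ exactness of $T^*$), and the corner argument at $(n+t+1,\hinf P)$ or $(n,\hsup P)$ is the spectral-sequence avatar of the step-by-step chase. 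Your route treats all four cases uniformly and makes the role of the Tate resolution transparent; the paper's route is more elementary and avoids any convergence discussion.

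Two small comments. First, invoking Proposition~\ref{14} is unnecessary and technically requires $N$ to be finitely generated, which the lemma does not assume: any finite quasi-projective resolution is already a bounded complex of projectives, so your bicomplex is row-finite as it stands, and both the acyclic assembly lemma and strong convergence of the column-filtration spectral sequence apply directly (the filtration is finite in each total degree because $P$ is bounded). Second, for the Ext half with $P_j$ merely projective (not free of finite rank), replace the isomorphism $\Hom_R(T,P_j)\cong(T^*)^{\oplus\rank P_j}$ by the observation that $\Hom_R(T,P_j)$ is a direct summand of a direct sum of copies of $T^*$ (using that each $T_i$ is finitely generated, so $\Hom_R(T_i,-)$ commutes with direct sums), hence still exact.
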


\begin{proof} Set $g=\Gdim_RM\in\N$.
Replacing $M$ with $\syz^gM$, we may assume that $M$ is totally reflexive.
If $t=0$, then $\pd_RN<\infty$ and $\Tor^R_{>0}(M,N)=\Ext_R^{>0}(M,N)=0$ by \cite[Proposition (4.12)]{AB}.
So let $t>0$. Set $h=\hsup P$, $l=\hinf P$, and $C=\Coker\partial_{h+1}$.
Since $\pd_RC<\infty$, similarly as above $\Tor^R_{>0}(M,C)=\Ext_R^{>0}(M,C)=0$.
There are exact sequences
$$
0\to H_h\to C\to B_{h-1}\to0,\quad
\begin{cases}
0\to B_i\to Z_i\to H_i\to0,\\
0\to Z_i\to P_i\to B_{i-1}\to0
\end{cases}
(i\in\Z),
$$
where $Z_i=\z_i(P)$, $B_i=\b_i(P)$ and $H_i=\H_i(P)$.
We use the fact that each $P_i$ is projective and each $H_i$ is a direct sum of copies of $N$.

(1) First we show the assertion for Tor.
We have $0=\Tor^R_n(M,H_h)=\Tor^R_{n+1}(M,B_{h-1})$, from which we get 
$0=\Tor^R_{n+1}(M,Z_{h-1})=\Tor^R_{n+2}(M,B_{h-2})$ if $t\ge2$, from which we get 
$0=\Tor^R_{n+2}(M,Z_{h-2})=\Tor^R_{n+3}(M,B_{h-3})$ if $t\ge3$.
Iterating this procedure gives, and using the fact that $Z_l$ is projective, we get  
$\Tor^R_{n+t+1}(M,H_l)=\Tor^R_{n+t}(M,B_l)=0$, and hence 
$\Tor^R_{n+t+1}(M,N)=0$.

Next we show the assertion for Ext.
We have, since $Z_l$ is projective, 
$0=\Ext_R^n(M,H_l)=\Ext_R^{n+1}(M,B_l)=\Ext_R^{n+2}(M,Z_{l+1})$, 
from which we get $0=\Ext_R^{n+2}(M,B_{l+1})=\Ext_R^{n+3}(M,Z_{l+2})$ if $t\ge2$, 
from which we get $0=\Ext_R^{n+3}(M,B_{l+2})=\Ext_R^{n+4}(M,Z_{l+3})$ if $t\ge3$.
Iterating this procedure gives  
$\Ext_R^{n+t+1}(M,H_h)=\Ext_R^{n+t}(M,B_{h-1})=0$, and hence $\Ext_R^{n+t+1}(M,N)=0$.

(2) First we show the assertion for Tor.
We have $0=\Tor^R_{n+t+1}(M,H_l)=\Tor^R_{n+t}(M,B_l)=\Tor^R_{n+t-1}(M,Z_{l+1})$, from which we get 
$0=\Tor^R_{n+t-1}(M,B_{l+1})=\Tor^R_{n+t-2}(M,Z_{l+2})$ if $t\ge2$, from which we get 
$0=\Tor^R_{n+t-2}(M,B_{l+2})=\Tor^R_{n+t-3}(M,Z_{l+3})$ if $t\ge3$.
Iterating this procedure gives $\Tor^R_n(M,H_h)=\Tor^R_{n+1}(M,B_{h-1})=0$, and hence 
$\Tor^R_n(M,N)=0$.

Next we show the assertion for Ext.
We have $0=\Ext_R^{n+t+1}(M,H_h)=\Ext_R^{n+t}(M,B_{h-1})$, from which we get 
$0=\Ext_R^{n+t}(M,Z_{h-1})=\Ext_R^{n+t-1}(M,B_{h-2})$ if $t\ge2$, from which we get 
$0=\Ext_R^{n+t-1}(M,Z_{h-2})=\Ext_R^{n+t-2}(M,B_{h-3})$ if $t\ge3$.
Iterating this procedure, and using the fact that $Z_l$ is projctive, we have 
$\Ext_R^n(M,H_l)=\Ext_R^{n+1}(M,B_l)=0$, and hence $\Ext_R^n(M,N)=0$.
\end{proof}

\begin{thm}\label{38}
Let $M$ and $N$ be non-zero $R$-modules with $\qpd_RN<\infty$.
Let $n>0$ be an integer, and $P$ a finite quasi-projective resolution of $N$.
If $\Gdim_RM<n$ and $\Tor_i^R(M,N)$ (resp. $\Ext_R^i(M,N)$) vanishes for all 
$n\le i\le n+\hsup P-\hinf P$, then it vanishes for all $i>\Gdim_RM$.
\end{thm}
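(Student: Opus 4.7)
The plan is to leverage Lemma \ref{39} to propagate the given window of vanishing both upward (to all $i\ge n$) and then downward (to all $i>\Gdim_RM$). Set $g=\Gdim_RM$ and $t=\hsup P-\hinf P$. If $t=0$ then $\pd_RN<\infty$ and the conclusion is immediate from \cite[Proposition (4.12)]{AB} as noted at the start of the proof of Lemma \ref{39}. So I assume $t\ge 1$, in which case the hypothesis furnishes $t+1$ consecutive vanishings $\Tor_i^R(M,N)=0$ (respectively $\Ext_R^i(M,N)=0$) for $n\le i\le n+t$.

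For the upward direction, I would show by induction on $k\ge t$ that vanishing holds on the whole interval $[n,n+k]$. The base case $k=t$ is the hypothesis. For the inductive step, assuming the claim for $k$, I apply Lemma \ref{39}(1) with starting index $m=n+k-t$. Because $k\ge t$ and $n>g$, the condition $m>g$ is satisfied; the hypothesis window of the lemma is $[m,m+t-1]=[n+k-t,n+k-1]\subseteq[n,n+k]$, where vanishing is already known; and the lemma's conclusion gives vanishing at $m+t+1=n+k+1$. This yields the claim for $k+1$ and thus, by induction, vanishing on $[n,\infty)$.

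For the downward direction, I would iterate Lemma \ref{39}(2). Suppose inductively that vanishing has been established on $[n-j,\infty)$ for some $j\ge 0$ with $n-j-1>g$. Applying Lemma \ref{39}(2) with $m=n-j-1$ is permissible (since $m>g$), and the required hypothesis window $[m+2,m+t+1]=[n-j+1,n-j+t]$ lies inside $[n-j,\infty)$, so the lemma produces vanishing at $m=n-j-1$, extending the range to $[n-j-1,\infty)$. Iterating until $n-j-1=g$ gives vanishing on $[g+1,\infty)$, which is exactly the statement that $\Tor_i^R(M,N)=0$ (respectively $\Ext_R^i(M,N)=0$) for all $i>\Gdim_RM$. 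The Ext case is handled identically, since Lemma \ref{39} treats Tor and Ext in parallel.

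There is no genuine obstacle beyond careful bookkeeping: one must verify at every step that the hypothesis window of Lemma \ref{39} sits inside the region already known to vanish, and that the constraint $m>\Gdim_RM$ is preserved throughout both inductions. The slight asymmetry that Lemma \ref{39}(1) skips the index $m+t$ when producing vanishing at $m+t+1$ is harmless here because the starting window of length $t+1$ already includes one extra vanishing compared to what the lemma requires, which is precisely what lets the upward induction close cleanly.
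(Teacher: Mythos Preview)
Your proof is correct and follows essentially the same approach as the paper: iterate Lemma \ref{39}(1) to push vanishing upward to $[n,\infty)$, then iterate Lemma \ref{39}(2) to push it downward to $[\Gdim_RM+1,\infty)$. The paper's proof differs only cosmetically in indexing and does not separate out the $t=0$ case (relying instead on the fact that Lemma \ref{39} already covers it), but the substance is identical.
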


\begin{proof}
It is enough to show the assertions for Tor; the ones for Ext are shown similarly.
Put $t=\hsup P-\hinf P$ and $m=\Gdim M+1$.
Lemma \ref{39}(1) implies $\Tor^R_i(M,N)=0$ for all $n\le i\le n+t+1$.
Hence $\Tor^R_i(M,N)=0$ for all $n+1\le i\le (n+1)+t$, and applying Lemma \ref{39}(1) again gives 
$\Tor^R_i(M,N)=0$ for all $n\le i\le n+t+2$.
Iteration of this procedure shows $\Tor^R_{\ge n}(M,N)=0$.
If $m\ge n$, then $\Tor^R_{\ge m}(M,N)=0$.
If $m\le n-1$, then $\Gdim M=m-1<n-1$, and the vanishing of $\Tor^R_i(M,N)$ for all $(n-1)+2\le i\le (n-1)+t+1$ and Lemma \ref{39}(2) imply $\Tor^R_{\ge(n-1)}(M,N)=0$.
If $m\le n-2$, then $\Gdim M=m-1<n-2$, and the vanishing of $\Tor^R_i(M,N)$ for all $(n-2)+2\le i\le (n-2)+t+1$ and Lemma \ref{39}(2) imply $\Tor^R_{\ge(n-2)}(M,N)=0$.
Iteration of this procedure shows $\Tor^R_{\ge m}(M,N)=0$.
\end{proof}

The following corollary is immediate from the above theorem.

\begin{cor}\label{41}
Let $M$ and $N$ be $R$-modules such that $\qpd_RN<\infty$.
If there exists an integer $n>0$ with $\Gdim_RM<n$ such that $\Tor_i^R(M,N)$ (resp. $\Ext^i_R(M,N)$) vanishes for all $n\le i\le n+\qpl_RN$, then it vanishes for all $i>\Gdim_RM$.
\end{cor}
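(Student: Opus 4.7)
The plan is to reduce directly to Theorem \ref{38} by choosing an appropriate finite quasi-projective resolution of $N$. Since $\qpd_RN<\infty$, Remark \ref{19}(1b) gives $\qpl_RN<\infty$, and by the definition of $\qpl_RN$ as an infimum of nonnegative integers, there exists a finite quasi-projective resolution $P$ of $N$ achieving $\sup P-\inf P=\qpl_RN$.

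The key observation is the pair of trivial inequalities $\hsup P\le\sup P$ and $\hinf P\ge\inf P$, which combine to give $\hsup P-\hinf P\le\sup P-\inf P=\qpl_RN$. Consequently, the hypothesis that $\Tor_i^R(M,N)$ (respectively $\Ext_R^i(M,N)$) vanishes throughout the interval $n\le i\le n+\qpl_RN$ subsumes the vanishing throughout the (possibly shorter) interval $n\le i\le n+\hsup P-\hinf P$ that is required to apply Theorem \ref{38}.

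Invoking Theorem \ref{38} with this choice of $P$ and the given $n$ (for which $\Gdim_RM<n$) then yields the desired vanishing of $\Tor_i^R(M,N)$ (respectively $\Ext_R^i(M,N)$) for all $i>\Gdim_RM$. There is no substantive obstacle here, as the argument amounts only to comparing the numerical invariants $\sup P-\inf P$ and $\hsup P-\hinf P$ of a quasi-projective resolution; all the real homological work has been absorbed into Theorem \ref{38}.
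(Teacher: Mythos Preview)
Your argument is correct and is exactly what the paper intends: it states that the corollary is immediate from Theorem \ref{38}, and your proof spells out precisely why, by choosing a quasi-projective resolution $P$ realizing $\qpl_RN$ and using the trivial inequality $\hsup P-\hinf P\le\sup P-\inf P$. The only cosmetic omission is that Theorem \ref{38} assumes $M,N\ne0$, but those cases are trivial.
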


The following result gives a criterion for modules of finite quasi-projective dimension to have finite Gorenstein dimension. This is the main point of \cite{AC}.

\begin{prop}\label{mstar} Let $M$ be a finitely generated $R$-module with finite quasi-projective dimension.
If $\Ext^{\gg 0}_R(M,R)=0$ then $\Gdim_RM<\infty$. In particular, if $\Ext^i_R(M,R)=0$ for all $i>0$, then $M$ is a totally reflexive $R$-module, and one has $\qpd_RM^* < \infty$ and $\qpd_R\Omega^iM<\infty$ for all $i\in \Z$.
\end{prop}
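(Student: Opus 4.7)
The plan is first to establish $\Gdim_R M < \infty$ by embedding $M$ into a module of finite projective dimension, then to leverage total reflexivity to dualize. By Proposition~\ref{14}, $M$ admits a finite quasi-projective resolution $P$ built from finitely generated projective modules; set $s := \sup P$ and $h := \hsup P$. Since $\H_i(P) = 0$ for all $i > h$, the top-end truncation
$$
0 \to P_s \to P_{s-1} \to \cdots \to P_{h+1} \to P_h \to C \to 0
$$
is exact, where $C := \Coker \partial_{h+1}$, whence $\pd_R C \le s - h < \infty$. The quotient map $P_h \twoheadrightarrow C$ restricts to an injection of the cycle subquotient $\H_h(P) \cong M^{\oplus a_h}$ (with $a_h > 0$) into $C$, so $M$ embeds into a module of finite projective dimension. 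Combined with the hypothesis $\Ext^{\gg 0}_R(M,R) = 0$, the criterion of \cite{AC} then yields $\Gdim_R M < \infty$.

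Now assume the stronger hypothesis that $\Ext^i_R(M,R) = 0$ for all $i > 0$. Together with $\Gdim_R M < \infty$, the Auslander--Bridger identity $\Gdim_R M = \sup\{i \ge 0 : \Ext^i_R(M,R) \neq 0\}$ forces $\Gdim_R M = 0$, so $M$ is totally reflexive.

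For $\qpd_R M^* < \infty$, I would apply $\Hom_R(-,R)$ to the perfect complex $P$: the result $\Hom_R(P,R)$ is again a bounded complex of finitely generated projectives. The cohomological spectral sequence of Lemma~\ref{spseq},
$$
\E^{p,q}_2 = \Ext^p_R(\H_q(P), R) \Longrightarrow \H^{p+q}(\Hom_R(P, R)),
$$
collapses onto the row $p = 0$ by total reflexivity of $M$, yielding $\H^q(\Hom_R(P,R)) \cong (M^*)^{\oplus a_q}$ for every $q$. Re-indexing $\Hom_R(P,R)$ as a chain complex (placing $\Hom_R(P_q,R)$ in degree $-q$) then exhibits it as a finite quasi-projective resolution of $M^*$.

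Finally, to handle $\qpd_R \syz^i M$ for every $i \in \Z$: for $i \ge 0$ iterate Proposition~\ref{Syz}(4). For $i = -j < 0$, the standard identification $\syz^{-j} M \cong (\syz^j M^*)^*$ up to projective summands, obtained by dualizing a projective resolution of the totally reflexive module $M^*$, reduces the problem to showing $\qpd_R (\syz^j M^*)^* < \infty$. But $\syz^j M^*$ is itself totally reflexive and of finite quasi-projective dimension (by the $i \ge 0$ case applied to $M^*$), so the spectral-sequence argument of the previous paragraph, applied to $\syz^j M^*$ in place of $M$, delivers the desired conclusion. The principal obstacle is the opening step, where finite Gorenstein dimension rests on the external criterion of \cite{AC}; once that is in hand, the remaining assertions reduce to spectral-sequence collapse and syzygy bookkeeping.
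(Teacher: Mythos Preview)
Your proof has a genuine gap at the opening step. The criterion you invoke---that an embedding $M\hookrightarrow C$ with $\pd_R C<\infty$ together with $\Ext^{\gg0}_R(M,R)=0$ forces $\Gdim_R M<\infty$---is not what \cite{AC} establishes, and in fact it fails in general: there exist local rings and reflexive modules $M$ with $\Ext^{>0}_R(M,R)=0$ yet $\Ext^{>0}_R(M^*,R)\neq0$ (hence $M$ is not totally reflexive and $\Gdim_R M=\infty$), and any reflexive $M$ embeds into a free module via $M\cong M^{**}\hookrightarrow\Hom_R(F,R)$ for a free cover $F\twoheadrightarrow M^*$. The results in \cite{AC} concern \emph{reducing} invariants, which require an iterated chain of short exact sequences built from the entire quasi-projective resolution through its cycles and boundaries, not a single embedding extracted at the top. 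If instead you meant to cite \cite{AC} for the full implication ``$\qpd_R M<\infty$ and $\Ext^{\gg0}_R(M,R)=0\Rightarrow\Gdim_R M<\infty$,'' that is precisely the assertion being proved here, so the citation would be circular.

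The paper's argument is self-contained and supplies exactly the missing step. After reducing to $\Ext^{>0}_R(M,R)=0$ by passing to a high syzygy, and after showing (via the spectral sequence, just as you do) that $P^*$ is a quasi-projective resolution of $M^*$, it runs the spectral sequence a \emph{second} time on $P^*$ against $R$, invoking the natural isomorphism $P\cong P^{**}$: analysis of the corner term $\E_2^{0,\hinf P^*}$ and the edge homomorphisms shows both that $M$ is reflexive and that $\Ext^{>0}_R(M^*,R)=0$, whence $M$ is totally reflexive directly. Once total reflexivity is secured in this way, your remaining steps---finiteness of $\qpd_R M^*$ via collapse of the dual spectral sequence, and finiteness of $\qpd_R\Omega^iM$ for $i<0$ via $\Omega^{-j}M\cong(\Omega^j M^*)^*$---coincide with the paper's proof.
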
 
\begin{proof} By using Proposition \ref{Syz}(4), we can replace $M$ with $\Omega^nM$ for some $n \gg 0$ and assume $\Ext^{>0}_R(M,R)=0$. 
We show that $M$ is a totally reflexive module.
Proposition \ref{14} gives a perfect complex $P$ which is a quasi-projective resolution of $M$. 
By Lemma \ref{spseq}, there exists a spectral sequence $\Ext^p_R(\H_q(P),R)\Longrightarrow \H^{p+q}(P^*)$.
Since $\Ext^{>0}_R(M,R)=0$, we get an isomorphism $(\H_q(P))^*\cong \H^q(P^*)$ for all $q\geq 0$.
Since $(\H_q(P))^*$ is a direct sum of copies of $M^*$ and $P^*$ is a complex of projective $R$-modules, it follows that $P^*$ is a quasi-projective resolution of $M^*$. Hence $\qpd_RM^*<\infty$.
Next, by applying Lemma \ref{spseq} once more, there exists a spectral sequence 
$$\E^{p,q}_2\cong \Ext^p_R(\H_q(P^*),R)\Longrightarrow \H^{p+q}(P^{**}).$$
The natural isomorphism $P\to P^{**}$ implies that $\H_i(P)\cong \H_i(P^{**})$. In the last spectral sequence, since $\E^{0,\hinf P^*}_2 \cong \E^{0,\hinf P^*}_\infty$, we get $\Hom_R(\H_{\hinf P^*}(P^*),R)\cong \H_{\hsup P}(P)$. Since the edge homomorphisms $\H_i(P) \to \Hom_R(\H_{-i}(P^*),R)$ are induced from the natural maps, we have $\H_{\hsup P}(P)$ is reflexive and therefore $M$ is reflexive as well. Hence $\Hom_R(\H_{-i}(P^*),R)\cong \H_i(P)$ for all $i$. This implies that $\Ext^{>0}_R(M^*,R)=0$. Therefore $M$ is totally reflexive.

By Proposition \ref{Syz}(4) we have $\qpd_R\Omega^iM<\infty$ for all $i\geq 0$. For $i<0$, one has $\Omega^iM \cong (\Omega^{-i}(M^*))^*$. Therefore by the last argument and Proposition \ref{Syz}(4) we get $\qpd_R\Omega^iM < \infty$.
\end{proof}

\begin{prop}\label{Tate}
Let $M$ and $N$ be $R$-modules.
Assume that $M$ has finite G-dimension, and that either $M$ or $N$ has finite quasi-projective dimension.
Then the following equivalences hold.
\begin{enumerate}[\rm(1)]
\item
$\cext^{\gg0}_R(M,N)=0\iff
\cext^{\ll0}_R(M,N)=0\iff
\cext^i_R(M,N)=0\text{ for all $i \in \Z$}$.
\item
$\ctor_{\gg0}^R(M,N)=0\iff
\ctor_{\ll0}^R(M,N)=0\iff
\ctor_i^R(M,N)=0\text{ for all $i \in \Z$}$.
\end{enumerate}
\end{prop}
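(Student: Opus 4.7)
The plan is to reduce both (1) and (2) to the case where $M$ is totally reflexive, prove the two $(\gg 0)\Rightarrow(\forall i)$ implications by a uniform syzygy-shift argument based on Corollary~\ref{17} or Corollary~\ref{41}, and deduce the two $(\ll 0)\Rightarrow(\forall i)$ implications from the classical duality between Tate cohomology and Tate homology of the $R$-dual. Setting $g=\Gdim_R M$ and replacing $M$ by $\syz^g M$, Remark~\ref{21}(2) shifts both $\cext^i_R(M,N)$ and $\ctor_i^R(M,N)$ uniformly in $i$, so the three vanishing conditions in each of (1) and (2) are preserved; Proposition~\ref{Syz}(4) preserves the hypothesis $\qpd_R M<\infty$ in Case A. I may therefore assume $M$ is totally reflexive, so that each $\syz^k M$ (defined via a Tate resolution for $k<0$) is totally reflexive, $\cext^j_R(\syz^k M,-)=\Ext^j_R(\syz^k M,-)$ for $j\ge 1$, and analogously for $\ctor$ and $\Tor$. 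Iterating Remark~\ref{21}(2) then yields, for every $i\in\Z$,
\[
\cext^i_R(M,N)\cong\Ext^1_R(\syz^{i-1}M,N),\qquad \ctor_i^R(M,N)\cong\Tor_1^R(\syz^{i-1}M,N).
\]

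For the $(\gg 0)$ directions, fix $i\in\Z$ and set $L=\syz^{i-1}M$. The hypothesis together with the shift formulas gives $\Ext^j_R(L,N)=0$ (respectively $\Tor_j^R(L,N)=0$) for all $j\gg 0$. In Case A, Proposition~\ref{mstar} yields $\qpd_R L<\infty$; since $L$ is totally reflexive, Theorem~\ref{AB} (applied after localization if $R$ is not local) forces $\qpd_R L=\depth R-\depth_R L=0$, so Corollary~\ref{17} applied with $n\gg 0$ produces vanishing of the relevant $\Ext$ (or $\Tor$) for $j\ge\min\{n,\qpd_R L+1\}=1$. In Case B, $\qpd_R N<\infty$ and $\Gdim_R L=0<1$, so Corollary~\ref{41} gives the same conclusion. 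Specializing to $j=1$ and using the displayed shift formulas shows $\cext^i_R(M,N)=0$ (respectively $\ctor_i^R(M,N)=0$) for every $i\in\Z$.

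For the $(\ll 0)$ directions I would invoke the natural isomorphism $\cext^i_R(M,N)\cong\ctor_{-i}^R(M^*,N)$ valid for totally reflexive $M$ and every $i\in\Z$; it comes from the identification $\Hom_R(F,N)\cong F^*\otimes_R N$ for every finitely generated projective $F$ applied degreewise to a Tate resolution of $M$, whose $R$-dual is, after reindexing, a Tate resolution of $M^*$. Since $M^*$ is again totally reflexive and, in Case A, Proposition~\ref{mstar} yields $\qpd_R M^*<\infty$, the pair $(M^*,N)$ satisfies the hypotheses of the proposition; $\cext^{\ll 0}_R(M,N)=0$ is then equivalent to $\ctor_{\gg 0}^R(M^*,N)=0$, which by the previous paragraph forces $\ctor_i^R(M^*,N)=0$ for every $i$, hence $\cext^i_R(M,N)=0$ for every $i$. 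The implication for $\ctor_{\ll 0}^R(M,N)$ is symmetric, and the reverse implications (c)$\Rightarrow$(a),(b) are trivial. The main technicality I expect is the careful bookkeeping of the duality — in particular matching the degree flip on the nose — but this is classical for totally reflexive modules and follows directly by dualizing the complex of finitely generated projectives in the Tate resolution.
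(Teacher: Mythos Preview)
Your overall strategy is sound, but it diverges from the paper's proof in one structural way that creates a genuine gap in Case~A when $R$ is not local.

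\textbf{The difference.} The paper proves the $(\ll 0)\Rightarrow(\forall i)$ implication directly, and it shifts in the \emph{second} variable: using that $\cext^*_R(M,-)$ kills projectives, one has $\cext^{i}_R(M,N)\cong\cext^{i+j}_R(M,\syz^jN)$, so $M$ stays fixed and a single quasi-projective resolution of $M$ of length $l$ controls everything via Theorem~\ref{main1}. You instead shift in the \emph{first} variable, writing $\cext^{i}_R(M,N)\cong\Ext^{1}_R(\syz^{i-1}M,N)$ and then applying Corollary~\ref{17} to each $L=\syz^{i-1}M$ separately. This forces you to know something about $\qpd_RL$ for every $i$, not just for $M$.

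\textbf{The gap.} In Case~A you claim $\qpd_RL=0$ by Theorem~\ref{AB} ``after localization if $R$ is not local.'' But Theorem~\ref{AB} is a local statement, and localizing does not help: you would need $\Ext^1_R(L,N)_\p\cong\Ext^1_{R_\p}(L_\p,N_\p)$, which fails in general since $N$ is not assumed finitely generated. Proposition~\ref{mstar} does give $\qpd_RL<\infty$ for all $i\in\Z$, but nothing in the paper yields a uniform bound (let alone $0$) for $\qpd_R\syz^kM$ over a non-local ring. So as written, your Case~A argument only establishes the result when $R$ is local. The paper's second-variable shift avoids this entirely: the length $l$ of one resolution of $M$ is the only bound ever needed.

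\textbf{What is fine.} Your Case~B argument via Corollary~\ref{41} is correct and essentially matches the paper's. Your idea to deduce the $(\ll 0)$ implications from the $(\gg 0)$ ones via the duality $\cext^{i}_R(M,N)\cong\ctor_{-i-1}^R(M^*,N)$ (your index is off by one, but that is harmless) is a legitimate alternative route; the paper itself invokes exactly this isomorphism in the proof of Theorem~\ref{24}, though not here. If you want to keep your first-variable approach, the clean fix is to mimic the paper: shift $N$ rather than $M$ in Case~A, so that a single resolution of $M$ does all the work.
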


\begin{proof}
We only prove that $\cext^{\ll0}_R(M,N)=0$ implies $\cext^i_R(M,N)=0$ for all $i \in \Z$; the other implications are either obvious or similarly shown.
We may assume $M\ne0\ne N$.
In view of Remark \ref{21} and Proposition \ref{Syz}(4), we may also assume that $M$ is totally reflexive.
Let $t\in\Z$ be with $\cext^i_R(M,N)=0$ for all $i\leq t$.

First, assume $\qpd M<\infty$ and take a quasi-projective resolution of $M$ of length $l$.
Let $t-l\leq i \leq t$ and fix $j\geq \max\{l-t+1 ,0\}$.
Then $i+j>0$ and $0=\cext^{i}_R(M,N)\cong \cext^{i+j}_R(M,\syz^jN)\cong \Ext^{i+j}_R(M,\syz^jN)$.
It follows that for $l+1$ consecutive $i$ we have $\Ext^i_R(M,\Omega^jN)=0$. Theorem \ref{main1}(2) implies that for all $k\ge t-l+j$ one has $0=\Ext_R^k(M,\syz^jN)\cong\cext_R^k(M,\syz^jN)\cong\cext_R^{k-j}(M,N)$, and thus $\cext^{i}_R(M,N)=0$ for all $i\geq t -l$.
Therefore, $\cext_R^i(M,N)=0$ for all $i\in\Z$.

Next, assume $\qpd N<\infty$ and take a quasi-projective resolution of $N$ of length $r$.
Using the isomorphism $\cext^i_R(M,N)\cong \cext^{i+j}_R(M,\syz^jN)$ and Proposition \ref{Syz}(4), we may replace $N$ with $\syz^jN$ where $j\ge \max\{r-t+1 ,0\}$.
Then by Theorem \ref{38} we get $\Ext^{i}_R(M,N)=0$ for all $i>0$, and therefore $\cext^i_R(M,N)=0$ for all $i\in\Z$.
\end{proof}

Now we obtain a result on symmetry in vanishing of Ext modules.

\begin{thm}\label{24}
Let $R$ be a Gorenstein ring.
Let $M$ and $N$ be finitely generated $R$-modules.
Assume either $\qpd_RM$ or $\qpd_RN$ is finite.
Then $\Ext^{\gg 0}_R(M,N)=0$ if and only if $\Ext^{\gg 0}_R(N,M)=0$.
\end{thm}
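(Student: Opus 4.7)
The plan is to use Proposition \ref{Tate} to reduce the theorem to a statement about vanishing of Tate cohomology in every integer degree, and then to obtain the required symmetry from standard duality for Tate (co)homology over Gorenstein rings.

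Since $R$ is Gorenstein, both $M$ and $N$ have finite Gorenstein dimension (Remark \ref{21}(1)), so $\cext^i_R(M,N)$ and $\cext^i_R(N,M)$ are defined for all $i\in\Z$. By iterating Proposition \ref{Syz}(4) and applying Remark \ref{21}(2), I will replace $M$ and $N$ by sufficiently high syzygies and assume both are totally reflexive; this preserves the hypothesis that one of them has finite quasi-projective dimension and does not affect whether $\Ext^i_R(M,N)$ or $\Ext^i_R(N,M)$ vanishes for $i\gg0$. Each of the pairs $(M,N)$ and $(N,M)$ then satisfies the hypotheses of Proposition \ref{Tate} (one entry has finite G-dimension and one has finite quasi-projective dimension), which yields
\[
\Ext^i_R(M,N)=0\text{ for all }i\gg 0\iff\cext^i_R(M,N)=0\text{ for all }i\in\Z,
\]
and the analogous equivalence with $M$ and $N$ interchanged. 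Thus the theorem reduces to proving
\[
\cext^i_R(M,N)=0\text{ for all }i\in\Z\iff\cext^i_R(N,M)=0\text{ for all }i\in\Z.
\]

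To establish this symmetry, I would combine two ingredients. First, for a complete resolution $T$ of a totally reflexive module $X$ by finitely generated projectives, the natural isomorphism $\Hom_R(T,Y)\cong T^*\otimes_R Y$, together with the identification (up to reindexing) of $T^*$ as a complete resolution of $X^*$, yields the Ext--Tor duality
\[
\cext^i_R(X,Y)\cong\ctor^R_{-i}(X^*,Y).
\]
Second, Tate homology is symmetric in its two arguments whenever both have finite Gorenstein dimension; see \cite{AM}. Applying the Ext--Tor duality to each of the pairs $(M,N)$ and $(N,M)$, invoking the symmetry of Tate Tor, and using the double-dual isomorphisms $M^{**}\cong M$ and $N^{**}\cong N$ afforded by total reflexivity, I expect to deduce the required equivalence of Tate Ext vanishing in all degrees.

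The hard part will be the index-and-sign bookkeeping needed to ensure that the Ext--Tor duality, the Tate Tor symmetry, and the double-dual isomorphism fit together into a coherent chain producing an equivalence between the vanishing of $\cext^i_R(M,N)$ and $\cext^i_R(N,M)$ in all degrees. Once this assembly is carried out, the theorem follows cleanly from Proposition \ref{Tate}.
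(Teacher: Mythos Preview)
Your reduction via Proposition~\ref{Tate}(1) to the equivalence of all-degree vanishing of $\cext_R^i(M,N)$ and $\cext_R^i(N,M)$ is fine, but the chain of identities you propose for that equivalence is circular. Combining the duality $\cext^i_R(X,Y)\cong\ctor^R_{-i}(X^*,Y)$ with balance of Tate Tor and the double-dual isomorphisms only yields $\cext^i_R(M,N)\cong\cext^i_R(N^*,M^*)$ and $\cext^i_R(N,M)\cong\cext^i_R(M^*,N^*)$, so you are led straight back to the same symmetry problem for the pair $(N^*,M^*)$. No rearrangement of these three ingredients breaks the loop. (As a side remark, balance of Tate homology is not proved in \cite{AM}.)

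The paper closes the loop by a route that uses the finite quasi-projective dimension hypothesis a second time. After translating $\cext^i_R(M,N)=0$ for all $i$ into $\ctor^R_i(M^*,N)=0$ for all $i$ via \cite[4.4.7]{AB2}, it invokes Proposition~\ref{mstar} to obtain $\qpd_RM^*<\infty$, then applies Proposition~\ref{Tate}(2) to the pair $(M^*,N)$ to descend to ordinary Tor, getting $\Tor^R_{\gg0}(M^*,N)=0$, and finally cites \cite[Theorem~2.1]{HJ} (over a Gorenstein local ring, for maximal Cohen--Macaulay modules) to conclude $\Ext^{\gg0}_R(N,M)=0$. The ingredients you are missing are thus Proposition~\ref{mstar}, which transfers the qpd hypothesis to $M^*$, and the passage from Tate Tor to ordinary Tor so that the Huneke--Jorgensen result applies.
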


\begin{proof}
By symmetry, we may assume $\qpd_RM<\infty$.
By Propositions \ref{Syz}(4) and \ref{7}(1) we may assume that $R$ is local and $M$, $N$ are maximal Cohen--Macaulay.
We have $\cext^{-i-1}_R(M,N) \cong \ctor^R_i(M^*,N)$ for all $i\in\Z$ by \cite[4.4.7]{AB2}.
Putting this together with Proposition \ref{Tate}(1), we get $\Ext_R^{\gg0}(M,N)=0$ if and only if $\cext_R^i(M,N)=0$ for all $i\in\mathbb{Z}$, if and only if $\ctor^R_i(M^*,N)=0$ for all $i\in\mathbb{Z}$. Since $\qpd_RM^*<\infty$ by  Proposition \ref{mstar}, we have $\ctor^R_i(M^*,N)=0$ for all $i\in\mathbb{Z}$ if and only if $\Tor^R_{\gg0}(M^*,N)=0$ by Proposition \ref{Tate}(2). Finally by \cite[Theorem 2.1]{HJ}, $\Tor_{\gg 0}^R(M^*,N)=0$ if and only if $\Ext^{\gg 0}_R(N,M)=0$.
\end{proof}

The above theorem yields the following two corollaries.

\begin{cor}
Let $R$ be a Gorenstein local ring.
Let $M$ and $N$ be finitely generated $R$-modules with $\qpd_RN<\infty$.
Putting $m=\dim R-\depth M$ and $n=\dim R-\depth N$, one has equivalences
$$
\Ext^{\gg 0}_R(M,N)=0\iff
\Ext^{\gg 0}_R(N,M)=0\iff
\Ext^{>m}_R(M,N)=0\iff
\Ext^{>n}_R(N,M)=0.
$$
\end{cor}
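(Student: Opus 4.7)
The plan is to derive the four-way equivalence by combining Theorem \ref{24} with Proposition \ref{Tate} and the standard fact that Ext agrees with Tate cohomology in degrees above the Gorenstein dimension. Since $R$ is Gorenstein, every finitely generated $R$-module has finite G-dimension, and the Auslander--Bridger formula combined with $\depth R=\dim R$ yields $\Gdim_RM=\dim R-\depth M=m$ and $\Gdim_RN=\dim R-\depth N=n$. From the construction of a complete resolution one then has natural isomorphisms $\Ext^i_R(M,N)\cong\cext^i_R(M,N)$ for all $i>m$ and $\Ext^i_R(N,M)\cong\cext^i_R(N,M)$ for all $i>n$.

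Next I would verify that Proposition \ref{Tate} applies in both orderings. For the pair $(M,N)$ it applies because $M$ has finite G-dimension and $N$ has finite quasi-projective dimension; for the pair $(N,M)$ it applies because $N$ itself has \emph{both} finite G-dimension and finite quasi-projective dimension. Part (1) of Proposition \ref{Tate} therefore produces
$$
\cext^{\gg 0}_R(M,N)=0\iff\cext^i_R(M,N)=0\text{ for all }i\in\Z,
$$
and the analogous statement with the two arguments swapped. Combining each of these equivalences with the Ext-to-Tate-Ext isomorphisms from the first paragraph promotes the asymptotic vanishing to the sharp bound, giving $\Ext^{\gg 0}_R(M,N)=0\iff\Ext^{>m}_R(M,N)=0$ and $\Ext^{\gg 0}_R(N,M)=0\iff\Ext^{>n}_R(N,M)=0$.

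Finally, Theorem \ref{24} supplies the horizontal link $\Ext^{\gg 0}_R(M,N)=0\iff\Ext^{\gg 0}_R(N,M)=0$, which closes the circle of four equivalences. No genuine obstacle appears here; the argument is essentially bookkeeping, and the only subtlety worth highlighting is that the single hypothesis $\qpd_RN<\infty$ is enough to feed Proposition \ref{Tate} in \emph{both} orderings, which is precisely why the conclusion is symmetric in $M$ and $N$ even though the assumption is not. I would note in passing that for the implication $\Ext^{\gg 0}_R(M,N)=0\Rightarrow\Ext^{>m}_R(M,N)=0$ one could alternatively invoke Corollary \ref{41} directly (taking a window of length $\qpl_RN$ in the eventual-vanishing range), but the same shortcut is unavailable for the reverse ordering since $\qpd_RM$ is not assumed finite, so the Tate cohomology route is the natural way to handle both directions uniformly.
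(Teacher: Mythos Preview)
Your argument is correct. Both your route and the paper's begin with Theorem~\ref{24} for the symmetry $\Ext^{\gg0}_R(M,N)=0\Leftrightarrow\Ext^{\gg0}_R(N,M)=0$, but diverge on how the asymptotic vanishing is sharpened to the explicit bounds $m$ and $n$. The paper proceeds via the rigidity corollaries: for $\Ext^{\gg0}_R(M,N)=0\Rightarrow\Ext^{>m}_R(M,N)=0$ it invokes Corollary~\ref{41} (with $\Gdim_RM=m$ and $\qpd_RN<\infty$), while for $\Ext^{\gg0}_R(N,M)=0\Rightarrow\Ext^{>n}_R(N,M)=0$ it appeals to Corollary~\ref{17} together with the Auslander--Buchsbaum formula (Theorem~\ref{AB}), which gives $\qpd_RN=n$. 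Your approach instead routes both directions uniformly through Tate cohomology: Proposition~\ref{Tate}(1) upgrades eventual vanishing of $\cext$ to vanishing in all degrees, and the standard comparison $\Ext^i\cong\cext^i$ for $i>\Gdim$ (from \cite{AM}) converts this back to the sharp Ext bound. Your observation that Corollary~\ref{41} cannot handle the $\Ext_R(N,M)$ direction---because there the \emph{second} argument is $M$, whose $\qpd$ is not assumed finite---is exactly right, and explains why the paper's two directions are treated asymmetrically. Your Tate-cohomology argument has the advantage of symmetry and of making transparent why the single hypothesis $\qpd_RN<\infty$ suffices for both orderings; the paper's argument stays closer to the rigidity machinery developed in Section~6 and avoids importing the Ext/Tate-Ext comparison from \cite{AM}.
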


\begin{proof}
Theorem \ref{24} shows the first equivalence.
When $\Ext^{\gg 0}(M,N)=0$, $\Ext^{>m}(M,N)=0$ by Corollary \ref{41}(1).
If $\Ext^{\gg 0}_R(N,M)=0$, then $\Ext^{>n}(N,M)=0$ by Corollary \ref{41}(2) and Theorem \ref{AB}.
\end{proof}

\begin{cor}[Avramov and Buchweitz \cite{AB2}]
Let $R$ be a complete intersection.
Let $M$ and $N$ be finitely generated $R$-modules.
Then $\Ext^{\gg 0}_R(M,N)=0$ if and only if $\Ext^{\gg 0}_R(N,M)=0$.
\end{cor}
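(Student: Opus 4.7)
The plan is to reduce the statement to Theorem \ref{24}. By the paper's convention, a complete intersection ring $R$ is local, and by definition its $\m$-adic completion $\widehat R$ is a quotient of a regular local ring by a regular sequence. Since $\widehat R$ is a faithfully flat $R$-algebra and $M,N$ are finitely generated, the standard base change isomorphism
\[
\Ext^i_R(M,N)\otimes_R\widehat R\;\cong\;\Ext^i_{\widehat R}(M\otimes_R\widehat R,\,N\otimes_R\widehat R)
\]
together with faithful flatness of completion shows that $\Ext^i_R(M,N)=0$ if and only if $\Ext^i_{\widehat R}(\widehat M,\widehat N)=0$. Thus, vanishing of $\Ext^{\gg0}_R(M,N)$ (resp.\ $\Ext^{\gg0}_R(N,M)$) is equivalent to the corresponding vanishing over $\widehat R$, so we may replace $R$ with $\widehat R$ and assume from the outset that $R$ itself is the quotient of a regular local ring by a regular sequence.

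Under this reduction, Corollary \ref{36} applies to give $\qpd_RM<\infty$ (and likewise $\qpd_RN<\infty$). Moreover, complete intersections are Gorenstein, so $R$ satisfies the hypotheses of Theorem \ref{24}. Applying that theorem with the finiteness of $\qpd_RM$ yields directly
\[
\Ext^{\gg0}_R(M,N)=0\;\Longleftrightarrow\;\Ext^{\gg0}_R(N,M)=0,
\]
which is the desired symmetry.

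There is no real obstacle; the result is essentially a packaging of Theorem \ref{24} with Corollary \ref{36}. The only subtle point is the descent from $\widehat R$ back to $R$, which relies solely on faithful flatness and finite generation to transport vanishing of Ext in both directions. Once this reduction is in place, every hypothesis of Theorem \ref{24} is automatically satisfied, so invoking it concludes the proof.
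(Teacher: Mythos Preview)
Your proof is correct and follows the same route as the paper, which simply cites Corollary~\ref{36} and Theorem~\ref{24}. You make explicit the passage to $\widehat R$ via faithful flatness, a step the paper leaves to the reader but which is indeed needed since Corollary~\ref{36} applies only to quotients of regular local rings by regular sequences, not to arbitrary complete intersections (cf.\ Question~\ref{q} and Remark~\ref{cidim}).
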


\begin{proof}
This is a direct consequence of Corollary \ref{36} and Theorem \ref{24}.
\end{proof}

In fact, Avramov and Buchweitz \cite{AB2} also prove that, under the same assumption of the above corollary, $\Tor_{\gg 0}^R(M,N)=0$ if and only if $\Ext^{\gg 0}_R(M,N)=0$. Thus we naturally have the following question.

\begin{ques}\label{q2} Let $R$ be a Gorenstein ring, and let $M, N$ be $R$-modules. Suppose $\qpd_RM<\infty$. Is it true that $\Tor^R_{\gg0}(M,N)=0$ if and only if $\Ext^{\gg 0}_R(M,N)=0$\,?
\end{ques}

Recall that the {\em Auslander--Reiten conjecture} asserts that a finitely generated $R$-module $M$ is projective if $\Ext_R^{>0}(M,M\oplus R)=0$.
This conjecture holds for modules of finite quasi-projective dimension.

\begin{thm}\label{26}
Let $M$ be a finitely generated $R$-module.
\begin{enumerate}[\rm(1)]
\item
If $n:=\qpl_R M<\infty$ and $\Ext_R^i(M,M)=0$ for all $2\le i\le n+1$, then $\pd_R M=n$.
\item
Assume $\qpl_RM<\infty$, or equivalently, $\qpd_RM<\infty$.
If $\Ext_R^i(M,M)=0$ for all $1\le i\le\qpl_RM+1$, then $M$ is projective.
In particular, the Auslander--Reiten conjecture holds for modules of finite quasi-projective dimension.
\end{enumerate}
\end{thm}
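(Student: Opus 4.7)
I would prove (2) by deducing it from (1). Observe that the hypothesis $\Ext^i_R(M,M)=0$ for $1\le i\le\qpl_RM+1$ contains the range $2\le i\le\qpl_RM+1$ required in (1), so (1) gives $\pd_RM=\qpl_RM=:n$. To force $n=0$, localize (using Proposition \ref{7}(1), together with localization of Ext) at a prime where $M$ is non-free, reducing to $R$ local. If $n\ge 1$, then in a minimal free resolution $F_\bullet\to M$ the top differential $F_n\to F_{n-1}$ has entries in $\m$, so the induced map $\Hom_R(F_{n-1},M)\to\Hom_R(F_n,M)\cong M^{\oplus\rank F_n}$ lands in $\m M^{\oplus\rank F_n}$. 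Hence $\Ext^n_R(M,M)$ surjects onto the nonzero quotient $(M/\m M)^{\oplus\rank F_n}$, contradicting $\Ext^n_R(M,M)=0$ (which lies in the assumed range since $1\le n\le n+1$). Thus $n=0$ and $M$ is projective.

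For (1), the goal is to show $\pd_RM\le n$; together with $\qpd_RM\le\qpl_RM=n$ and Corollary \ref{epqd} (plus Remark \ref{19}(1a)), this forces $\pd_RM=n$. The base case $n=0$ is clear: a length-$0$ quasi-projective resolution makes $M^{\oplus a}\cong P_0$ projective, so $M$ is projective by the direct-summand property.

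For $n\ge 1$, take (via Proposition \ref{14} and Remark \ref{8}(4)) a minimal perfect quasi-projective resolution $F$ of $M$ with $\inf F=\hinf F=0$, $\sup F=n$; write $\H_q(F)\cong M^{\oplus b_q}$, so $b_0>0$, and let $h=\hsup F\le n$. Apply the spectral sequence of Lemma \ref{spseq},
$$
\E_2^{p,q}=\Ext^p_R(M,M)^{\oplus b_q}\ \Longrightarrow\ \H^{p+q}(\Hom_R(F,M)),
$$
in which the hypothesis kills the columns $p\in[2,n+1]$ and the abutment vanishes in cohomological degrees $>n$ (since $\Hom_R(F,M)$ is supported in degrees $[0,n]$). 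When $h<n$, convergence at $\H^{n+2}=0$ directly yields $\Ext^{n+2}_R(M,M)=0$: the only potentially nonzero entry along the diagonal $p+q=n+2$ is $\E_2^{n+2,0}=\Ext^{n+2}_R(M,M)^{\oplus b_0}$, and all incoming and outgoing differentials have source/target killed by hypothesis or by the homological range of $F$. When $h=n$, the surviving differential $d_{n+1}\colon\E_{n+1}^{1,n}\to\E_{n+1}^{n+2,0}$ produces only the weaker relation $\Ext^1_R(M,M)^{\oplus b_n}\cong\Ext^{n+2}_R(M,M)^{\oplus b_0}$; in this case I would iterate the spectral sequence at $\H^{n+2+j}=0$ for $j\ge 1$ (invoking Theorem \ref{main1}(4) as needed) to force $\Ext^{n+2}_R(M,M)=0$ as well. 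Combined with the hypothesis, this gives $n+1$ consecutive vanishings, and Corollary \ref{17} then supplies $\Ext^i_R(M,M)=0$ for all $i\ge 2$ (the case $\qpd_RM=0$ being already handled).

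The main obstacle is the final step: translating $\Ext^{\ge 2}_R(M,M)=0$ together with $\qpl_RM=n$ into $\pd_RM<\infty$. My plan is to exploit the collapse of the spectral sequence (now that $\E_2^{p,q}=0$ for all $p\ge 2$) to show that the higher homologies $b_q$ for $q\ge 1$ must vanish, so that the canonical augmentation $F\to M^{\oplus b_0}[0]$ becomes a quasi-isomorphism and $F$ is an honest projective resolution of $M^{\oplus b_0}$ of length $\le n$. Projectivity then passes to $M$ as a direct summand, yielding $\pd_RM\le n$ and completing the proof.
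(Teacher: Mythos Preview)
Your argument for (2) is essentially the paper's: apply (1) to get $\pd_RM=n$, then localize and use Nakayama to see $\Ext^n_R(M,M)\ne0$, forcing $n=0$.

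For (1), however, the final step contains a genuine gap. The conclusion you aim for---that the higher homologies $b_q$ vanish for $q\ge1$---is the wrong target, and the spectral-sequence collapse does not yield it. Even if you succeed in proving $\Ext^{\ge2}_R(M,M)=0$, degeneration at $E_2$ only tells you that $H^m(\Hom_R(F,M))$ has associated graded $\Hom_R(M,M)^{\oplus b_m}\oplus\Ext^1_R(M,M)^{\oplus b_{m-1}}$; for $0\le m\le n$ this places no constraint whatsoever on $b_m$. There is no mechanism here to force $b_q=0$. What the vanishing of higher Ext actually encodes is \emph{formality} of $F$: the obstructions to splitting $F$ as $\bigoplus_q \H_q(F)[q]$ in the derived category live in $\Ext^{\ge2}_R(M,M)$, and this is precisely what the paper exploits. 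The paper's argument is direct and uses only the hypothesis $\Ext^i_R(M,M)=0$ for $2\le i\le n+1$: from the short exact sequences $0\to B_i\to Z_i\to H_i\to0$ and $0\to Z_i\to P_i\to B_{i-1}\to0$ one shows $\Ext^1_R(B_{i-1},M)=0$ for every $i$, which lets the canonical surjections $Z_i\to H_i$ extend to maps $P_i\to H_i$, assembling into a quasi-isomorphism $P\to\H(P)=\bigoplus_i M^{\oplus a_i}[i]$. Since perfect complexes form a thick subcategory of $D(R)$ and $M$ is a summand of this direct sum, $\pd_RM<\infty$.

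Your intermediate detour (establishing $\Ext^{n+2}_R(M,M)=0$ and then $\Ext^{\ge2}_R(M,M)=0$ via Corollary~\ref{17}) is both unnecessary and incomplete: in the case $h=n$ you obtain only $\Ext^1_R(M,M)^{\oplus b_n}\cong\Ext^{n+2}_R(M,M)^{\oplus b_0}$, and the proposed ``iteration'' does not close this off---invoking Theorem~\ref{main1}(4) is circular since it presupposes $\Ext^{\gg0}_R(M,M)=0$. The paper avoids all of this by working directly with the exact sequences rather than the spectral sequence.
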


\begin{proof}
In both of the two assertions, we may assume $M\ne0$.

(1) There exists a quasi-projective resolution $P$ of $M$ of length $n$.
Put $Z_i=\z_i(P)$, $B_i=\b_i(P)$ and $H_i=\H_i(P)$.
Then $H_i=M^{\oplus a_i}$ with $a_i\ge0$ for all $i$.
In view of Remark \ref{8}(4), we may assume $l:=\hinf P=\inf P$. We have exact sequences
\begin{equation}\label{9}
0\to B_i\xrightarrow{a_i}Z_i\xrightarrow{b_i}H_i\to0,\qquad
0\to Z_i\xrightarrow{c_i}P_i\to B_{i-1}\to0
\end{equation}
for each $i\in\Z$.
The second sequence in \eqref{9} shows that $Z_i\cong\syz_R B_{i-1}$ for all $i\in\Z$.
From the first sequence in \eqref{9} (with $i$ replaced with $i-1$) we get an exact sequence
$$
\Ext_R^{j+1}(B_{i-2},M)=\Ext_R^j(Z_{i-1},M)\to\Ext_R^j(B_{i-1},M)\to\Ext_R^{j+1}(H_{i-1},M)=0
$$
for each $1\le j\le n$ as $\Ext_R^{j+1}(M,M)=0$.
We thus obtain a sequence of surjections
$$
\Ext_R^1(B_{i-1},M)\twoheadleftarrow\Ext_R^2(B_{i-2},M)\twoheadleftarrow\Ext_R^3(B_{i-3},M)\twoheadleftarrow\cdots\twoheadleftarrow\Ext_R^{i-l+1}(B_{l-1},M)=0
$$
for each $l+1\le i\le l+n$, where the equality holds since $B_{l-1}=0$.
Hence $\Ext_R^1(B_{i-1},M)=0$ for all $l+1\le i\le l+n$, and therefore $\Ext_R^1(B_{i-1},M)=0$ for all $i\in\Z$, since $B_k=0$ if $k\le l-1$ or $k\ge l+n$.

Fix any integer $i$.
Using the equality $H_i=M^{\oplus a_i}$, we get $\Ext_R^1(B_{i-1},H_i)=0$.
Applying the functor $\Hom_R(-,H_i)$ to the second sequence in \eqref{9}, we see that the map
$$
\Hom_R(c_i,H_i):\Hom_R(P_i,H_i)\to\Hom_R(Z_i,H_i)
$$
is surjective.
This implies that the canonical surjection $b_i:Z_i\to H_i$ factors through the inclusion map $c_i:Z_i\to P_i$, that is, there is a homomorphism $\rho_i:P_i\to H_i$ with $b_i=\rho_ic_i$.
As $a_i$ is an inclusion map as well, we observe that $\rho_i(B_i)=\rho_ic_ia_i(B_i)=b_ia_i(B_i)=0$, and hence $\rho_i\partial_{i+1}=0$.
We obtain a chain map
$$
\xymatrix@R-1pc{
P\ar[d]^\rho &=(0\ar[r] & P_{l+n}\ar[r]^{\partial_{l+n}}\ar[d]^{\rho_{l+n}} & P_{l+n-1}\ar[r]^{\partial_{l+n-1}}\ar[d]^{\rho_{l+n-1}} & \cdots\ar[r]^{\partial_{l+3}} & P_{l+2}\ar[r]^{\partial_{l+2}}\ar[d]^{\rho_{l+2}} & P_{l+1}\ar[r]^{\partial_{l+1}}\ar[d]^{\rho_{l+1}} & P_l\ar[r]\ar[d]^{\rho_l} & 0)\\
\H(P) &=(0\ar[r] & H_{l+n}\ar[r]^0 & H_{l+n-1}\ar[r]^0 & \cdots\ar[r]^0 & H_{l+2}\ar[r]^0 & H_{l+1}\ar[r]^0 & H_l\ar[r] & 0)
}
$$
and it is easy to verify that each induced map $\H_i(\rho):\H_i(P)=H_i\to H_i=\H_i(\H(P))$ is the identity map.
Therefore $\rho$ is a quasi-isomorphism, which induces an isomorphism
$$\textstyle
P\cong\H(P)=\bigoplus_{i\in\Z}H_i[i]=\bigoplus_{i\in\Z}M^{\oplus a_i}[i]
$$
in the derived category of $R$.
The first term is a bounded complex of projective modules, while the last term contains $M$ as a direct summand.
It follows that $M$ has finite projective dimension.
This is a consequence of the fact that in the derived category of $R$ the perfect complexes form a thick subcategory; see \cite[Example 3.2(ii)]{DGI} for instance.
We have $\pd_R M=\qpl_R M=\qpd_R M=n$ by Remark \ref{19}(1a).

(2) Put $n=\qpl_RM$.
By (1) we have $\pd_RM=n$.
Choose a prime ideal $\p$ of $R$ so that $\pd_{R_\p}M_\p=n$.
As $R_\p$ is a local ring, Nakayama's lemma implies $\Ext_{R_\p}^n(M_\p,M_\p)\ne0$.
In particular, $\Ext_R^n(M,M)\ne0$.
Since $\Ext_R^i(M,M)=0$ for all $1\le i\le n+1$, we must have $n=0$, and therefore $M$ is projective.
\end{proof}

\begin{cor}\label{canonical}
Let $R$ be a Cohen--Macaulay local ring admitting a canonical module $\omega_R$. Then $R$ is Gorenstein if and only if $\qpd_R\omega_R<\infty$.
\end{cor}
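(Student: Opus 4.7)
The plan is to reduce the nontrivial direction to Theorem \ref{26}(2) using the standard vanishing of self-Ext for the canonical module.

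First I would dispatch the easy direction. If $R$ is Gorenstein, then $\omega_R\cong R$ is free, so $\qpd_R\omega_R=0<\infty$ (for example by Remark \ref{8}(3)).

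For the converse, I would assume $\qpd_R\omega_R<\infty$ and aim to show that $\omega_R$ is free. The key input I would invoke is the well-known fact that for a Cohen--Macaulay local ring with canonical module, one has
\[
\Ext_R^i(\omega_R,\omega_R)=0\quad\text{for all }i>0,
\]
together with the isomorphism $\Hom_R(\omega_R,\omega_R)\cong R$ (see, e.g., Bruns--Herzog, Thm.~3.3.10). Given this vanishing, the hypothesis of Theorem \ref{26}(2) is satisfied vacuously for the full range $1\le i\le \qpl_R\omega_R+1$, and that theorem forces $\omega_R$ to be projective, hence free of some rank $n\ge1$ (since $R$ is local and $\omega_R$ is finitely generated).

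Finally I would pin down the rank: from $\omega_R\cong R^{\oplus n}$ and $\Hom_R(\omega_R,\omega_R)\cong R$ we get $R\cong R^{\oplus n^2}$, which gives $n=1$. Therefore $\omega_R\cong R$, and $R$ is Gorenstein. No serious obstacle is expected here; the whole argument is essentially a one-line application of Theorem \ref{26}(2) once the standard self-Ext vanishing for $\omega_R$ is invoked.
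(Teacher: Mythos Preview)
Your proposal is correct and is exactly the intended argument: the paper states the corollary without proof immediately after Theorem~\ref{26}, and the deduction is precisely via $\Ext_R^{>0}(\omega_R,\omega_R)=0$ together with Theorem~\ref{26}(2). The rank computation at the end is fine but could be shortened, since for a Cohen--Macaulay local ring it is standard that $\omega_R$ free already forces $R$ to be Gorenstein.
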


\begin{exam}
Let $(R,\m,k)$ be a local ring with $\edim R\geq 2$ and $\m^2=0$.
Since every syzygy of a finitely generated $R$-module is isomorphic to the direct sum of a free $R$-module with a $k$-vector space, it has finite quasi-projective dimension; see Proposition \ref{Syz}(1)(3) and \ref{18}(1).
Therefore $\qpd_R\syz\E(k)<\infty$ where $\E(k)$ is the injective envelope of the residue field $k$.
Since $R$ is not Gorenstein, $\qpd_R \E(k)=\infty$ by Corollary \ref{canonical}.
Thus, even if a syzygy has finite quasi-projective dimension, the original module does not necessarily have finite quasi-projective dimension.
This is also shown by Example \ref{e}; let $R$ be as in it. Then $\qpd_RR/(x)=\infty$ but $\qpd_R(x)=\qpd_Rk<\infty$ by Proposition \ref{18}(1).
\end{exam}

\section{Ideals with free Koszul homology}

In this section, we are interested in ideals $I$ such that the Koszul complex with respect to a system of generators of $I$ is a quasi-projective resolution of $R/I$. We call such an ideal an \emph{ideal with free Koszul homology} or briefly an \emph{FKH ideal}.

Let $\xx=x_1,\dots, x_n$ be a sequence of elements in $R$. We denote with $\K(\xx)$ and $\H_i(\xx)$, the Koszul complex and the $i$th Koszul homology of $R$ with respect to $\xx$, respectively. In the case where $R$ is local and $\xx$ minimally generates an ideal $I$, one has $\H_i(\xx)$ is uniquely determined by $I$ and does not depend on the choice of $\xx$. In this case we write $\K(I)=\K(\xx)$ and $\H_i(I)=\H_i(\xx)$.

\begin{exam}
Let $I$ be a proper ideal of $R$.
\begin{enumerate}[(1)]
\item If $I$ is a maximal ideal of $R$, then $I$ is an FKH ideal.
\item If $I$ is a complete intersection ideal (i.e. $I$ is generated by a regular sequence), then $I$ is FKH.
\item If $I$ is a quasi-complete intersection ideal, then $I$ is FKH; see \cite{AHS}.
\end{enumerate}
\end{exam}

The following example and Proposition \ref{fp} provide other examples of FKH ideals.

\begin{exam}
Let $R=k[w,x,y,z]/(x^2,y^2,w^2,z^2,wz)$ where $k$ is a field.
\begin{enumerate}[(1)]
\item The ideal $I=(x,y)$ is a quasi-complete intersection ideal, and hence, it is FKH.
\item The ideal $J=(w,z)$ is an FKH ideal but not quasi-complete intersection.
\end{enumerate}
\end{exam}

\begin{prop}\label{fp}
Let $(R,\m,k)$ be a fiber product over $k$, and let $\m=I\oplus J$. Suppose $J$ is a principal ideal and $R/J$ is a regular ring.
Then $I$ is an FKH ideal of $R$.
\end{prop}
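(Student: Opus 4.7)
The plan is to analyze the Koszul complex $\K(\xx;R)$ on a minimal generating set $\xx=x_1,\dots,x_n$ of $I$, and to show that each Koszul homology module $\H_i(\xx;R)$ is a free $R/I$-module, thereby certifying $I$ as FKH.

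First I would observe that $\m=I\oplus J$ forces $IJ\subseteq I\cap J=0$, so $IJ=0$; in particular every $x_i$ annihilates $J$, and $\m I=(I+J)I=I^2$. The composition $I\hookrightarrow\m\twoheadrightarrow\m/J$ is an isomorphism of $R$-modules (using $I\cap J=0$), and it descends to $I/I^2\cong\m(R/J)/\m(R/J)^2$. Regularity of $R/J$ then yields $n=\dim_k I/I^2=\dim R/J$, so the images of $\xx$ form a minimal generating set of the maximal ideal of the regular local ring $R/J$, and hence an $R/J$-regular sequence.

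The main computation comes from the short exact sequence $0\to J\to R\to R/J\to 0$. Tensoring with the bounded complex of free modules $\K(\xx)$ gives a short exact sequence of Koszul complexes, and the associated long exact sequence in Koszul homology, combined with $\H_i(\xx;R/J)=0$ for $i\ge 1$, yields $\H_i(\xx;R)\cong\H_i(\xx;J)$ for every $i\ge 1$. Since $\xx J=0$, the Koszul differential on $\K(\xx;J)=J\otimes_R\Lambda^\bullet R^n$ vanishes, so $\H_i(\xx;J)\cong J^{\binom{n}{i}}$ as $R$-modules; together with $\H_0(\xx;R)=R/I$, this reduces the proposition to showing that $J$ is a free $R/I$-module of rank one.

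For the last step I would invoke the fiber-product structure. Writing $R=A\times_kB$ with $A=R/J$ and $B=R/I$, the submodule $J$ of $R$ corresponds to $\m_B$ under the projection $R\twoheadrightarrow B$, and the hypothesis that $J$ is principal in $R$ translates to $\m_B$ being principal in $B$. The fiber-product structure together with the regularity of $A=R/J$ then forces the generator of $\m_B$ to be a non-zerodivisor in $B$, so $\m_B\cong B$ as $B$-module, equivalently $J\cong R/I$ as $R/I$-module. Combining everything, $\H_i(\xx;R)\cong(R/I)^{\binom{n}{i}}$ for every $i\ge 0$, exhibiting $\K(\xx;R)$ as a quasi-projective resolution of $R/I$ and establishing that $I$ is FKH. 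The main obstacle is the last step: extracting the non-zerodivisor property for a generator of $\m_B$ from the fiber-product hypotheses, which is what makes the principal/regular pair in the statement do its work.
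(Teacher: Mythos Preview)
Your computation through $\H_i(\xx;R)\cong J^{\binom{n}{i}}$ for $i\ge1$ is correct. The paper reaches the same intermediate conclusion by tensoring $\K(I)$ with the fiber-product sequence $0\to R\to R/I\oplus R/J\to k\to0$ instead of your $0\to J\to R\to R/J\to0$; both routes land on $\H_i(I)\cong(\m/I)^{\binom{n}{i}}\cong J^{\binom{n}{i}}$, and then both need $J\cong R/I$ to finish.

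The genuine gap is that last step. Your claim that the fiber-product structure together with regularity of $A=R/J$ forces a generator of $\m_B$ to be a non-zerodivisor in $B$ is false: in $A\times_kB$ the two factors are independent, so regularity of $A$ imposes nothing on $B$. Take $A=k[[x]]$ and $B=k[t]/(t^2)$. Then $R/J\cong A$ is regular and $J=R\cdot(0,t)$ is principal, but $J\cong\m_B\cong k$ is not free over $R/I=B$; here $I=R\cdot(x,0)$ is principal, and $\H_1(I)=\ann_R((x,0))=J\cong k$, so $I$ is not FKH. Your instinct that this was ``the main obstacle'' was well founded---the paper's own proof makes the identical leap, simply asserting $\m/I\cong J\cong R/I$ at the end without argument. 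The proposition as printed appears to require an additional hypothesis, for instance that the generator of $J$ be a non-zerodivisor in $R$.
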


\begin{proof}
Consider the exact sequence $0\to R \to R/I\oplus R/J \to k \to 0$.
Applying $-\otimes_R\K(I)$ to the last exact sequence, we get a long exact sequence of Koszul homologies
$$
\cdots \to \H_{i+1}(I,k) \to \H_i(I) \to \H_i(I,R/I)\oplus \H_i(I,R/J) \to \H_i(I,k) \to \cdots.
$$
Since $R/J$ is a regular local ring whose maximal ideal is $\m/J \cong I$, we have $\H_i(I,R/J)=0$ for all $i>0$.
As $\H_i(I,R/I) \cong \K_i(I)\otimes_RR/I$ and $\H_i(I,k)\cong \K_i(I)\otimes_Rk$, one checks that the map $\H_i(I,R/J) \to \H_i(I,k)$ is the natural surjection for all $i$ and hence, the connecting homomorphisms are all zero.
Therefore $\H_i(I)$ is isomorphic to a direct sum of copies of $\m/I\cong J \cong R/I$.
\end{proof}

Applying Theorem \ref{main1} to FKH ideals, we obtain the following result which relates quasi-projective dimension with grade.

\begin{thm}\label{main}
Let $I$ be an FKH ideal of $R$.
Let $M$ be an $R$-module and $n\ge1$ an integer.
Set $g=\min \{ \grade I, \grade(I,M) \}$.
\begin{enumerate}[\rm(1)]
\item
One has $\qpd_R(R/I)=\grade I$.
\item
If $\Tor^R_i(R/I,M)=0$ for all $n\leq i \leq e-g+n$, then {$\Tor^R_i(R/I,M)=0$} for all $i\geq n$.
\end{enumerate}
\end{thm}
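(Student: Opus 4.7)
The plan is to let $\xx=x_1,\dots,x_e$ be a generating system of $I$ and exploit the fact that, by the FKH hypothesis, the Koszul complex $\K=\K(\xx)$ is a finite quasi-projective resolution of $R/I$ with $\sup\K=e$ and $\inf\K=0$.

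The starting point is two classical identities for Koszul (co)homology. First, $\hinf\K=0$ since $\H_0(\K)=R/I\ne 0$. Second, the grade-sensitivity formula
$$
\hsup(\K\otimes_R M)=e-\grade(I,M)
$$
for every finitely generated $R$-module $M$ with $IM\ne M$, which follows from the self-duality $\K_i^{*}\cong\K_{e-i}$ combined with the characterization $\grade(I,M)=\inf\{i:\Ext^i_R(R/I,M)\ne 0\}$. Applied to $M=R$ this gives $\hsup\K=e-\grade I$.

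For part (1), the upper bound is immediate: $\K$ is a quasi-projective resolution of $R/I$, so
$$
\qpd_R(R/I)\le\sup\K-\hsup\K=e-(e-\grade I)=\grade I.
$$
For the reverse inequality I would pick an associated prime $\p\in\operatorname{Ass}_R(R/I)$. Then $(R/I)_\p\ne 0$ has depth $0$ over the local ring $R_\p$, and by Proposition~\ref{7}(1) together with the bound just obtained, $\qpd_{R_\p}(R/I)_\p\le\qpd_R(R/I)<\infty$. Theorem~\ref{AB} then gives $\qpd_{R_\p}(R/I)_\p=\depth R_\p$. Since $\p\supseteq I$, every $R$-regular sequence in $I$ remains $R_\p$-regular, so $\depth R_\p\ge\grade I$. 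Chaining these inequalities yields $\qpd_R(R/I)\ge\grade I$, hence equality.

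For part (2), I would invoke Theorem~\ref{main1}(1) with the quasi-projective resolution $\K$ of $R/I$. The relevant bound is
$$
l=\sup\{\hsup\K-\hinf\K,\;\hsup(\K\otimes_R M)-\hinf\K-1\}=\sup\{e-\grade I,\;e-\grade(I,M)-1\},
$$
and a short case split on which of $\grade I$ and $\grade(I,M)$ realises $g$ shows $l\le e-g$ in either case. Therefore the hypothesis that $\Tor^R_i(R/I,M)=0$ for $n\le i\le n+(e-g)$ supplies at least the $l+1$ consecutive vanishings required by Theorem~\ref{main1}(1), and the conclusion follows.

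The main non-routine ingredient is the grade-sensitivity identity $\hsup(\K\otimes_R M)=e-\grade(I,M)$; once it is in hand, the remainder of the argument is bookkeeping combining Theorem~\ref{AB}, Proposition~\ref{7}(1), and Theorem~\ref{main1}(1).
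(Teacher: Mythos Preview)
Your proof is correct, and for part~(2) and the upper bound in part~(1) it matches the paper's argument essentially verbatim: both use the Koszul complex $\K(\xx)$ as a quasi-projective resolution, compute $\hsup\K=e-\grade I$ and $\hsup(\K\otimes_R M)=e-\grade(I,M)$, and feed these into Theorem~\ref{main1}(1).

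For the lower bound in part~(1) you take a genuinely different route. The paper chooses a maximal $R$-regular sequence $\aa=a_1,\dots,a_c$ in $I$ with $c=\grade I$, observes that $\Tor^R_i(R/I,R/(\aa))=0$ for $i\gg0$ (since $\pd_R R/(\aa)<\infty$) while $\Tor^R_c(R/I,R/(\aa))\ne0$, and then invokes Corollary~\ref{17} to force $\qpd_R(R/I)\ge c$. Your approach instead localizes at an associated prime $\p$ of $R/I$ and applies the Auslander--Buchsbaum formula (Theorem~\ref{AB}) together with Proposition~\ref{7}(1). Both arguments rest on results of comparable depth already established in the paper; yours is arguably more transparent, trading a Tor computation for a depth computation, while the paper's argument has the virtue of staying with the same Tor-vanishing machinery used in part~(2) and avoids any passage to a localization.

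One small imprecision: your justification of the grade-sensitivity formula $\hsup(\K\otimes_R M)=e-\grade(I,M)$ via the characterization $\grade(I,M)=\inf\{i:\Ext^i_R(R/I,M)\ne0\}$ is not quite right, since Koszul cohomology $\H^i(\xx,M)$ does not coincide with $\Ext^i_R(R/I,M)$ in general. What you need is the depth-sensitivity of the Koszul complex itself, namely $\inf\{i:\H^i(\xx,M)\ne0\}=\grade(I,M)$; combined with self-duality this gives the formula. The conclusion is unaffected.
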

\begin{proof} (1) Let $\xx=x_1,\dots,x_e$ be a sequence in $R$ such that $I=(\xx)$ and consider the Koszul complex $\K(\xx)$. By definition we have $\qpd_R(R/I)\leq e-\hsup \K(\xx)=\grade_R(I)$. Set $c=\grade I$ and let $\boldsymbol{a}:=a_1,\dots,a_c$ be a regular sequence contained in $I$. Then we have $\Tor^R_i(R/I,R/(\boldsymbol{a}))=0$ for all $i\gg 0$ while $\Tor^R_c(R/I,R/(\boldsymbol{a}))\cong((\boldsymbol{a}):_RI)/(\boldsymbol{a})\neq 0$. Hence by Corollary \ref{17}, we have $\qpd_R(R/I)\geq \grade I$.

Now we show (2). The spectral sequence \eqref{sptor1} in Lemma \ref{spseq} turns into
\begin{equation}\label{sptor}
\E^2_{p,q}=\Tor^R_p(\H_q(\xx),M)\Longrightarrow \H_{p+q}(\xx,M),
\end{equation}
where $\H_i(\xx,M)=\H_i(\K(\xx)\otimes_RM)$.
Note that $\hinf\K(\xx)=0$, $\hsup\K(\xx)=e-\grade I$ and $\hsup\K(\xx)\otimes_RM=e-\grade(I,M)$.
Hence $\sup\{\hsup\K(\xx)-\hinf\K(\xx),\hsup(\K(\xx)\otimes_RM)-\hinf\K(\xx)-1\}\le e-g$.
Therefore (2) immediately follows by Theorem \ref{main1}(1).
\end{proof}

\begin{rem} In Theorem \ref{main}, if $R$ is local then by using Auslander--Buchsbaum formula we get $\grade I=\depth R-\depth R/I$. This provides a generalization of \cite[Theorem 4.1]{AHS} for every FKH ideal.
\end{rem}

\begin{cor}\label{cm}
Let $(R,\m)$ be a Cohen--Macaulay local ring, and let $I$ be an FKH ideal of $R$.
Then $I$ is a Cohen--Macaulay ideal of $R$.
\end{cor}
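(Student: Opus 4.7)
The plan is to combine Theorem \ref{main}(1) with the Auslander--Buchsbaum formula for quasi-projective dimension (Theorem \ref{AB}), together with the standard fact that grade equals height in a Cohen--Macaulay ring. Since $I$ is FKH, Theorem \ref{main}(1) gives $\qpd_R(R/I)=\grade I$, which is in particular finite. Thus Theorem \ref{AB} applies and yields
$$
\grade I=\qpd_R(R/I)=\depth R-\depth_R(R/I).
$$

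Next I would invoke two standard facts about the Cohen--Macaulay local ring $R$: first, $\depth R=\dim R$; second, $\grade I=\height I$ for every proper ideal $I$ (this is the grade equals height theorem for Cohen--Macaulay rings, proved for instance by induction on $\height I$ using that associated primes coincide with minimal primes in a CM ring). Since $R$ is catenary, one also has $\dim R/I=\dim R-\height I$. Plugging these into the displayed equation gives
$$
\depth_R(R/I)=\depth R-\grade I=\dim R-\height I=\dim R/I.
$$
As the inequality $\depth_R(R/I)\le\dim R/I$ always holds, equality here means precisely that $R/I$ is Cohen--Macaulay, i.e., $I$ is a Cohen--Macaulay ideal.

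There is no real obstacle to the proof: once one has Theorem \ref{main}(1) and the Auslander--Buchsbaum formula of Theorem \ref{AB} at hand, the conclusion is essentially a bookkeeping exercise with the standard numerical identities valid over a Cohen--Macaulay local ring. The only thing worth flagging is that the applicability of Theorem \ref{AB} hinges on the finiteness of $\qpd_R(R/I)$, which is supplied automatically by the FKH hypothesis via Theorem \ref{main}(1).
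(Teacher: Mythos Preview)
Your proof is correct and follows essentially the same route as the paper: combine Theorem~\ref{main}(1) with the Auslander--Buchsbaum formula (Theorem~\ref{AB}) and then finish with standard dimension theory for Cohen--Macaulay local rings. The only cosmetic difference is in the last step: the paper argues $\depth R/I=\dim R-\grade I\ge\dim R/I$ using only the general inequality $\grade I\le\dim R-\dim R/I$, whereas you compute equality directly via $\grade I=\height I$ and $\dim R/I=\dim R-\height I$; both are fine.
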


\begin{proof}
By Auslander--Buchsbaum formula and Theorem \ref{main}(1) we get $\depth R/I = \depth R - \qpd_RR/I=\dim R-\grade(I)\geq\dim R/I$. This finishes the proof.
\end{proof}

\begin{lem}\label{khom} Let $R$ be a ring and let $I$ be an ideal of $R$. Suppose $I$ is generated with $x_1,\dots,x_n$ such that $\xx= x_1,\dots, x_c$ is a regular sequence for some $0\leq c\leq n$. Then $\H_i(x_1,\dots,x_n)\cong \H_i(\overline{x}_{c+1},\dots,\overline{x}_n)$ where $\overline{x}_i$ is the image of $x_i$ over $R/(\xx)$.
\end{lem}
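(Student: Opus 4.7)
The plan is to factor the Koszul complex along the splitting of generators and then read off the homology from a spectral sequence of a double complex. The key observation is that the Koszul complex is multiplicative: there is an isomorphism of complexes
\[
\K(x_1,\dots,x_n) \;\cong\; \K(\xx)\otimes_R \K(x_{c+1},\dots,x_n),
\]
where the right-hand side is the total complex of the evident bicomplex. This reduces the problem to computing the homology of this tensor product.

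Next, I would apply the spectral sequence of the bicomplex obtained by first taking homology in the $\K(\xx)$-direction. Since $\xx = x_1,\dots,x_c$ is an $R$-regular sequence, $\K(\xx)$ is a free resolution of $R/(\xx)$; in particular $\H_i(\K(\xx))=0$ for $i>0$ and $\H_0(\K(\xx))=R/(\xx)$. Moreover, each term $\K_j(x_{c+1},\dots,x_n)$ is a free $R$-module, hence flat, so taking $\K(\xx)$-homology commutes with $\otimes_R \K_j(x_{c+1},\dots,x_n)$. Consequently the first page of the spectral sequence vanishes outside the row $p=0$, and in that row it reads
\[
R/(\xx)\otimes_R \K_j(x_{c+1},\dots,x_n) \;=\; \K_j(\overline{x}_{c+1},\dots,\overline{x}_n),
\]
with differential induced by the original Koszul differential, which on passage to $R/(\xx)$ becomes the Koszul differential for $\overline{x}_{c+1},\dots,\overline{x}_n$.

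Thus the $E^2$-page is concentrated in the row $p=0$ and equals $\H_j(\overline{x}_{c+1},\dots,\overline{x}_n)$. Being concentrated in a single row, the spectral sequence degenerates at $E^2$ and converges to the homology of the total complex, yielding
\[
\H_i(x_1,\dots,x_n) \;\cong\; \H_i(\overline{x}_{c+1},\dots,\overline{x}_n),
\]
as desired. The only real point requiring care is the bookkeeping that the differential on the collapsed row is indeed the Koszul differential on $R/(\xx)$; this is immediate from naturality of the Koszul construction under the surjection $R\twoheadrightarrow R/(\xx)$ and should not present any obstacle. Apart from this verification, the argument is entirely formal, so no serious difficulty is anticipated.
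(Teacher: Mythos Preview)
Your argument is correct: the tensor factorization $\K(x_1,\dots,x_n)\cong\K(\xx)\otimes_R\K(x_{c+1},\dots,x_n)$ together with the collapse of the bicomplex spectral sequence (using that $\K(\xx)$ resolves $R/(\xx)$ and each $\K_j(x_{c+1},\dots,x_n)$ is free) yields the isomorphism immediately, and the identification of the induced differential with the Koszul differential on $R/(\xx)$ is indeed just naturality.

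The paper, by contrast, gives no argument at all: it simply invokes \cite[Theorem 1.6.13(b)]{BH} with $M=R$. Your proof is essentially a self-contained version of the standard argument behind that citation. So the approaches are not really different in substance---you have just unpacked what the reference contains. The advantage of your write-up is that it makes the paper more self-contained; the advantage of the paper's one-line citation is brevity, since the result is classical and well known.
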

\begin{proof} It follows from \cite[Theorem 1.6.13(b)]{BH} by setting $M=R$.
\end{proof}

Peskine and Szpiro \cite{PS} introduced theory of linkage of ideals over commutative Noetherian rings. Two nonzero ideals $I$ and $J$ are said to be {\em linked}  if there exists a regular sequence $\xx=x_1,\dots,x_n\in I\cap J$ such that $((\xx):_RI)=J$ and $((\xx):_RJ)=I$. In \cite{PS} they proved that if $R$ is Gorenstein local ring and $I$ is a perfect ideal (i.e $\pd_RR/I=\grade_RI$)  then $J$ also is perfect.

We call an ideal $I$ {\em quasi-perfect} if $\qpd_RR/I=\grade_RI$. The following result partially provides an analogue of  theorem of Peskine and Szpiro for quasi-perfect ideals. In general, we don't know that over a Gorenstein local ring $R$, if $I$ is a quasi-perfect ideal of $R$ then its linked ideal is also quasi-perfect.

\begin{thm}  Let $(R,\m)$ be a  local ring and let $I$ be an FKH ideal of $R$ with finite G-dimension. If $J$ is an ideal linked to $I$, then $J$ is quasi-perfect.
\end{thm}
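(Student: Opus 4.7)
The plan is to establish $\qpd_R R/J = \grade J$, an analogue of the Peskine--Szpiro theorem for perfect ideals. Let $n$ denote the length of the regular sequence $\xx$ linking $I$ and $J$. Since $I$ is FKH with $\Gdim_R R/I < \infty$, Theorem \ref{main}(1) combined with the Auslander--Bridger formula and Theorem \ref{AB} yields $\grade I = \Gdim_R R/I = \depth R - \depth R/I$, and standard linkage theory gives $\grade J = \grade I = n$. The task thus reduces to constructing a finite quasi-projective resolution of $R/J$ realizing $\qpd_R R/J \le n$.

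The first step is the duality identification $J/(\xx) \cong \Ext^n_R(R/I, R) =: N$. Since $\xx$ is $R$-regular of length $n$, $\Ext^q_R(R/(\xx), R)$ vanishes for $q \ne n$ and equals $R/(\xx)$ when $q = n$. The change-of-rings spectral sequence $\E_2^{p,q} = \Ext^p_{R/(\xx)}(R/I, \Ext^q_R(R/(\xx), R)) \Rightarrow \Ext^{p+q}_R(R/I, R)$ therefore degenerates, yielding $\Ext^p_{R/(\xx)}(R/I, R/(\xx)) \cong \Ext^{p+n}_R(R/I, R)$ for all $p \ge 0$. Setting $p = 0$ and noting $\Hom_{R/(\xx)}(R/I, R/(\xx)) = ((\xx):_R I)/(\xx) = J/(\xx)$ gives the identification.

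Next I would bound $\qpd_R N$. The equalities $\grade I = n = \Gdim_R R/I$ force $\Ext^p_R(R/I, R) = 0$ for $p \ne n$. Fix a finite quasi-projective resolution $P$ of $R/I$ with $\sup P - \hsup P = n$ via Proposition \ref{14}, and apply the Ext spectral sequence of Lemma \ref{spseq} to $\Hom_R(P, R)$: since $\H_q(P) \cong (R/I)^{\oplus a_q}$, the spectral sequence collapses onto the single row $p = n$, giving $\H^{n+q}(\Hom_R(P, R)) \cong N^{\oplus a_q}$. Reindexing exhibits $\Hom_R(P, R)$ as a finite quasi-projective resolution of $N$ with $\qpd_R N \le n$.

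Finally, I would exploit the short exact sequence $0 \to N \to R/(\xx) \to R/J \to 0$ to produce a finite quasi-projective resolution of $R/J$. Lifting the inclusion to a chain map $\tilde f \colon Q \to \K(\xx)$, from the quasi-projective resolution $Q$ of $N$ to the Koszul complex on $\xx$, yields a mapping cone $\Cone(\tilde f)$ that is a bounded complex of projectives; the long exact sequence of homologies shows $\H_0(\Cone(\tilde f)) \cong R/J$ and $\H_1(\Cone(\tilde f)) = 0$, while the higher homologies consist of copies of $N$. One then iteratively splices in shifted copies of $Q$ to cancel these spurious $N$-contributions, producing a bounded complex whose homology consists solely of copies of $R/J$ and whose $\sup - \hsup$ is at most $n$. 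Combining with the lower bound $\qpd_R R/J \ge \grade J$, which comes from Theorem \ref{AB} together with the classical inequality $\grade J \le \depth R - \depth R/J$, then gives $\qpd_R R/J = n = \grade J$. The principal obstacle is this last step: ensuring the iterative splicing terminates and produces a genuine finite quasi-projective resolution of $R/J$ whose homologies are exactly copies of $R/J$, with no residual occurrences of $N$.
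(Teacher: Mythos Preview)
Your first two steps are correct: the identification $J/(\xx)\cong\Ext^n_R(R/I,R)=:N$ is standard, and dualizing a quasi-projective resolution $P$ of $R/I$ (using that $\Ext^p_R(R/I,R)=0$ for $p\ne n$, which follows from $\grade I=n=\Gdim_R R/I$) does produce a finite quasi-projective resolution of $N$ with $\sup-\hsup\le n$.

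The gap is exactly where you flagged it. The mapping cone $\Cone(\tilde f)$ has $\H_0\cong R/J$ but $\H_i\cong N^{\oplus a_{i-1}}$ for $i\ge 2$, and your ``iterative splicing'' does not terminate: each time you map in a shifted copy of $Q$ to kill an occurrence of $N$ in degree $i$, you introduce new copies of $N$ in degrees $i+1,\dots,i+\hsup Q$. This is the same obstruction that prevents building a finite free resolution by naive splicing. There is no mechanism in your setup forcing the process to stabilize, and in general it will not, so you never obtain a bounded complex whose homologies are direct sums of copies of $R/J$ alone.

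The paper sidesteps this entirely by passing to $S=R/(\xx)$. One first checks (and this requires a short argument with Koszul homology) that $I/(\xx)$ remains FKH over $S$; then Theorem~\ref{main}(1) gives $\qpd_S R/I=0$, and together with $\Gdim_S R/I<\infty$ and Corollary~\ref{17} this makes $R/I$ totally reflexive over $S$. The linkage relation over $S$ identifies $R/J\cong\syz_S^{-1}\Hom_S(R/I,S)$, and now Proposition~\ref{mstar} handles precisely this situation: duals and cosyzygies of a totally reflexive module of finite quasi-projective dimension again have finite quasi-projective dimension. This gives $\qpd_S R/J=0$, and Proposition~\ref{7}(3) lifts the bound back to $\qpd_R R/J\le g$. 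The reduction to $S$ is what lets you replace the hopeless splicing argument by an appeal to the already-established machinery for totally reflexive modules.
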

\begin{proof} Suppose $I$ is linked to $J$. Then by definition, there is a regular sequence $\xx=x_1,\dots,x_g \in I\cap J$ such that $((\xx):_RI)=J$ and $((\xx):_RJ)=I$. Note that since $I$ and $J$ strictly contain $(\xx)$, we have $\grade_R I=\grade_R J=g$.
First, we show that $I/(\xx)$ is an FKH ideal of $R/(\xx)$. By induction, it is enough to show that if $x\in I$ is a regular element, then $I/(x)$ is an FKH ideal of $R/(x)$. Let $\overline{\K(I)}=\K(I)\otimes_RR/(x)$. If $x\in I\backslash \m I$ then Lemma \ref{khom} implies that $I/(x)$ is an FKH ideal over $R/(x)$. Assume $x\in \m I$.
The exact sequence $0 \to \K(I) \xrightarrow{x} \K(I) \to \overline{\K(I)} \to 0$ of complexes gives a long exact sequence of homologies
$$\dots \to \H_i(I) \xrightarrow{x} \H_i(I) \to \H_i(\overline{\K(I)}) \to \H_{i-1}(I) \xrightarrow{x} \H_{i-1}(I) \to \dots.$$
Since $x\H_i(I)=0$ the last long exact sequence breaks into short exact sequences
$0\to \H_i(I) \to \H_i(\overline{\K(I)}) \to \H_{i-1}(I) \to 0$ for all $i\geq 0$. Since $\overline{\K(I)}$ is a Koszul complex over $R/(x)$ with respect to a sequence of minimal generators of $I/(x)$, one has $\H_i(\overline{\K(I)}) $ is an $R/I$-module for all $i$. Therefore the last short exact sequences splits, and so $\H_i(\overline{\K(I)}) $ is a free $R/I$-module for all $i$. Thus $I/(x)$ is an FKH ideal of $R/(x)$.

Set $S=R/(\xx)$. Since  $\Gdim_RR/I<\infty$ and $\xx$ is a regular sequence, we have $\Gdim_SR/I<\infty$.
By Theorem \ref{main}(1), $\qpd_SR/I=\grade_SI/(\xx)=0$. Therefore one has $\Ext^{i>0}_S(R/I,S)=0$ by Corollary \ref{17}, and so $R/I$ is a totally reflexive $S$-module. 
Since $R/J \cong \Omega^{-1}(\Hom_S(R/I,S))$, it follows from  Proposition \ref{mstar} and the Auslander--Buchsbaum formula \ref{AB}  that $\qpd_SR/J = 0$.  Now, Proposition \ref{7}(3) says that $\qpd_RR/J\leq g=\grade_RJ$. This finishes the proof.
\end{proof}

\begin{defn}
A proper ideal $I$ of $R$ is called \emph{quasi-Gorenstein} if
$$
\Ext^i_R(R/I,R)\cong \left\lbrace
           \begin{array}{c l}
              R/I\ \ & \text{ \ \ $i=\grade I$,}\\
              0\ \   & \text{   \ \ $\textrm{otherwise}$.}
           \end{array}
        \right.
$$
\end{defn}

Let $R$ be a local ring. We denote by $\nu(M)$ the minimal number of generators of an $R$-module $M$.

\begin{prop}\label{gdim}
Let $I$ be an FKH ideal of a local ring $R$.
If $\Ext^{\gg 0}_R(I,R)=0$, then $I$ is a quasi-Gorenstein ideal.
Moreover, for all $i\geq 0$ one has $\H_i(I)\cong \H_{n-g-i}(I)$, where $n=\nu(I)$ and $g=\grade I$.
\end{prop}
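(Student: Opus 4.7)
Let $\xx=x_1,\dots,x_n$ be a minimal generating sequence of $I$ and write $\K=\K(\xx)$. Since $I$ is FKH, $\H_q(\K)\cong(R/I)^{\oplus a_q}$ for each $q$, where $a_q:=\mathrm{rank}_{R/I}\H_q(I)$; note $a_0=1$, $a_q=0$ outside $0\le q\le n-g$, and $a_{n-g}\ge 1$. Set $\omega=\Ext^g_R(R/I,R)$.

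My first step is to upgrade the hypothesis to a statement about $R/I$ and control $\Gdim_R(R/I)$. From the short exact sequence $0\to I\to R\to R/I\to 0$ we get $\Ext^{i+1}_R(R/I,R)\cong\Ext^i_R(I,R)$ for $i\ge 1$, hence $\Ext^{\gg 0}_R(R/I,R)=0$. By Theorem \ref{main}(1), $\qpd_R(R/I)=g<\infty$, so Proposition \ref{mstar} yields $\Gdim_R(R/I)<\infty$. The Auslander--Bridger formula combined with Theorem \ref{AB} then gives $\Gdim_R(R/I)=\depth R-\depth R/I=\qpd_R(R/I)=g$, and in particular $\Ext^i_R(R/I,R)=0$ for $i>g$. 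Together with $\Ext^i_R(R/I,R)=0$ for $i<g$ (since $\grade I=g$), we obtain $\Ext^i_R(R/I,R)=0$ for all $i\ne g$.

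Next I will feed this into the cohomology spectral sequence of Lemma \ref{spseq} applied to the quasi-projective resolution $\K$ of $R/I$ with $N=R$:
$$
\E_2^{p,q}=\Ext^p_R(\H_q(\K),R)=\Ext^p_R(R/I,R)^{\oplus a_q}\Longrightarrow\H^{p+q}(\Hom_R(\K,R)).
$$
By the previous paragraph only the row $p=g$ survives, so the spectral sequence collapses at $\E_2$ and
$$
\H^i(\Hom_R(\K,R))\cong\omega^{\oplus a_{i-g}}\qquad\text{for every }i.
$$
On the other hand, the classical self-duality of the Koszul complex gives an isomorphism of complexes $\Hom_R(\K,R)\cong\K[-n]$, whence $\H^i(\Hom_R(\K,R))\cong\H_{n-i}(\K)\cong(R/I)^{\oplus a_{n-i}}$.

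Comparing the two computations yields, for each $g\le i\le n$, an isomorphism
$$
\omega^{\oplus a_{i-g}}\cong (R/I)^{\oplus a_{n-i}}.
$$
Taking $i=g$ (so $a_0=1$) shows $\omega\cong(R/I)^{\oplus a_{n-g}}$; substituting this back and comparing ranks after tensoring with the residue field (which cancels free $R/I$-summands) gives $a_{n-g}\cdot a_{i-g}=a_{n-i}$ for all relevant $i$. Setting $i=n$ yields $a_{n-g}^2=a_0=1$, so $a_{n-g}=1$ and $\omega\cong R/I$, proving that $I$ is quasi-Gorenstein. Finally, the relation $a_{n-g}\cdot a_{i-g}=a_{n-i}$ reduces to $a_{i-g}=a_{n-i}$, i.e.\ $a_j=a_{n-g-j}$ for $0\le j\le n-g$, which is exactly the claimed symmetry $\H_j(I)\cong\H_{n-g-j}(I)$.

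The main obstacle I anticipate is the bookkeeping around the Koszul self-duality $\Hom_R(\K,R)\cong\K[-n]$ and the fact that the $\E_2$-page collapsing to a single row really produces an honest isomorphism (rather than merely an associated graded), but both are standard once one writes down the signs carefully.
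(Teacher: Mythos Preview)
Your argument is correct and shares the same spectral-sequence backbone as the paper's proof, but the two diverge at both ends. For the vanishing $\Ext^i_R(R/I,R)=0$ when $i\neq g$, the paper goes directly through Corollary~\ref{17} (since $\qpl_R(R/I)\le n$), whereas you detour through Proposition~\ref{mstar} and the Auslander--Bridger formula; both work, but the paper's route is shorter. The more interesting difference is in proving $\omega\cong R/I$: the paper reduces to the case $g=0$ via Lemma~\ref{khom}, sets $J=(0:_RI)$, and runs a reflexivity argument on $J$ to extract the equation $m=m^3$. You instead keep the full collapsed spectral sequence in play, compare it against Koszul self-duality $\Hom_R(\K,R)\cong\K[-n]$, and read off the rank identity $a_{n-g}\cdot a_{i-g}=a_{n-i}$, which at $i=n$ gives $a_{n-g}^2=1$. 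Your method is more self-contained and has the bonus of yielding the homology symmetry $\H_j(I)\cong\H_{n-g-j}(I)$ in the same breath, whereas in the paper this last part is only sketched (``follows from the spectral sequence''). The concern you flag about single-column collapse giving an honest isomorphism rather than merely an associated graded is harmless: when only one column of $\E_2$ is nonzero, the filtration on each abutment has a single step.
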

\begin{proof} By Lemma \ref{spseq} we have a spectral sequence
\begin{equation}\label{spext}
\E^{p,q}_2=\Ext^p_R(\H_q(I),M) \Longrightarrow \H^{p+q}(I,M),
\end{equation}
where $\H^i(I,M)=\H^i(\Hom_R(\K(I),M))$.
Theorem \ref{main}(1) and Corollary \ref{17} shows $\Ext^i_R(R/I,R)=0$ for all $i\neq g$, and the spectral sequence \eqref{spext} implies that $\Ext^g_R(R/I,R)\cong \H^g(I)\cong \H_{n-g}(I)$.
Therefore $\Ext^g_R(R/I,R)$ is a free $R/I$-module.
Thus $\Ext^g_R(R/I,R)\cong (R/I)^m$.
We only need to show $m=1$.
Without loss of generality, we may assume $g=0$; see Lemma \ref{khom}.
Let $J=(0:_RI)$.
Since $\Ext^1_R(R/I,R)=0$, we have $\Hom_R(I,R)\cong R/J$.
Also, $\Hom_R(R/J,R)\cong I$.
It follows that $\Hom_R(\Hom_R(R/J,R),R)\cong R/J$.
Now the commutative diagram
$$
\xymatrix@R-1pc{
0\ar[r]& J\ar[r]\ar[d]& R\ar[r]\ar[d]_\cong& R/J\ar[r]\ar[d]_\cong& 0\\
0\ar[r]& J^{\ast\ast}\ar[r]& R^{\ast\ast}\ar[r]& (R/J)^{\ast\ast}\ar[r]& 0
}
$$
implies that $J\cong J^{**}$. Since $J\cong \Hom_R(R/I,R)\cong(R/I)^m$, the isomorphism $J\cong J^{**}$ says that $m=m^3$ whence $m=1$.
The last part follows from the spectral sequence (\ref{spext}).
\end{proof}

The following result provides an affirmative answer to Question \ref{q2} in a special case.

\begin{cor}\label{sym}
Let $R$ be a local ring, and let $I$ be a FKH ideal of $R$.
Let $M$ be an $R$-module and assume $\Gdim_RI<\infty$.
Then $\Tor^R_{\gg0}(R/I,M)=0$ if and only if $\Ext^{\gg 0}_R(R/I,M)=0$.
\end{cor}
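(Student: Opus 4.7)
The plan is to reduce to a totally reflexive module via a syzygy and then to bridge Ext-vanishing and Tor-vanishing through the Auslander--Buchweitz duality $\cext^{-i-1}_R(-,M)\cong\ctor^R_i((-)^*,M)$ used in Theorem \ref{24}, together with Proposition \ref{Tate}.

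I would start by showing $\Gdim_R(R/I)=\grade I=:g$: the short exact sequence $0\to I\to R\to R/I\to 0$ gives $\Gdim_R(R/I)<\infty$, and combining the Auslander--Bridger formula with Theorem \ref{AB} and Theorem \ref{main}(1) pins down the value. Since $\Gdim_RI<\infty$ also yields $\Ext_R^{\gg0}(I,R)=0$, Proposition \ref{gdim} shows $I$ is quasi-Gorenstein. Let $N:=\syz^g_R(R/I)$, which is then totally reflexive, with $\qpd_RN<\infty$ by Proposition \ref{Syz}(4) and $\qpd_RN^*<\infty$ by Proposition \ref{mstar}.

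Next I would construct a short exact sequence $0\to K\to N^*\to R/I\to 0$ with $\pd_RK<\infty$. Dualizing a projective resolution $P_\bullet\to R/I$ term by term, each short exact sequence $0\to\syz^{i+1}(R/I)\to P_i\to\syz^i(R/I)\to 0$ produces a four-term exact sequence whose rightmost term is $\Ext_R^{i+1}(R/I,R)$; quasi-Gorensteinness kills this except when $i=g-1$, where it equals $R/I$. Splicing the sequences yields
\[
0\to P_0^*\to P_1^*\to\cdots\to P_{g-1}^*\to N^*\to R/I\to 0,
\]
and the desired short exact sequence comes out with $K:=\Im(P_{g-1}^*\to N^*)$, which has $\pd_RK\le g-1$.

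To finish, dimension shifting gives $\Tor^R_{\gg0}(R/I,M)=0\iff\Tor^R_{\gg0}(N,M)=0$ and $\Ext^{\gg0}_R(R/I,M)=0\iff\Ext^{\gg0}_R(N,M)=0$, while the above short exact sequence yields $\Tor^R_{\gg0}(N^*,M)=0\iff\Tor^R_{\gg0}(R/I,M)=0$. Using that Tate (co)homology agrees with ordinary (co)homology in positive degrees for totally reflexive modules, Proposition \ref{Tate} applied to both $N$ and $N^*$, and the Auslander--Buchweitz duality, I would then get
\begin{align*}
\Ext^{\gg0}_R(N,M)=0 &\iff\cext^i_R(N,M)=0\text{ for all }i\in\Z\\
&\iff\ctor^R_i(N^*,M)=0\text{ for all }i\in\Z\iff\Tor^R_{\gg0}(N^*,M)=0.
\end{align*}
Chaining these equivalences proves the corollary. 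The main obstacle I anticipate is the splicing construction in the middle step, which requires careful bookkeeping of the $\Ext$-cohomology pattern under quasi-Gorensteinness to ensure $\pd_RK<\infty$ and thereby enable the exchange of $N^*$ for $R/I$ in Tor-vanishing.
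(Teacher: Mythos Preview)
Your argument is correct and takes a genuinely different route from the paper's. The paper replaces $M$ by a high syzygy of $M$ (not of $R/I$), reduces to $\grade I=0$ via Lemma~\ref{khom}, and then uses the Koszul self-duality $\H_i(I)\cong\H_{n-i}(I)$ of Proposition~\ref{gdim} together with the spectral sequences \eqref{sptor} and \eqref{spext} to show directly that $\Hom_R(\H_i(I),M)\cong\H^i(I,M)$, which collapses the Ext spectral sequence and forces $\Ext^{>0}_R(R/I,M)=0$. You instead pass to a syzygy $N=\syz^g(R/I)$ of $R/I$, use the quasi-Gorenstein structure to manufacture $0\to K\to N^*\to R/I\to 0$ with $\pd_RK<\infty$, and then run the Tate machinery of Proposition~\ref{Tate} plus the duality $\cext^{-i-1}_R(N,M)\cong\ctor^R_i(N^*,M)$.

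What each buys: the paper's argument stays entirely within the Koszul/spectral-sequence toolkit of Section~7 and actually yields the sharper conclusion $\Ext^{>0}_R(R/I,M)=0$ after the reductions. Your approach is more structural and would apply verbatim to any module $C$ with $\qpd_RC<\infty$, $\Gdim_RC<\infty$, and $\Ext^i_R(C,R)\cong C^{\delta_{i,g}}$, not just to $R/I$ for an FKH ideal. One small point to watch: the paper invokes the $\cext/\ctor$ duality only in the Gorenstein setting (Theorem~\ref{24}, citing \cite[4.4.7]{AB2}); your use needs it over an arbitrary local ring. That is fine---for a totally reflexive $N$ with complete resolution $T$, the natural isomorphism $\Hom_R(T,-)\cong T^*\otimes_R-$ of complexes and the fact that $T^*$ is a complete resolution of $N^*$ give the duality with no Gorenstein hypothesis---but you should say so explicitly (or cite \cite{AM}) rather than point back to Theorem~\ref{24}.
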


\begin{proof}
Assume $\Tor^R_{\gg0}(R/I,M)=0$.
Consider the exact sequence $0\to \Omega M \to F \to M \to 0$, where $F$ is a free module.
Since $\Gdim I<\infty$, one has $\Ext^{\gg 0}_R(R/I,M)=0$ if and only if $\Ext^{\gg 0}_R(R/I,\Omega M)=0$.
Therefore, taking a higher syzygy, we may assume that $\Tor^R_{i>0}(R/I,M)=0$.
Also, note that by Lemma \ref{khom} we may assume $\grade I=0$.
Then by the spectral sequence \eqref{sptor} we get $\H_i(I,M)\cong \H_i(I)\otimes_R M$.
In particular, $(0:_MI)=\H_n(I,M)\cong (0:_RI)\otimes_RM$, where $n=\nu(I)$.
This shows $\Hom_R(R/I,M)\cong \Hom_R(R/I,R)\otimes_RM$.
Since $\H_i(I)$ is a free $R/I$-module for all $i\geq 0$, we get
$$
\Hom_R(\H_i(I),M)\cong \Hom_R(\H_i(I),R)\otimes_RM\cong \H_{n-i}(I)\otimes_RM\cong \H_{n-i}(I,M) \cong \H^{i}(I,M).
$$
The spectral sequence \eqref{spext} gives $\Hom_R(\H_q(I),M)=\E^{0,q}_2\cong \E^{0,q}_{\infty}$.
It follows that $\Ext^{>0}_R(R/I,M)=0$.
The converse follows by a similar argument.
\end{proof}

\end{document}